\documentclass[12pt,reqno]{amsart}
\usepackage{amssymb}
\usepackage{eucal}
\usepackage{amsmath}
\usepackage{amscd}
\usepackage[dvips]{color}
\usepackage{multicol}
\usepackage[all]{xy}           
\usepackage{graphicx}
\usepackage{color}
\usepackage{colordvi}
\usepackage{xspace}
\usepackage{tikz}

\usepackage{ifpdf}
\ifpdf
 \usepackage[colorlinks,final,backref=page,hyperindex]{hyperref}
\else
 \usepackage[colorlinks,final,backref=page,hyperindex,hypertex]{hyperref}
\fi




\topmargin -.8cm \textheight 22.8cm \oddsidemargin 0cm \evensidemargin -0cm \textwidth 16.3cm


\begin{document}
\newcommand {\emptycomment}[1]{} 

\newcommand{\nc}{\newcommand}
\newcommand{\delete}[1]{}
\nc{\mfootnote}[1]{\footnote{#1}} 
\nc{\todo}[1]{\tred{To do:} #1}

\nc{\mlabel}[1]{\label{#1}}  
\nc{\mcite}[1]{\cite{#1}}  
\nc{\mref}[1]{\ref{#1}}  
\nc{\meqref}[1]{\eqref{#1}} 
\nc{\mbibitem}[1]{\bibitem{#1}} 

\delete{
\nc{\mlabel}[1]{\label{#1}  
{\hfill \hspace{1cm}{\bf{{\ }\hfill(#1)}}}}
\nc{\mcite}[1]{\cite{#1}{{\bf{{\ }(#1)}}}}  
\nc{\mref}[1]{\ref{#1}{{\bf{{\ }(#1)}}}}  
\nc{\meqref}[1]{\eqref{#1}{{\bf{{\ }(#1)}}}} 
\nc{\mbibitem}[1]{\bibitem[\bf #1]{#1}} 
}

\newtheorem{thm}{Theorem}[section]
\newtheorem{lem}[thm]{Lemma}
\newtheorem{cor}[thm]{Corollary}
\newtheorem{pro}[thm]{Proposition}
\theoremstyle{definition}
\newtheorem{defi}[thm]{Definition}
\newtheorem{ex}[thm]{Example}
\newtheorem{rmk}[thm]{Remark}
\newtheorem{pdef}[thm]{Proposition-Definition}
\newtheorem{condition}[thm]{Condition}

\renewcommand{\labelenumi}{{\rm(\alph{enumi})}}
\renewcommand{\theenumi}{\alph{enumi}}
\renewcommand{\labelenumii}{{\rm(\roman{enumii})}}
\renewcommand{\theenumii}{\roman{enumii}}

\nc{\tred}[1]{\textcolor{red}{#1}}
\nc{\tblue}[1]{\textcolor{blue}{#1}}
\nc{\tgreen}[1]{\textcolor{green}{#1}}
\nc{\tpurple}[1]{\textcolor{purple}{#1}}
\nc{\btred}[1]{\textcolor{red}{\bf #1}}
\nc{\btblue}[1]{\textcolor{blue}{\bf #1}}
\nc{\btgreen}[1]{\textcolor{green}{\bf #1}}
\nc{\btpurple}[1]{\textcolor{purple}{\bf #1}}

\nc{\cmn}[1]{\textcolor{purple}{Chengming New:#1}}
\nc{\cmj}[1]{\textcolor{red}{Chengming 7/1:#1}}
\nc{\cm}[1]{\textcolor{red}{Chengming:#1}}
\nc{\li}[1]{\textcolor{blue}{#1}}
\nc{\lir}[1]{\textcolor{blue}{Li:#1}}


\nc{\twovec}[2]{\left(\begin{array}{c} #1 \\ #2\end{array} \right )}
\nc{\threevec}[3]{\left(\begin{array}{c} #1 \\ #2 \\ #3 \end{array}\right )}
\nc{\twomatrix}[4]{\left(\begin{array}{cc} #1 & #2\\ #3 & #4 \end{array} \right)}
\nc{\threematrix}[9]{{\left(\begin{matrix} #1 & #2 & #3\\ #4 & #5 & #6 \\ #7 & #8 & #9 \end{matrix} \right)}}
\nc{\twodet}[4]{\left|\begin{array}{cc} #1 & #2\\ #3 & #4 \end{array} \right|}

\nc{\rk}{\mathrm{r}}
\newcommand{\g}{\mathfrak g}
\newcommand{\h}{\mathfrak h}
\newcommand{\pf}{\noindent{$Proof$.}\ }
\newcommand{\frkg}{\mathfrak g}
\newcommand{\frkh}{\mathfrak h}
\newcommand{\Id}{\rm{Id}}
\newcommand{\gl}{\mathfrak {gl}}
\newcommand{\ad}{\mathrm{ad}}
\newcommand{\add}{\frka\frkd}
\newcommand{\frka}{\mathfrak a}
\newcommand{\frkb}{\mathfrak b}
\newcommand{\frkc}{\mathfrak c}
\newcommand{\frkd}{\mathfrak d}
\newcommand {\comment}[1]{{\marginpar{*}\scriptsize\textbf{Comments:} #1}}

\nc{\tforall}{\text{ for all }}

\nc{\sr}{\mathbf{r}}
\nc{\gen}{generalized\xspace}
\nc{\Iso}{\mathrm{Iso}}
\nc{\bk}{\bfk}
\nc{\ndhom}{\mathrm{NDHom}}
\nc{\ndt}{\mathrm{ND}}

\nc{\mR}{\alpha}

\nc{\tsymm}{extended symmetrizer\xspace}
\nc{\tsymms}{extended symmetrizers\xspace}

\nc{\inv}{symmetrized invariant\xspace}

\nc{\nybe}{nhacYBe\xspace}
\nc{\Nybe}{NhacYBe\xspace}

\nc{\strong}{strong\xspace}
\nc{\snybe}{\strong \nybe\xspace}

\nc{\rwith}{right twisted by\xspace}
\nc{\lwith}{left twisted by\xspace}

\nc{\svec}[2]{{\tiny\left(\begin{matrix}#1\\
#2\end{matrix}\right)\,}}  
\nc{\ssvec}[2]{{\tiny\left(\begin{matrix}#1\\
#2\end{matrix}\right)\,}} 

\nc{\typeI}{local cocycle $3$-Lie bialgebra\xspace}
\nc{\typeIs}{local cocycle $3$-Lie bialgebras\xspace}
\nc{\typeII}{double construction $3$-Lie bialgebra\xspace}
\nc{\typeIIs}{double construction $3$-Lie bialgebras\xspace}

\nc{\bia}{{$\mathcal{P}$-bimodule ${\bf k}$-algebra}\xspace}
\nc{\bias}{{$\mathcal{P}$-bimodule ${\bf k}$-algebras}\xspace}

\nc{\rmi}{{\mathrm{I}}}
\nc{\rmii}{{\mathrm{II}}}
\nc{\rmiii}{{\mathrm{III}}}
\nc{\pr}{{\mathrm{pr}}}
\newcommand{\huaA}{\mathcal{A}}

\nc{\OT}{constant $\theta$-}
\nc{\T}{$\theta$-}
\nc{\IT}{inverse $\theta$-}


\nc{\asi}{ASI\xspace}
\nc{\qadm}{$Q$-admissible\xspace}
\nc{\aybe}{AYBE\xspace}
\nc{\admset}{\{\pm x\}\cup K^{\times} x^{-1}}

\nc{\dualrep}{gives a dual representation\xspace}
\nc{\admt}{admissible to\xspace}

\nc{\opa}{\cdot_A}
\nc{\opb}{\cdot_B}

\nc{\post}{positive type\xspace}
\nc{\negt}{negative type\xspace}
\nc{\invt}{inverse type\xspace}

\nc{\pll}{\beta}
\nc{\plc}{\epsilon}

\nc{\ass}{{\mathit{Ass}}}
\nc{\lie}{{\mathit{Lie}}}
\nc{\comm}{{\mathit{Comm}}}
\nc{\dend}{{\mathit{Dend}}}
\nc{\zinb}{{\mathit{Zinb}}}
\nc{\tdend}{{\mathit{TDend}}}
\nc{\prelie}{{\mathit{preLie}}}
\nc{\postlie}{{\mathit{PostLie}}}
\nc{\quado}{{\mathit{Quad}}}
\nc{\octo}{{\mathit{Octo}}}
\nc{\ldend}{{\mathit{ldend}}}
\nc{\lquad}{{\mathit{LQuad}}}

 \nc{\adec}{\check{;}} \nc{\aop}{\alpha}
\nc{\dftimes}{\widetilde{\otimes}} \nc{\dfl}{\succ} \nc{\dfr}{\prec}
\nc{\dfc}{\circ} \nc{\dfb}{\bullet} \nc{\dft}{\star}
\nc{\dfcf}{{\mathbf k}} \nc{\apr}{\cdot} \nc{\spr}{\cdot}
\nc{\twopr}{\circ} \nc{\tspr}{\star} \nc{\sempr}{\ast }
\nc{\disp}[1]{\displaystyle{#1}}
\nc{\bin}[2]{ (_{\stackrel{\scs{#1}}{\scs{#2}}})}  
\nc{\binc}[2]{ \left (\!\! \begin{array}{c} \scs{#1}\\
    \scs{#2} \end{array}\!\! \right )}  
\nc{\bincc}[2]{  \left ( {\scs{#1} \atop
    \vspace{-.5cm}\scs{#2}} \right )}  
\nc{\sarray}[2]{\begin{array}{c}#1 \vspace{.1cm}\\ \hline
    \vspace{-.35cm} \\ #2 \end{array}}
\nc{\bs}{\bar{S}} \nc{\dcup}{\stackrel{\bullet}{\cup}}
\nc{\dbigcup}{\stackrel{\bullet}{\bigcup}} \nc{\etree}{\big |}
\nc{\la}{\longrightarrow} \nc{\fe}{\'{e}} \nc{\rar}{\rightarrow}
\nc{\dar}{\downarrow} \nc{\dap}[1]{\downarrow
\rlap{$\scriptstyle{#1}$}} \nc{\uap}[1]{\uparrow
\rlap{$\scriptstyle{#1}$}} \nc{\defeq}{\stackrel{\rm def}{=}}
\nc{\dis}[1]{\displaystyle{#1}} \nc{\dotcup}{\,
\displaystyle{\bigcup^\bullet}\ } \nc{\sdotcup}{\tiny{
\displaystyle{\bigcup^\bullet}\ }} \nc{\hcm}{\ \hat{,}\ }
\nc{\hcirc}{\hat{\circ}} \nc{\hts}{\hat{\shpr}}
\nc{\lts}{\stackrel{\leftarrow}{\shpr}}
\nc{\rts}{\stackrel{\rightarrow}{\shpr}} \nc{\lleft}{[}
\nc{\lright}{]} \nc{\uni}[1]{\tilde{#1}} \nc{\wor}[1]{\check{#1}}
\nc{\free}[1]{\bar{#1}} \nc{\den}[1]{\check{#1}} \nc{\lrpa}{\wr}
\nc{\curlyl}{\left \{ \begin{array}{c} {} \\ {} \end{array}
    \right .  \!\!\!\!\!\!\!}
\nc{\curlyr}{ \!\!\!\!\!\!\!
    \left . \begin{array}{c} {} \\ {} \end{array}
    \right \} }
\nc{\leaf}{\ell}       
\nc{\longmid}{\left | \begin{array}{c} {} \\ {} \end{array}
    \right . \!\!\!\!\!\!\!}
\nc{\ot}{\otimes} \nc{\sot}{{\scriptstyle{\ot}}}
\nc{\otm}{\overline{\ot}}
\nc{\ora}[1]{\stackrel{#1}{\rar}}
\nc{\ola}[1]{\stackrel{#1}{\la}}
\nc{\pltree}{\calt^\pl}
\nc{\epltree}{\calt^{\pl,\NC}}
\nc{\rbpltree}{\calt^r}
\nc{\scs}[1]{\scriptstyle{#1}} \nc{\mrm}[1]{{\rm #1}}
\nc{\dirlim}{\displaystyle{\lim_{\longrightarrow}}\,}
\nc{\invlim}{\displaystyle{\lim_{\longleftarrow}}\,}
\nc{\mvp}{\vspace{0.5cm}} \nc{\svp}{\vspace{2cm}}
\nc{\vp}{\vspace{8cm}} \nc{\proofbegin}{\noindent{\bf Proof: }}
\nc{\proofend}{$\blacksquare$ \vspace{0.5cm}}
\nc{\freerbpl}{{F^{\mathrm RBPL}}}
\nc{\sha}{{\mbox{\cyr X}}}  
\nc{\ncsha}{{\mbox{\cyr X}^{\mathrm NC}}} \nc{\ncshao}{{\mbox{\cyr
X}^{\mathrm NC,\,0}}}
\nc{\shpr}{\diamond}    
\nc{\shprm}{\overline{\diamond}}    
\nc{\shpro}{\diamond^0}    
\nc{\shprr}{\diamond^r}     
\nc{\shpra}{\overline{\diamond}^r}
\nc{\shpru}{\check{\diamond}} \nc{\catpr}{\diamond_l}
\nc{\rcatpr}{\diamond_r} \nc{\lapr}{\diamond_a}
\nc{\sqcupm}{\ot}
\nc{\lepr}{\diamond_e} \nc{\vep}{\varepsilon} \nc{\labs}{\mid\!}
\nc{\rabs}{\!\mid} \nc{\hsha}{\widehat{\sha}}
\nc{\lsha}{\stackrel{\leftarrow}{\sha}}
\nc{\rsha}{\stackrel{\rightarrow}{\sha}} \nc{\lc}{\lfloor}
\nc{\rc}{\rfloor}
\nc{\tpr}{\sqcup}
\nc{\nctpr}{\vee}
\nc{\plpr}{\star}
\nc{\rbplpr}{\bar{\plpr}}
\nc{\sqmon}[1]{\langle #1\rangle}
\nc{\forest}{\calf}
\nc{\altx}{\Lambda_X} \nc{\vecT}{\vec{T}} \nc{\onetree}{\bullet}
\nc{\Ao}{\check{A}}
\nc{\seta}{\underline{\Ao}}
\nc{\deltaa}{\overline{\delta}}
\nc{\trho}{\tilde{\rho}}

\nc{\rpr}{\circ}
\nc{\dpr}{{\tiny\diamond}}
\nc{\rprpm}{{\rpr}}

\nc{\mmbox}[1]{\mbox{\ #1\ }} \nc{\ann}{\mrm{ann}}
\nc{\Aut}{\mrm{Aut}} \nc{\can}{\mrm{can}}
\nc{\twoalg}{{two-sided algebra}\xspace}
\nc{\colim}{\mrm{colim}}
\nc{\Cont}{\mrm{Cont}} \nc{\rchar}{\mrm{char}}
\nc{\cok}{\mrm{coker}} \nc{\dtf}{{R-{\rm tf}}} \nc{\dtor}{{R-{\rm
tor}}}
\renewcommand{\det}{\mrm{det}}
\nc{\depth}{{\mrm d}}
\nc{\Div}{{\mrm Div}} \nc{\End}{\mrm{End}} \nc{\Ext}{\mrm{Ext}}
\nc{\Fil}{\mrm{Fil}} \nc{\Frob}{\mrm{Frob}} \nc{\Gal}{\mrm{Gal}}
\nc{\GL}{\mrm{GL}} \nc{\Hom}{\mrm{Hom}} \nc{\hsr}{\mrm{H}}
\nc{\hpol}{\mrm{HP}} \nc{\id}{\mrm{id}} \nc{\im}{\mrm{im}}
\nc{\incl}{\mrm{incl}} \nc{\length}{\mrm{length}}
\nc{\LR}{\mrm{LR}} \nc{\mchar}{\rm char} \nc{\NC}{\mrm{NC}}
\nc{\mpart}{\mrm{part}} \nc{\pl}{\mrm{PL}}
\nc{\ql}{{\QQ_\ell}} \nc{\qp}{{\QQ_p}}
\nc{\rank}{\mrm{rank}} \nc{\rba}{\rm{RBA }} \nc{\rbas}{\rm{RBAs }}
\nc{\rbpl}{\mrm{RBPL}}
\nc{\rbw}{\rm{RBW }} \nc{\rbws}{\rm{RBWs }} \nc{\rcot}{\mrm{cot}}
\nc{\rest}{\rm{controlled}\xspace}
\nc{\rdef}{\mrm{def}} \nc{\rdiv}{{\rm div}} \nc{\rtf}{{\rm tf}}
\nc{\rtor}{{\rm tor}} \nc{\res}{\mrm{res}} \nc{\SL}{\mrm{SL}}
\nc{\Spec}{\mrm{Spec}} \nc{\tor}{\mrm{tor}} \nc{\Tr}{\mrm{Tr}}
\nc{\mtr}{\mrm{sk}}

\nc{\ab}{\mathbf{Ab}} \nc{\Alg}{\mathbf{Alg}}
\nc{\Algo}{\mathbf{Alg}^0} \nc{\Bax}{\mathbf{Bax}}
\nc{\Baxo}{\mathbf{Bax}^0} \nc{\RB}{\mathbf{RB}}
\nc{\RBo}{\mathbf{RB}^0} \nc{\BRB}{\mathbf{RB}}
\nc{\Dend}{\mathbf{DD}} \nc{\bfk}{{\bf k}} \nc{\bfone}{{\bf 1}}
\nc{\base}[1]{{a_{#1}}} \nc{\detail}{\marginpar{\bf More detail}
    \noindent{\bf Need more detail!}
    \svp}
\nc{\Diff}{\mathbf{Diff}} \nc{\gap}{\marginpar{\bf
Incomplete}\noindent{\bf Incomplete!!}
    \svp}
\nc{\FMod}{\mathbf{FMod}} \nc{\mset}{\mathbf{MSet}}
\nc{\rb}{\mathrm{RB}} \nc{\Int}{\mathbf{Int}}
\nc{\Mon}{\mathbf{Mon}}
\nc{\remarks}{\noindent{\bf Remarks: }}
\nc{\OS}{\mathbf{OS}} 
\nc{\Rep}{\mathbf{Rep}}
\nc{\Rings}{\mathbf{Rings}} \nc{\Sets}{\mathbf{Sets}}
\nc{\DT}{\mathbf{DT}}

\nc{\BA}{{\mathbb A}} \nc{\CC}{{\mathbb C}} \nc{\DD}{{\mathbb D}}
\nc{\EE}{{\mathbb E}} \nc{\FF}{{\mathbb F}} \nc{\GG}{{\mathbb G}}
\nc{\HH}{{\mathbb H}} \nc{\LL}{{\mathbb L}} \nc{\NN}{{\mathbb N}}
\nc{\QQ}{{\mathbb Q}} \nc{\RR}{{\mathbb R}} \nc{\BS}{{\mathbb{S}}} \nc{\TT}{{\mathbb T}}
\nc{\VV}{{\mathbb V}} \nc{\ZZ}{{\mathbb Z}}


\nc{\calao}{{\mathcal A}} \nc{\cala}{{\mathcal A}}
\nc{\calc}{{\mathcal C}} \nc{\cald}{{\mathcal D}}
\nc{\cale}{{\mathcal E}} \nc{\calf}{{\mathcal F}}
\nc{\calfr}{{{\mathcal F}^{\,r}}} \nc{\calfo}{{\mathcal F}^0}
\nc{\calfro}{{\mathcal F}^{\,r,0}} \nc{\oF}{\overline{F}}
\nc{\calg}{{\mathcal G}} \nc{\calh}{{\mathcal H}}
\nc{\cali}{{\mathcal I}} \nc{\calj}{{\mathcal J}}
\nc{\call}{{\mathcal L}} \nc{\calm}{{\mathcal M}}
\nc{\caln}{{\mathcal N}} \nc{\calo}{{\mathcal O}}
\nc{\calp}{{\mathcal P}} \nc{\calq}{{\mathcal Q}} \nc{\calr}{{\mathcal R}}
\nc{\calt}{{\mathcal T}} \nc{\caltr}{{\mathcal T}^{\,r}}
\nc{\calu}{{\mathcal U}} \nc{\calv}{{\mathcal V}}
\nc{\calw}{{\mathcal W}} \nc{\calx}{{\mathcal X}}
\nc{\CA}{\mathcal{A}}

\nc{\fraka}{{\mathfrak a}} \nc{\frakB}{{\mathfrak B}}
\nc{\frakb}{{\mathfrak b}} \nc{\frakd}{{\mathfrak d}}
\nc{\oD}{\overline{D}}
\nc{\frakF}{{\mathfrak F}} \nc{\frakg}{{\mathfrak g}}
\nc{\frakm}{{\mathfrak m}} \nc{\frakM}{{\mathfrak M}}
\nc{\frakMo}{{\mathfrak M}^0} \nc{\frakp}{{\mathfrak p}}
\nc{\frakS}{{\mathfrak S}} \nc{\frakSo}{{\mathfrak S}^0}
\nc{\fraks}{{\mathfrak s}} \nc{\os}{\overline{\fraks}}
\nc{\frakT}{{\mathfrak T}}
\nc{\oT}{\overline{T}}
\nc{\frakX}{{\mathfrak X}} \nc{\frakXo}{{\mathfrak X}^0}
\nc{\frakx}{{\mathbf x}}
\nc{\frakTx}{\frakT}      
\nc{\frakTa}{\frakT^a}        
\nc{\frakTxo}{\frakTx^0}   
\nc{\caltao}{\calt^{a,0}}   
\nc{\ox}{\overline{\frakx}} \nc{\fraky}{{\mathfrak y}}
\nc{\frakz}{{\mathfrak z}} \nc{\oX}{\overline{X}}

\font\cyr=wncyr10

\nc{\al}{\alpha}
\nc{\lam}{\lambda}
\nc{\lr}{\longrightarrow}
\nc{\loc}[2]{#1_{(#2)}}
\nc{\vap}{\varepsilon} \nc{\str}{strong\xspace}

\nc{\xing}[1]{\textcolor{purple}{Xing:\, #1}}
\nc{\yi}[1]{\textcolor{cyan}{Yi: #1}}
\nc{\bai}[1]{\textcolor{purple}{Bai:#1}}
\nc{\revise}[1]{\textcolor{red}{#1}}

\title[Operator forms of the nonhomogeneous associative Yang-Baxter
equation] {Operator forms of the nonhomogeneous associative classical
Yang-Baxter equation}

\author{Chengming Bai}
\address{Chern Institute of Mathematics \& LPMC, Nankai University, Tianjin 300071, China}
         \email{baicm@nankai.edu.cn}

\author{Xing Gao}
\address{School of Mathematics and Statistics,
    Lanzhou University, Lanzhou, Gansu 730000, China}
         \email{gaoxing@lzu.edu.cn}

\author{Li Guo}
\address{Department of Mathematics and Computer Science, Rutgers University, Newark, NJ 07102, United States}
         \email{liguo@rutgers.edu}

\author{Yi Zhang}
\address{School of Mathematics and Statistics, Lanzhou University, Lanzhou, Gansu 730000, China}
         \email{zhangy2016@lzu.edu.cn}

\date{\today}

\begin{abstract}
This paper studies operator forms of the nonhomogeneous
associative classical Yang-Baxter equation (\nybe), extending and
generalizing such studies for the classical Yang-Baxter equation
and associative Yang-Baxter equation that can be tracked back to
the works of Semonov-Tian-Shansky and Kupershmidt on Rota-Baxter
Lie algebras and $\calo$-operators. Solutions of the
\nybe are characterized in terms of generalized $\calo$-operators, and in terms of the classical $\mathcal
O$-operators precisely when the solutions satisfy an invariant
condition.  When the invariant condition is compatible with a
Frobenius algebra, such solutions have a close relationship with
Rota-Baxter operators on the Frobenius algebra. In
general, solutions of the \nybe can be produced from Rota-Baxter
operators, and then from $\calo$-operators when the solutions are
taken in semi-direct product algebras. In the other direction,
Rota-Baxter operators can be obtained from solutions of the \nybe
in unitizations of algebras. Finally a classification is obtained for
solutions of the \nybe satisfying the mentioned invariant
condition in all unital complex algebras of dimensions two and
three. All these solutions are shown to come from
Rota-Baxter operators.
\end{abstract}

\subjclass[2010]{16T25,
17B38,      
16W99,
17A30,
57R56   
}

\keywords{ associative Yang-Baxter equation; classical Yang-Baxter
equation; $\mathcal{O}$-operator; Rota-Baxter operator;
dendriform algebra}

\maketitle

\vspace{-1.2cm}

\tableofcontents

\vspace{-1cm}

\allowdisplaybreaks

\section{Introduction}

The aim of this paper is to give operator forms of the
nonhomogeneous associative classical Yang-Baxter
equation in terms of Rota-Baxter operators and the more general
$\calo$-operators.

\subsection{CYBE, \aybe and their operator forms}

The classical Yang-Baxter equation (CYBE) was first given in the following tensor form
\begin{equation}\notag
[r_{12},r_{13}]+[r_{12},r_{23}]+[r_{13},r_{23}]=0,
\mlabel{eq:cybe}
\end{equation}
where $r\in \g\otimes \g$ and $\g$ is a Lie algebra (see \cite{CP} for details).
The CYBE arose from the study of
inverse scattering theory in 1980s. Later it was recognized as the
``semi-classical limit" of the quantum Yang-Baxter equation which
was encountered by C.~N. Yang in the computation of the
eigenfunctions of a one-dimensional fermion gas with delta
function interactions~\mcite{Ya} and by
R.~J. Baxter in the solution of the eight vertex model in
statistical mechanics~\mcite{BaR}.
The study of the CYBE is also related to classical integrable
systems and quantum groups (see~\cite{CP} and the references
therein).

An important approach in the study of the CYBE was through the
interpretation of its tensor form in various operator forms which
proved to be effective in providing solutions of the CYBE, in
addition to the well-known work of Belavin and
Drinfeld~\mcite{BD}. First Semonov-Tian-Shansky~\mcite{STS} showed
that if there exists a nondegenerate symmetric invariant bilinear
form on a Lie algebra $\g$ and if a solution $r$ of
the CYBE is skew-symmetric, then $r$ can be equivalently expressed
as a linear operator $R:\g\to \g$ satisfying the operator identity
\begin{equation}\label{eq:CYBE-RB}
[R(x), R(y)]=R([R(x),y])+R([x,R(y)]),\;\;\forall x,y\in
\g,\end{equation}
which is then regarded as an {\bf operator form} of the CYBE.
Note that Eq.~(\ref{eq:CYBE-RB}) is exactly the
Rota-Baxter relation (of weight zero) in Eq.~\meqref{eq:rbo} for Lie algebras.

In order for the approach to work more generally, Kupershmidt revisited operator forms of the CYBE
in~\cite{Ku} and noted that, when $r$ is skew-symmetric, the
tensor form of the CYBE is equivalent to a linear map $r:
\g^\ast \rightarrow \g$ satisfying
$$[r(x),r(y)]=r({\rm ad}^\ast  r(x)(y)-{\rm
ad}^\ast r(y)(x)),\;\forall x,y\in \g^\ast,$$
where $\g^\ast $ is the dual
space of $\g$ and ${\rm ad}^\ast $ is the dual representation of
the adjoint representation (coadjoint representation) of the Lie
algebra $\g$. He further generalized the above ${\rm
ad}^\ast $ to an arbitrary representation $\rho:\g\rightarrow
\gl(V)$ of $\g$, that is, a linear map $T:V\rightarrow \g$,
satisfying
$$[T(u), T(v)]=T(\rho(T(u))v-\rho(T(v))u),\quad \forall u,v\in V,$$
which was regarded as a natural generalization of the CYBE. Such
an operator is called an {\bf ${\mathcal O}$-operator} associated
to $\rho$. Note that the operator form (\ref{eq:CYBE-RB}) of
the CYBE given by Semonov-Tian-Shansky is just an ${\mathcal
O}$-operator associated to the adjoint representation of $\g$.

Going in the other direction, any $\mathcal O$-operator gives a
skew-symmetric solution of the CYBE in a semi-direct product Lie
algebra, completing the cycle from
the tensor form to the operator form and back to the tensor form
of the CYBE. Moreover, there is a closely related algebraic
structure called the pre-Lie algebra. Any $\mathcal O$-operator
gives a pre-Lie algebra and conversely, any pre-Lie algebra
naturally gives an $\mathcal O$-operator of the commutator Lie
algebra, and hence naturally gives rise to a solution of the CYBE~\cite{Bai2}.

An analogue of the CYBE for associative algebras is the
{\bf associative Yang-Baxter equation (AYBE)}~\mcite{Ag2}:
\begin{equation}\notag
r_{12}r_{13}+r_{13}r_{23}-r_{23}r_{12}=0,
\end{equation}
for $r\in A\ot A$ where $A$ is an associative algebra (see
Definition~\mref{de:nybe} for details). Its form with spectral
parameters was given in~\mcite{Po} in connection with the CYBE and
the quantum Yang-Baxter equation. The \aybe arose from the study
of the (antisymmetric) infinitesimal bialgebras, a notion traced
back to Joni and Rota in order to provide an algebraic framework
for the calculus of divided differences~\mcite{HR, JR} and, in the
antisymmetric case, carrying the same structures under the names
of ``associative D-bialgebra" in~\mcite{Zh} and ``balanced
infinitesimal bialgebra" in the sense of the opposite algebra in
\cite{Ag2}. The AYBEs have found applications in various fields in
mathematics and mathematical physics such as Poisson
geometry, integrable systems, quantum groups, and mirror
symmetry~\mcite{ARR,KZ,LP,ORS,ORS2,OS,Sc,Sc2,vd}. 

Motivated by the operator approach to the CYBE and the Rota-Baxter
operators with weights, $\calo$-operators with weights were
introduced to give an operator approach to the AYBE~\cite{BGN1},
while a method of obtaining Rota-Baxter operators from solutions
of the (opposite) \aybe was obtained in~\mcite{Ag1}. Briefly
speaking, under the skew-symmetric condition, a solution of the
AYBE is an $\mathcal O$-operator associated to the dual
representation of the adjoint representation, while an $\mathcal
O$-operator gives a skew-symmetric solution of the AYBE in a
semi-direct product associative algebra. Furthermore, the role played by pre-Lie algebras in CYBE is similarly played by dendriform algebras introduced by Loday~\mcite{Lo}, that is, any $\mathcal O$-operator induces a dendriform algebra
structure on the representation space and conversely, a dendriform
algebra gives a natural $\mathcal O$-operator and hence there is a
construction of (skew-symmetric) solutions of the AYBE from
dendriform algebras~\mcite{BGN0,BGN2}. Moreover,  such relationships are generalized to connect the solutions of the AYBE satisfying certain ``invariant" conditions and $\mathcal O$-operators with weights~\cite{BGN0,BGN1}.

In turn, these studies of the \aybe by $\calo$-operators with weights led to the introduction of similar $\calo$-operators on Lie algebras. These generalizations have found fruitful applications to the CYBE and further to Lax pairs, Lie bialgebras, and PostLie algebras~\mcite{BGNp1,BGNp2}.

\subsection{Nonhomogeneous AYBE and its operator form}
The notion of a {\bf non-homo-geneous associative classical
Yang-Baxter equation (nhacYBe)}~\mcite{OP} is the equation (detailed in Definition~\mref{de:nybe})
\begin{equation}\mlabel{eq:nhybe0}
r_{12}r_{13}+r_{13}r_{23}-r_{23}r_{12}=\mu r_{13},
\end{equation}
where $\mu$ is a fixed constant.
Its opposite form, given in
Eq.~\meqref{eq:nhybe-opp}, was called the {\bf associative
classical Yang-Baxter equation of  $\mu$} in~\mcite{EF}. Taking
$\mu=0$ recovers the \aybe.

The \nybe arose from the study of the quantum Yang-Baxter equation and Bezout operators.
Another motivation for introducing the \nybe is the
{\bf $\mu$-infinitesimal bialgebras}, that is, a triple
$(A,\apr,\Delta)$ consisting of an algebra $(A,\apr)$ and a
coalgebra $(A,\Delta)$ satisfying the compatibility
condition
\begin{equation}\mlabel{eq:ibmu}
\Delta(x\apr y)=(L(x)\otimes \id)\Delta(y)+\Delta(x)(\id\otimes
R(y))-\mu x\otimes y,\;\;\forall x,y\in A,
\end{equation}
where $L(x),R(x)$ are left and right multiplication operators of
$(A,\apr)$ respectively.
 When $\mu=1$, it was also called a {\bf
unital infinitesimal bialgebra}~\mcite{LR06} and appeared in
several topics such as combinatorics, operads and pre-Lie
algebras~\mcite{Foi09,Foi10, ZCGL, ZGL}. A solution of the
opposite form of the \nybe in a unital algebra gives a
$\mu$-infinitesimal bialgebra \mcite{EF,OP}. 

Note that while the AYBE has its origin from the CYBE for Lie
algebras, when $\mu \ne 0$, the \nybe does not have a counterpart for Lie algebras since the term $r_{13}$ on the right hand side of Eq.~\meqref{eq:nhybe0} does not make sense for a Lie algebra.

As in the cases of the CYBE and the AYBE, it is
important to study the \nybe  through its operator forms, to give further
understanding on the nature of the equation, and to provide
constructions of its solutions. To address the challenge from the nonhomogeneity, $\calo$-operators and Rota-Baxter operators are generalized and new approaches are introduced (see Remark~\ref{rmk:diff}).

\subsection{Outline of the paper}
We next provide some details of our operator approach of the \nybe which also serve as an outline of the paper.

In Section~\mref{sec:oop}, we first generalize the notion of an
$\calo$-operator whose weight is a scalar to one whose weight is a
binary operation. We then interpret solutions of the
\nybe equivalently in terms of generalized
$\calo$-operators (Theorem~\mref{pro:iff}) and, in the presence of
a symmetric Frobenius algebra, in terms of generalized Rota-Baxter
algebras (Theorem~\mref{cor:RBsystem}). On Frobenius
algebras, such an interpretation also gives a correspondence
between solutions of the \aybe and Rota-Baxter systems introduced
in \mcite{Br}, rather than Rota-Baxter operators by themselves
(Corollary~\mref{co:rbs}). In order to make a connection with the
existing notion of $\calo$-operators and Rota-Baxter operators, we
explore the additional conditions for solutions of the \nybe. As
it turns out, a solution $r$ of the \nybe can be interpreted in
terms of an $\mathcal O$-operator precisely when it
satisfies the {\bf \inv} condition that the \tsymm
\begin{equation}
\mlabel{eq:inv0}
\sr:=r+\sigma(r)-\mu ({\bf 1}\otimes {\bf 1})
\end{equation}
of  $r$ is invariant, where $\sigma$ is the flip map
(Theorem~\mref{thm:key}). Note that the parameter $\mu$ appears in
both the \nybe and the invariant condition, especially as the
scalar multiple of ${\bf 1}\otimes {\bf 1}$ for the latter. As a
special case, the \tsymm of a solution $r$ is zero means
that $(r,-\sigma(r))$ is an associative Yang-Baxter pair in the
sense of \mcite{Br} (Corollary~\mref{co:ybp}).

In Section~\mref{sec:rbo}, we present a close relationship between
the \nybe and Rota-Baxter operators including but exceeding the
known relationships between the skew-symmetric solutions of the
\aybe and
Rota-Baxter operators of weight zero on Frobenius algebras given
in \mcite{BGN1}. In unital symmetric Frobenius algebras, when the
\tsymm is a multiple of the nondegenerate invariant tensor
corresponding to the nondegenerate bilinear form defining the
Frobenius algebra structure, that is, the \tsymm is a
nondegenerate invariant tensor or zero, there is a
characterization of the solutions of the \nybe by Rota-Baxter
operators (Theorem~\mref{cor:Frobenius}). Taking the matrix
algebras gives the correspondence in \mcite{OP} and taking the
trivial \tsymm and $\mu=0$ yields the correspondence in
\mcite{BGN1}. When the \tsymm is degenerate, in one direction,
there is a construction of solutions of the \nybe from Rota-Baxter
operators satisfying its own invariant conditions
(Proposition~\mref{cor:conRB}). Based on such a construction, we
obtain \inv solutions of the \nybe for $\mu \ne 0$ in semi-direct
product algebras from $\mathcal O$-operators of weight zero as
well as from dendriform algebras of Loday~\mcite{Lo}. Note that
these constructions are different from the construction of
solutions of the \aybe from $\mathcal O$-operators given in
\mcite{BGN1} due to the appearance of the new term $\mu({\bf
1}\otimes {\bf 1})$ in Eq.~\eqref{eq:inv0} (Remark~\ref{rmk:diff}). In the other direction, Rota-Baxter
operators can also be obtained from solutions of the \nybe in an
augmented algebra, that is, the unitization of an associative algebra
(Theorem~\mref{thm:nhybrb} and Corollary~\ref{cor:YBE-RB}).

In Section~\mref{sec:exam}, we
give the classification of the \inv solutions of the \nybe for $\mu \ne 0$ in the
unital complex algebras in dimensions two and three.
These examples indicate that the \inv solutions of the \nybe only comprise a small
part of all solutions of the \nybe. Moreover, we also find that all \inv solutions of the \nybe for $\mu \ne 0$ in the unital complex algebras in dimensions two and three are obtained from Rota-Baxter operators.

\smallskip

\noindent
{\bf Notations.} Throughout this paper, we fix a base field
$\bfk$. Unless otherwise specified, all the vector spaces and
algebras are finite dimensional, although some results and notions
remain valid in the infinite-dimensional case. By a
$\bfk$-algebra, we mean an associative algebra over $\bfk$
not necessarily having a unit.

\section{Characterizations of \nybe by generalized $\calo$-operators}
\mlabel{sec:oop} We first recall some basic definitions and facts
that will be used in this paper. We introduce the notion of
generalized $\mathcal O$-operators whose weight is a binary
operation, especially when the binary operations are obtained from
$A$-bimodule $\bfk$-algebras, we recover the notion of $\mathcal
O$-operators of weight $\lambda$. Then we give a general
interpretation of the \nybe in terms of generalized $\mathcal O$-operators, including a correspondence between solutions of the
\nybe with $\mu=0$, that is, the \aybe,
and Rota-Baxter systems~\mcite{Br} on Frobenius algebras. Finally
under the additional invariant condition, this interpretation
gives a correspondence between \inv solutions of the \nybe and
$\mathcal O$-operators with weight $\lambda$.

\subsection{$\mathcal O$-operators and Rota-Baxter operators for bimodules}

We generalize the notions of $\calo$-operators and Rota-Baxter operators from those with scalar weights to the ones with weights given by binary operations. We first briefly recall some background and refer the reader to~\mcite{Bai1,BGN1} for further details.

Let $(A,\apr)$ be a $\bfk$-algebra.
An {\bf $A$-bimodule} is a $\bfk$-module $V$, together with linear
maps $\ell, r:A\rightarrow \End_\bfk(V)$ satisfying
\begin{equation}
\ell(x\apr y)v=\ell(x)(\ell(y)v),vr(x\apr y)=(vr(x))r(y),
(\ell(x)v)r(y)=\ell(x)(vr(y)),\; \forall\; x,y\in A, v\in V.
\notag
\end{equation}
If we want to be more precise, we also denote an $A$-bimodule $V$ by
the triple $(V,\ell,r)$.

Given a $\bfk$-algebra $A=(A,\apr)$ and $x\in A$, define
\begin{align*}
L(x): A\rightarrow A,\, L(x)y=xy; \quad R(x):A \rightarrow A,\, yR(x)=yx, \, \forall\, y\in A
\end{align*}
to be the left and right actions on $A$. We further define
\begin{align*}
L=L_A:A \rightarrow \End_\bfk (A), \, x\mapsto L(x); \quad R=R_A:A \rightarrow \End_\bfk (A), \, x\mapsto R(x),\, \forall\, x\in A.
\end{align*}
Clearly, $(A, L,R)$ is an $A$-bimodule, called the {\bf adjoint $A$-bimodule}.

There is a natural characterization of semi-direct product
extensions of a $\bfk$-algebra $(A,\apr)$ by an $A$-bimodule. Let
$\ell, r: A\to \End_{\bfk}(V)$ be linear maps. Define a
multiplication on $A\oplus V$ (still denoted by $\apr$) by
 \begin{equation}\notag
 (a+u)\apr(b+v):=a\apr b+(\ell(a)v+u r(b)),\quad \forall a, b\in A, u, v \in V.
 \mlabel{eq:2.3}
 \end{equation}
Then as is well-known, $A\oplus V$ is a $\bfk$-algebra, denoted by
$A\ltimes_{\ell,r} V$ and called the {\bf semi-direct product} of
$A$ by $V$, if and only if $(V,\ell,r)$ is an $A$-bimodule.

For a $\bfk$-module $V$ and its dual module $V^\ast :=\Hom_\bfk (V, \bfk)$, the usual pairing between them is given by
\begin{align*}
\langle, \rangle: V^\ast \times V\rightarrow \bfk, \, \langle u^\ast , v\rangle=u^\ast (v),\, \forall\,  u^\ast \in V^\ast , v\in V.
\end{align*}
Identifying $V$ with $(V^\ast )^\ast $, we also use $\langle v, u^\ast \rangle =\langle u^\ast , v\rangle$.

Let $A$ be a $\bfk$-algebra and let $(V,\ell,r)$ be an $A$-bimodule. Define linear maps
$\ell^\ast ,r^\ast :A\rightarrow \End_\bfk(V^\ast )$ by
\begin{equation}\notag
\langle u^\ast \ell^\ast (x),v\rangle=\langle u^\ast ,\ell(x)v\rangle,\; \langle
r^\ast (x)u^\ast ,v\rangle=\langle u^\ast ,vr(x)\rangle,\;\forall x\in A, u^\ast \in
V^\ast , v\in V, \mlabel{eq:dual}
\end{equation}
respectively. Then $(V^\ast ,r^\ast ,\ell^\ast )$ is also an $A$-bimodule, called the {\bf dual $A$-bimodule} of $(V,\ell,r)$.

To give an operator interpretation of solutions of the \nybe, we
generalize the notion of $\calo$-operators
with weights introduced in~\mcite{BGN1} by dropping the condition
that the multiplication $\circ$ on $R$ turns $(R,\circ,\ell,r)$
into an $A$-bimodule $\bfk$-algebra.

\begin{defi} \mlabel{de:goop}
    Let $(A,\apr)$ be a $\bfk$-algebra. Let $(R,\ell,r)$ be an $A$-bimodule and $\rpr$ a binary operation on $R$. A linear map $\aop:R\rightarrow A$ is called an {\bf $\calo$-operator of weight
        $\rpr$ associated to $(R,\ell,r)$} or simply a {\bf \gen $\calo$-operator} if $\alpha$
    satisfies
    \begin{equation}\notag
    \aop(u)\apr
    \aop(v)=\aop(\ell(\aop(u))v)+\aop(ur(\aop(v)))+\aop(u\rpr
    v),\;\;\forall u,v\in R. \mlabel{eq:aop}
    \end{equation}
    In particular, if $(R,\ell,r)=(A,L_A,R_A)$ is the adjoint $A$-bimodule and $\circ$ is a binary operation on $A$, then an $\mathcal O$-operator $\alpha:A\rightarrow A$ of
    weight $\rpr$ associated to the $A$-bimodule $(A,L_A,R_A)$ is called a {\bf
        Rota-Baxter operator of weight $\circ$}. In this case $\alpha$ satisfies
    \begin{equation}\notag
    \alpha(x)\apr \alpha(y)=\alpha(\alpha(x)\apr y)+\alpha(x\apr\alpha(y))+\alpha(
    x\circ y),\;\;\forall x,y\in A.
    \end{equation}
\end{defi}

\begin{ex}
In the definition of Rota-Baxter operators with weight $\rpr$, when $\rpr$ is given by $x\rpr y:=\lambda x\apr y$ for a given $\lambda\in \bfk$, we recover the usual {\bf Rota-Baxter operator of weight $\lambda$}, with its defining operator identity
\begin{equation}\mlabel{eq:rbo}
P(x)\apr P(y)=P(x\apr y)+P(P(x)\apr y)+\lambda P(x\apr y), \quad \forall x, y \in A.
\end{equation}
Here the notion is named after the mathematicians G.-C. Rota~\mcite{Rot} and G. Baxter~\mcite{Bax} for their early work motivated by fluctuation theory in probability and combinatorics, which again appeared in the work of Connes and Kreimer on renormalization of quantum field theory~\mcite{CK} as a fundamental algebraic structure. See~\mcite{Guo1} for further details.
\end{ex}

We separately define a special case that will be important to us.

\begin{defi}\mlabel{de:twoop}
    Let $(A,\apr)$ be a $\bfk$-algebra and $(R,\ell,r)$ be an $A$-bimodule. Let $s:R\to A$ be a linear map. A linear map $\alpha:R\to A$ is called an {\bf $\calo$-operator \rwith $s$} associated to $(R,\ell,r)$  if
    \begin{equation}\notag
    \alpha(u)\apr\alpha(v)=\aop(\ell(\aop)u))v)+\aop(ur(\aop(v)))+\aop(ur(s(v))), \;\; \forall u, v\in R.
    \mlabel{eq:aops}
    \end{equation}
Likewise $\alpha$ is called an {\bf $\calo$-operator \lwith $s$
        associated to $(R,\ell,r)$} when the third term in the above
    equation is replaced by $\aop(\ell(s(u))v)$.

When the $A$-bimodule is taken to be $(A,L_A,R_A)$,
the operator is called the {\bf Rota-Baxter operator \rwith $s$}
(resp. {\bf \lwith $s$}).
\end{defi}

Obviously the operators in Definition~\mref{de:twoop} are the special cases of the operators in Definition~\mref{de:goop} when the binary operation $\circ$ are defined by
$$u\circ v:=ur(s(v)) \text{ (resp. } u\circ v:=\ell(s(u))v), \quad \forall u, v\in R.$$

To recover the notion of $\calo$-operators with scalar weights introduced in~\mcite{BGN1}, we recall a concept combining $A$-bimodules with $\bfk$-algebras~\mcite{ZBG}.

\begin{defi}
Let $(A,\apr)$ be a $\bfk$-algebra with multiplication $\apr$ and
let $(R,\rpr)$ be a $\bfk$-algebra with multiplication $\rpr$. Let
$\ell, r:A\rightarrow \End_\bfk(R)$ be linear maps. We call
$R$ (or the quadruple
$(R,\rpr,\ell,r)$) an {\bf $A$-bimodule $\bfk$-algebra} if
$(R,\ell,r)$ is an $A$-bimodule that is compatible with the
multiplication $\rpr$ on $R$ in the sense that
\begin{equation}\notag
\ell(x)(v\rpr
w)=(\ell(x)v)\rpr w,\;(v\rpr w)r(x)=v\rpr (wr(x)),\;
(vr(x))\rpr w=v\rpr
(\ell(x)w),\;  \mlabel{eq:twoalg}
\end{equation}
for all $x,y\in A, v,w\in R$.
\mlabel{de:bimal}
\end{defi}

Obviously, $(A,\apr, L_A,R_A)$ is an $A$-bimodule $\bfk$-algebra.

In Definition~\mref{de:goop}, when the $A$-bimodule $(R,\ell,r)$ with multiplication $\ast $ is assumed to be an $A$-bimodule $\bfk$-algebra and when $u \circ v=\lambda u\ast  v$ for $\lambda\in\bfk$, we recover the following notion of an $\calo$-operator with weight $\lambda$ in~\mcite{BGN1}:
\begin{defi}
Let $(A,\apr)$ be a $\bfk$-algebra and let $(R,\ast,\ell,r)$ be an $A$-bimodule $\bfk$-algebra. Let $\lambda\in \bfk$. A linear map $\alpha:R\to A$ is called an {\bf $\calo$-operator of weight $\lambda$} associated to $(R,\ast,\ell,r)$ if $\alpha$ satisfies
\begin{equation}\mlabel{eq:oop}\notag
\alpha(u)\apr \alpha(v)=\alpha(\ell(\alpha(u))v)+\alpha(ur(\alpha(v)))
+\lambda \alpha(u\ast v), \quad \forall u, v\in R.
\end{equation}
When $\ast=0$, then $\calo$ is called an {\bf $\mathcal O$-operator} (of weight zero) associate to the $A$-bimodule $(R, \ell, r)$.
\mlabel{de:oop}
\end{defi}

When $R$ is the $A$-bimodule $\bfk$-algebra $(A,L_A,R_A)$ with $u \circ v:=\lambda u \apr v$ for $\lambda\in \bfk$ and the default multiplication $\apr$ of $A$, we recover the notion of a Rota-Baxter operator $P$ of weight $\lambda$ defined in Eq.~\meqref{eq:rbo}.

These structures can be summarized in the commutative diagram
\begin{equation}\notag
\begin{split}
\xymatrix{
\text{Rota-Baxter operators} \atop \text{\lwith/\rwith\ } s\ar@{^{(}->}[rr] \ar@{_{(}->}[d] && \text{Rota-Baxter operators} \atop \text{of weight  }\circ \ar@{_{(}->}[d] && \text{Rota-Baxter operators} \atop \text{of weight }\lambda \ar@{_{(}->}[ll] \ar@{_{(}->}[d]\\
\calo\text{-operators} \atop \text{\lwith/\rwith\ } s \ar@{^{(}->}[rr]&& \calo\text{-operators} \atop \text{of weight }\circ  && \calo\text{-operators} \atop \text{of weight }\lambda  \ar@{_{(}->}[ll]
}
\end{split}
\label{eq:diaga}
\end{equation}

\subsection{Operator forms of solutions of \nybe}

We recall the notion of the \nybe and give an interpretation of
solutions of the \nybe in terms of the \gen $\calo$-operators
just introduced.

Let $(A,\apr, \bfone)$ be a unital $\bfk$-algebra of which the multiplication
$\apr$ is often suppressed. For $r=\sum_i a_i\otimes b_i\in A\otimes
A$, denote
\begin{equation}
r_{12}:=\sum_i a_i\otimes b_i\otimes {\bf 1},\;\; r_{13}:=\sum_i
a_i\otimes {\bf 1}\otimes b_i,\;\;r_{23}:=\sum_i {\bf 1}\otimes
a_i\otimes b_i.
\mlabel{eq:3ten}
\end{equation}
Then $r_{12}r_{13},r_{13}r_{23},r_{23}r_{12}$ are elements in the
$\bfk$-algebra $A\otimes A\otimes A$.

\begin{defi} Let $A$ be a unital $\bfk$-algebra and let $r\in A\ot A$.
    \begin{enumerate}
        \item $r$ is a solution of the {\bf associative Yang-Baxter
            equation (AYBE)}
        \begin{equation}\mlabel{eq:aybe}
        r_{12}r_{13}+r_{13}r_{23}-r_{23}r_{12}=0
        \end{equation}
        in $A$ if the equation holds with the notation in Eq.~\meqref{eq:3ten}.
        \item
        Fix a $\mu\in \bfk$. $r$ is a solution of the {\bf $\mu$-nonhomogeneous associative Yang-Baxter equation ($\mu$-\nybe)}
        \begin{equation}\mlabel{eq:nhybe}
        r_{12}r_{13}+r_{13}r_{23}-r_{23}r_{12}=\mu r_{13}
        \end{equation}
        in $A$ if the equation holds with the notation in Eq.~\meqref{eq:3ten}.
        \end{enumerate}
    \mlabel{de:nybe}
\end{defi}
The opposite form of Eq.~\meqref{eq:nhybe} is~\mcite{EF}
\begin{equation}
r_{13} r_{12}+r_{23} r_{13}-r_{12} r_{23}=\mu r_{13}. \mlabel{eq:nhybe-opp}
\end{equation}

\begin{defi} \mlabel{de:tsymm}
Let $A$ be a unital $\bfk$-algebra and $\mu\in \bfk$.
Let $r\in A\ot A$. Define the $\mu$-{\bf \tsymm} of $r$ to be
        \begin{equation}\sr:=r+\sigma(r)-\mu ({\bf 1}\otimes {\bf 1}).
        \mlabel{eq:tw}
        \end{equation}
\end{defi}
The prefix $\mu$ in Definitions~\mref{de:nybe} and \mref{de:tsymm} will be suppressed when its meaning is clear from the context.

Let $r\in A\otimes A$. Define linear maps $r^\sharp,
r^{t\sharp}:A^\ast \rightarrow A$ by the canonical bijections
\begin{equation}\mlabel{eq:bij0}\notag
(\underline{\ })^\sharp: A\ot A\cong \Hom_\bfk(A^\ast ,\bfk)\ot A \cong \Hom_\bfk(A^\ast ,A), \, (\underline{\ })^{t\sharp}=(\underline{\ })^\sharp\sigma:
A\ot A \to \Hom_\bfk(A^\ast ,A).
\end{equation}
Explicitly, $r^\sharp$ and $r^{t\sharp}$ are determined by
\begin{equation}\notag
\langle r^\sharp (a^\ast ),b^\ast \rangle=\langle r,a^\ast \otimes
b^\ast \rangle,\;\;\langle r^{t\sharp}(a^\ast ),b^\ast \rangle=\langle r,
b^\ast \otimes a^\ast \rangle,\;\;\forall a^\ast ,b^\ast \in A^\ast .
\end{equation}
With these notations, $r$ is called
{\bf nondegenerate} if the linear map $r^\sharp$ or $r^{t\sharp}$
is a linear isomorphism. Otherwise, $r$ is called {\bf
    degenerate}. Furthermore, $r$ is symmetric if and only if
\begin{equation} \mlabel{eq:rsymm}\notag
\langle r, a^\ast \ot b^\ast\rangle =\langle r, b^\ast\ot a^\ast\rangle, \text{ that is, } \langle r^\sharp(a^\ast ),b^\ast \rangle = \langle r^\sharp(b^\ast ),a^\ast \rangle,
\;\; \forall a^\ast , b^\ast \in A^\ast .
\end{equation}

We now give an operator form of solutions of the \nybe in terms of the generalized $\calo$-operators with weights given by multiplications.
\begin{thm}\mlabel{pro:iff}
Let $(A,\apr,{\bf 1})$ be a unital $\bfk$-algebra. For $r\in A\otimes A$,  let $\sr$ be the \tsymm of $r$ and let $\sr^\sharp:A^\ast \to A$ be the corresponding linear map.
Then the following statements are equivalent.
\begin{enumerate}
\item  The tensor $r$ is a solution of the \nybe in $A$.
\mlabel{it:iffeq}
\item \mlabel{it:iffr}
The following equation holds.
\begin{equation}\mlabel{eq:rsharp}
r^\sharp(a^\ast )\apr r^\sharp
(b^\ast )+r^\sharp(a^\ast L^\ast (r^{t\sharp}(b^\ast )))-r^\sharp
(R^\ast (r^\sharp(a^\ast ))b^\ast )-\mu r^\sharp (\langle {\bf 1},b^\ast \rangle
a^\ast )=0,\;\;\forall a^\ast ,b^\ast \in A^\ast .
\end{equation}
\item \mlabel{it:iffoop} The linear map $r^\sharp$ from $r$ is an
$\calo$-operator \rwith $-\sr^\sharp$ associated to $(A^\ast
,R^\ast ,L^\ast )$.
\item \mlabel{it:iffrt} The following
equation holds.
\begin{equation}\mlabel{eq:rsharpt}
r^{t\sharp}(a^\ast )\apr r^{t\sharp}
(b^\ast )-r^{t\sharp}(a^\ast L^\ast (r^{t\sharp}(b^\ast )))+r^{t\sharp}
(R^\ast (r^{\sharp}(a^\ast ))b^\ast )-\mu r^{t\sharp} (\langle {\bf
1},a^\ast \rangle b^\ast )=0,\forall a^\ast ,b^\ast \in A^\ast .
\end{equation}
\item The linear map $r^{t\sharp}$ from $\sigma(r)$ is an
$\calo$-operator \lwith $-\sr^\sharp$ associated to $(A,R^\ast
,L^\ast )$.
\mlabel{it:iffoopt}
\end{enumerate}
\end{thm}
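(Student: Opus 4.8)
The plan is to reduce all five statements to the single tensor identity of the \nybe by pairing it against covectors in two of the three slots, and then to recognize the resulting identities in $A$ as the stated operator conditions. Writing $r=\sum_i a_i\ot b_i$, I would first record the explicit formulas $r^\sharp(a^\ast)=\sum_i\langle a_i,a^\ast\rangle b_i$ and $r^{t\sharp}(a^\ast)=\sum_i\langle b_i,a^\ast\rangle a_i$, the dual-action conventions $\langle u^\ast L^\ast(x),v\rangle=\langle u^\ast,xv\rangle$ and $\langle R^\ast(x)u^\ast,v\rangle=\langle u^\ast,vx\rangle$, and the identities $L^\ast(\mathbf 1)=R^\ast(\mathbf 1)=\id_{A^\ast}$. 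These are the only tools needed.

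For (\mref{it:iffeq})$\Leftrightarrow$(\mref{it:iffr}), expanding the three products in $A^{\ot 3}$ shows that the left-minus-right side of the \nybe equals
\[
\sum_{i,j} a_ia_j\ot b_i\ot b_j+\sum_{i,j}a_i\ot a_j\ot b_ib_j-\sum_{i,j}a_i\ot a_jb_i\ot b_j-\mu\sum_i a_i\ot \mathbf 1\ot b_i.
\]
I would then apply $\langle\,\cdot\,,a^\ast\rangle\ot\langle\,\cdot\,,b^\ast\rangle\ot\id$ and relabel the double sums; the four summands become, respectively, $r^\sharp(a^\ast L^\ast(r^{t\sharp}(b^\ast)))$, $r^\sharp(a^\ast)\apr r^\sharp(b^\ast)$, $-r^\sharp(R^\ast(r^\sharp(a^\ast))b^\ast)$ and $-\mu\langle\mathbf 1,b^\ast\rangle r^\sharp(a^\ast)$, whose sum is exactly the left-hand side of \meqref{eq:rsharp}. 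Since $A$ is finite-dimensional, the family of maps $\langle\,\cdot\,,a^\ast\rangle\ot\langle\,\cdot\,,b^\ast\rangle\ot\id$ (as $a^\ast,b^\ast$ range over $A^\ast$) separates points of $A^{\ot 3}$, so the tensor identity holds iff \meqref{eq:rsharp} holds for all $a^\ast,b^\ast$. For (\mref{it:iffeq})$\Leftrightarrow$(\mref{it:iffrt}) I would run the identical computation but pair in the last two slots via $\id\ot\langle\,\cdot\,,a^\ast\rangle\ot\langle\,\cdot\,,b^\ast\rangle$, which sends the same four summands to the four terms of \meqref{eq:rsharpt}.

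The two remaining equivalences (\mref{it:iffr})$\Leftrightarrow$(\mref{it:iffoop}) and (\mref{it:iffrt})$\Leftrightarrow$(\mref{it:iffoopt}) are purely formal rewritings that absorb the $\mu(\mathbf 1\ot\mathbf 1)$-term into the twist. Substituting $\sr^\sharp=r^\sharp+r^{t\sharp}-\mu(\mathbf 1\ot\mathbf 1)^\sharp$ and using $L^\ast(\mathbf 1)=\id_{A^\ast}$, the third (twist) term $r^\sharp(a^\ast L^\ast(-\sr^\sharp(b^\ast)))$ of the right-twisted $\calo$-operator identity for $r^\sharp$ combines with its right-action term $r^\sharp(a^\ast L^\ast(r^\sharp(b^\ast)))$ to produce precisely $-r^\sharp(a^\ast L^\ast(r^{t\sharp}(b^\ast)))+\mu\langle\mathbf 1,b^\ast\rangle r^\sharp(a^\ast)$, which turns that identity into \meqref{eq:rsharp}. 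The computation for (\mref{it:iffrt})$\Leftrightarrow$(\mref{it:iffoopt}) is the mirror image, collapsing $R^\ast(r^{t\sharp}(a^\ast))-R^\ast(\sr^\sharp(a^\ast))$ via $R^\ast(\mathbf 1)=\id_{A^\ast}$.

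The main obstacle is bookkeeping rather than conceptual. The crux is matching each of the four tensor summands to the correct operator term, which demands care with the dual-bimodule conventions, distinguishing the left action $R^\ast$ from the right action $L^\ast$ and both from $L,R$ on $A$, and with relabelling the double sums so that $a_ia_j$, $b_ib_j$, and $a_jb_i$ land in their intended slots. Once these conventions are pinned down each step is a direct substitution, but a single transposed index or an interchanged $\sharp$ and $t\sharp$ destroys the correspondence, so keeping them consistent throughout is where the real attention must go.
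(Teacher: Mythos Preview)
Your proposal is correct and follows essentially the same approach as the paper: the paper also pairs the \nybe tensor against covectors (it uses all three slots $a^\ast\ot b^\ast\ot c^\ast$ rather than two, but this is an immaterial difference) to obtain \meqref{eq:rsharp} and \meqref{eq:rsharpt}, and then performs exactly your substitution $r^{t\sharp}=-r^\sharp+\sr^\sharp+\mu\langle\mathbf 1,\cdot\rangle\mathbf 1$ together with $L^\ast(\mathbf 1)=\id$ (resp.\ $R^\ast(\mathbf 1)=\id$) to pass between \meqref{eq:rsharp} and the right-twisted $\calo$-operator identity (resp.\ between \meqref{eq:rsharpt} and the left-twisted one).
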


\begin{proof} Let $r=\sum_ia_i\otimes b_i$ and $a^\ast ,b^\ast ,c^\ast \in
A^\ast $.

\meqref{it:iffeq} $\Longleftrightarrow$ \meqref{it:iffr}.  We have
\begin{eqnarray*}
\langle r_{12}\apr r_{13}, a^\ast \otimes b^\ast \otimes c^\ast \rangle&=&
\sum_{i,j} \langle a_i\apr a_j, a^\ast \rangle\langle b_i,
b^\ast \rangle\langle b_j, c^\ast \rangle= \sum_j \langle
r^{t\sharp}(b^\ast )\apr a_j, a^\ast \rangle \langle b_j, c^\ast \rangle
\\
&=&\langle r^\sharp(a^\ast L^\ast (r^{t\sharp}(b^\ast ))), c^\ast \rangle,
\end{eqnarray*}
\begin{eqnarray*}
\langle r_{13}\apr r_{23}, a^\ast \otimes b^\ast \otimes
c^\ast \rangle&=&\sum_{i,j}\langle a_i, a^\ast \rangle\langle a_j,
b^\ast \rangle\langle b_i\apr b_j, c^\ast \rangle=\sum_j\langle a_j,
b^\ast \rangle \langle r^\sharp(a^\ast )\apr b_j, c^\ast \rangle\\
&=&\langle r^\sharp(a^\ast )\apr r^\sharp(b^\ast ),c^\ast \rangle,\\
\langle -r_{23}\apr r_{12}, a^\ast \otimes b^\ast \otimes
c^\ast \rangle&=&-\sum_{i,j}\langle a_i, a^\ast \rangle\langle a_j\apr b_i,
b^\ast \rangle\langle b_j, c^\ast \rangle=-\sum_j\langle
a_j\apr r^\sharp(a^\ast ),
b^\ast \rangle \langle b_j, c^\ast \rangle\\
&=&\langle -r^\sharp (R^\ast (r^\sharp(a^\ast ))b^\ast ),c^\ast \rangle,\\
\langle -\mu r_{13}, a^\ast \otimes b^\ast \otimes c^\ast \rangle&=&-\mu\sum_i
\langle a_i,a^\ast \rangle\langle {\bf 1},b^\ast \rangle\langle
b_i,c^\ast \rangle=
\langle -\mu r^\sharp(a^\ast ), c^\ast \rangle\langle {\bf 1}, b^\ast \rangle\\
&=&\langle -\mu r^\sharp(\langle {\bf 1},b^\ast \rangle a^\ast ),
c^\ast \rangle.
\end{eqnarray*}
Hence $r$ satisfies Eq.~(\mref{eq:nhybe}) if and only if
Eq.~(\mref{eq:rsharp}) holds.

\meqref{it:iffr} $\Longleftrightarrow$ \meqref{it:iffoop}. From
the definition of the \tsymm of $r$:
$\sr=r+\sigma(r)-\mu(\bfone\ot \bfone)$, we obtain
$$\sr^\sharp(b^\ast )=r^\sharp(b^\ast ) +r^{t\sharp}(b^\ast ) - \mu \langle \bfone, b^\ast \rangle\bfone, \quad \forall b^\ast \in A^\ast $$
and hence
$$r^{t\sharp}(b^\ast )=-r^\sharp(b^\ast ) +\sr^\sharp(b^\ast ) + \mu \langle \bfone, b^\ast \rangle\bfone, \quad \forall b^\ast \in A^\ast .$$
Further $L^\ast (\bfone)$ is the identity map  on $A^\ast $.
Thus Eq.~\meqref{eq:rsharp} is equivalent to
$$r^\sharp(a^\ast )\apr r^\sharp
(b^\ast )-r^\sharp(a^\ast L^\ast (r^{\sharp}(b^\ast )))-r^\sharp
(R^\ast (r^\sharp(a^\ast ))b^\ast )+r^\sharp(a^\ast L^\ast (\sr^\sharp(b^\ast )))=0,\;\;\forall
a^\ast ,b^\ast \in A^\ast ,$$ as needed.

\meqref{it:iffeq} $\Longleftrightarrow$ \meqref{it:iffrt}. Similarly, we have
\begin{eqnarray*}
\langle r_{12}\apr r_{13}, a^\ast \otimes b^\ast \otimes c^\ast \rangle&=&
\sum_j \langle
r^{t\sharp}(b^\ast )\apr a_j, a^\ast \rangle \langle b_j, c^\ast \rangle
=\langle r^{t\sharp}(b^\ast )\apr r^{t\sharp}(c^\ast ), a^\ast \rangle,\\
\langle r_{13}\apr r_{23}, a^\ast \otimes b^\ast \otimes
c^\ast \rangle&=&\sum_j\langle a_i, a^\ast \rangle \langle b_i\apr r^{\sharp}(b^\ast ),c^\ast \rangle
=\langle r^{t\sharp}(R^\ast (r^{\sharp}(b^\ast ))c^\ast ),a^\ast \rangle,\\
\langle -r_{23}\apr r_{12}, a^\ast \otimes b^\ast \otimes
c^\ast \rangle&=&-\sum_{j}\langle a_i, a^\ast \rangle
\langle r^{t\sharp}(c^\ast )\apr b_i,
b^\ast \rangle=-\langle r^{t\sharp}(b^\ast L^\ast (r^{t\sharp}(c^\ast ))),a^\ast \rangle,\\
\langle -\mu r_{13}, a^\ast \otimes b^\ast \otimes c^\ast \rangle&=& \langle
-\mu r^{t\sharp}(c^\ast ), a^\ast \rangle\langle {\bf 1}, b^\ast \rangle
=\langle -\mu r^{t\sharp}(\langle {\bf 1},b^\ast \rangle c^\ast ),
a^\ast \rangle.
\end{eqnarray*}
Hence $r$ satisfies Eq.~(\mref{eq:nhybe}) if and only if Eq.~(\mref{eq:rsharpt}) holds.

\meqref{it:iffrt} $\Longleftrightarrow$ \meqref{it:iffoopt}. The proof is the same as for \meqref{it:iffr} $\Longleftrightarrow$ \meqref{it:iffoop}.
\end{proof}

We now show that the opposite \nybe in Eq.~\meqref{eq:nhybe-opp} also affords an operator form.
\begin{lem}\label{lem:twoversions}\mlabel{lem:equi}
    Let $(A,\apr,{\bf 1})$ be a unital $\bfk$-algebra. Let $r\in A\otimes A$. Then $r$ satisfies
Eq.~\meqref{eq:nhybe} if and only if $\sigma(r)$ satisfies Eq.~\meqref{eq:nhybe-opp}.
\end{lem}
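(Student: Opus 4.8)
The plan is to derive the opposite \nybe for $\sigma(r)$ from the \nybe for $r$ by applying a single linear automorphism of the threefold tensor algebra, so that the equivalence is immediate and no term-by-term expansion is needed. Let $\tau_{13}\colon A\ot A\ot A\to A\ot A\ot A$ be the linear map swapping the first and third tensor factors, $\tau_{13}(x\ot y\ot z)=z\ot y\ot x$. Because the multiplication on $A\ot A\ot A$ is componentwise, any permutation of the factors is an algebra automorphism; in particular $\tau_{13}$ is an algebra isomorphism, so it preserves both the order and the form of products.

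The key observation is how $\tau_{13}$ interchanges the leg notation of $r$ with that of $\sigma(r)$. Writing $r=\sum_i a_i\ot b_i$ and hence $\sigma(r)=\sum_i b_i\ot a_i$, a direct reading of Eq.~\meqref{eq:3ten} gives $\tau_{13}(r_{12})=\sigma(r)_{23}$, $\tau_{13}(r_{13})=\sigma(r)_{13}$, and $\tau_{13}(r_{23})=\sigma(r)_{12}$. Applying the homomorphism $\tau_{13}$ to the left-hand side of Eq.~\meqref{eq:nhybe} for $r$ therefore yields
\[
\tau_{13}(r_{12}r_{13}+r_{13}r_{23}-r_{23}r_{12})=\sigma(r)_{23}\sigma(r)_{13}+\sigma(r)_{13}\sigma(r)_{12}-\sigma(r)_{12}\sigma(r)_{23},
\]
while $\tau_{13}(\mu r_{13})=\mu\,\sigma(r)_{13}$. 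After reordering the three summands on the right, this is exactly the left-hand side of the opposite equation Eq.~\meqref{eq:nhybe-opp} for $\sigma(r)$. Since $\tau_{13}$ is a bijection, Eq.~\meqref{eq:nhybe} holds for $r$ if and only if its $\tau_{13}$-image holds, that is, if and only if Eq.~\meqref{eq:nhybe-opp} holds for $\sigma(r)$, which is the assertion.

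The argument is essentially bookkeeping, and the only point demanding care is the verification that $\tau_{13}$ is an algebra homomorphism rather than an anti-homomorphism (so that the three products are transported in the correct order), together with the precise matchings $\tau_{13}(r_{12})=\sigma(r)_{23}$, $\tau_{13}(r_{13})=\sigma(r)_{13}$, and $\tau_{13}(r_{23})=\sigma(r)_{12}$. I would double-check these three identities by expanding each $r_{ij}$ via Eq.~\meqref{eq:3ten}. Should one prefer a self-contained route, the same conclusion follows by writing out both Eq.~\meqref{eq:nhybe} for $r$ and Eq.~\meqref{eq:nhybe-opp} for $\sigma(r)$ as sums $\sum_{i,j}(\cdots)$ over the triple tensor factors and observing that $\tau_{13}$ carries each monomial of one into the corresponding monomial of the other; this is the routine calculation that the automorphism argument packages cleanly.
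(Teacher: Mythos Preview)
Your proof is correct and follows essentially the same approach as the paper: both use the permutation $\sigma_{13}=\tau_{13}$ swapping the first and third tensor factors to transport Eq.~\meqref{eq:nhybe} for $r$ into Eq.~\meqref{eq:nhybe-opp} for $\sigma(r)$. The paper expands both equations term-by-term into explicit sums and then observes that $\sigma_{13}$ carries one into the other, whereas you package this more cleanly by recognizing $\tau_{13}$ as an algebra automorphism of $A\ot A\ot A$ and applying it directly to the products; the underlying idea is the same, and you yourself note the term-by-term expansion as the alternative route.
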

\begin{proof}
Let $r=\sum_i a_i\otimes b_i\in A\otimes A$. Then $r$ satisfies
Eq.~(\mref{eq:nhybe}) if and only if
\begin{equation}
\sum_{i,j} (a_i\apr a_j\otimes b_i\otimes b_j+a_i\otimes a_j\otimes b_i\apr b_j-a_j\otimes a_i\apr b_j\otimes b_i-\mu a_i\otimes {\bf 1}\otimes b_i)=0.
\mlabel{eq:nhybe-s}
\end{equation}
On the other hand, $\sigma(r)=\sum_ib_i\otimes a_i$ satisfies Eq.~(\mref{eq:nhybe-opp}) if and only if
\begin{equation}
\sum_{i,j}(b_i\apr b_j\otimes a_j\otimes a_i+b_j\otimes b_i\otimes a_i\apr a_j-b_i\otimes a_i\apr b_j\otimes a_j-\mu b_i\otimes {\bf 1}\otimes a_i)=0.
\mlabel{eq:nhybe-sopp}
\end{equation}
Let $\sigma_{13}:A\otimes A\otimes A\rightarrow A\otimes A\otimes
A$ be the linear map defined by $\sigma (x\otimes y\otimes
z)=z\otimes y\otimes x$ for any $x,y,z\in A$. It is
straightforward to check that the left hand side of
Eq.~\eqref{eq:nhybe-s} coincides with the $\sigma_{13}$
applied to the left hand side of
Eq.~\eqref{eq:nhybe-sopp}. This completes the proof.
\end{proof}
Then we have
\begin{cor}
\mlabel{pro:iff-opp}
Let $(A,\apr,{\bf 1})$ be a unital $\bfk$-algebra. For $r\in A\otimes A$,  let $\sr$ be the \tsymm of $r$ and let $\sr^\sharp:A^\ast \to A$ be the corresponding linear map.
Then $r$ satisfies  Eq.~\meqref{eq:nhybe-opp} if and only if the linear
map $r^\sharp:A^\ast \to A$ from $r$ is an $\calo$-operator \lwith
$-\sr^\sharp$ associated to $(A^\ast ,R^\ast ,L^\ast
)$.
\end{cor}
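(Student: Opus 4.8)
The plan is to avoid recomputing the tensor expansion for the opposite equation from scratch, and instead to derive the corollary from Theorem~\mref{pro:iff} by transporting it through the flip $\sigma$ by means of Lemma~\mref{lem:equi}. Since Lemma~\mref{lem:equi} asserts that $r$ satisfies \meqref{eq:nhybe} if and only if $\sigma(r)$ satisfies \meqref{eq:nhybe-opp}, replacing $r$ by $\sigma(r)$ and using $\sigma^2=\id$ gives the reformulation that $r$ satisfies \meqref{eq:nhybe-opp} if and only if $\sigma(r)$ satisfies \meqref{eq:nhybe}. Thus it suffices to apply Theorem~\mref{pro:iff} to the tensor $\sigma(r)$ and then read off the resulting operator condition in terms of the data attached to $r$.

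First I would invoke the equivalence \meqref{it:iffeq} $\Longleftrightarrow$ \meqref{it:iffoopt} of Theorem~\mref{pro:iff} for the tensor $\sigma(r)$ in place of $r$. The only real work is to translate the sharp maps and the symmetrizer under the flip. From the identity $(\underline{\ })^{t\sharp}=(\sigma(\underline{\ }))^\sharp$ one gets $(\sigma(r))^{t\sharp}=(\sigma(\sigma(r)))^\sharp=r^\sharp$ and $\sigma(\sigma(r))=r$, so the linear map named in \meqref{it:iffoopt} for $\sigma(r)$ is precisely $r^\sharp$, and it is the map built ``from'' $r$. Next I would check that the symmetrizer is $\sigma$-invariant: the symmetrizer of $\sigma(r)$ is $\sigma(r)+\sigma(\sigma(r))-\mu(\bfone\ot\bfone)=r+\sigma(r)-\mu(\bfone\ot\bfone)=\sr$, so its associated twist is $-\sr^\sharp$. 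Hence \meqref{it:iffoopt} applied to $\sigma(r)$ states exactly that $r^\sharp$ is an $\calo$-operator \lwith $-\sr^\sharp$ associated to $(A^\ast,R^\ast,L^\ast)$.

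Assembling the two steps yields the chain: $r$ satisfies \meqref{eq:nhybe-opp} if and only if $\sigma(r)$ satisfies \meqref{eq:nhybe} (Lemma~\mref{lem:equi}), if and only if $r^\sharp$ is an $\calo$-operator \lwith $-\sr^\sharp$ associated to $(A^\ast,R^\ast,L^\ast)$ (Theorem~\mref{pro:iff} for $\sigma(r)$), which is the claim. I expect the sole delicate point to be the bookkeeping of the interchange of $\sharp$ and $t\sharp$ under $\sigma$ together with the $\sigma$-invariance of the symmetrizer $\sr$; once these two identities are in hand the corollary is immediate, requiring no further computation in $A\ot A\ot A$.
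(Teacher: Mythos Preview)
Your proof is correct and follows essentially the same route as the paper: apply Lemma~\mref{lem:equi} to reduce to the \nybe for $\sigma(r)$, then invoke Theorem~\mref{pro:iff} (specifically the equivalence \meqref{it:iffeq} $\Longleftrightarrow$ \meqref{it:iffoopt}) with $\sigma(r)$ in place of $r$, using $(\sigma(r))^{t\sharp}=r^\sharp$ and the $\sigma$-invariance of $\sr$. The paper's one-line proof records only the key identity $\sigma(r)^\sharp=r^{t\sharp}$; you have simply unpacked the same argument in more detail.
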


\begin{proof}
Since $\sigma(r)^\sharp=r^{t\sharp}$, the conclusion
follows from Theorem~\mref{pro:iff} and Lemma~\mref{lem:equi}.
\end{proof}

\subsection{Operator forms of solutions in a Frobenius algebra}
We now consider the solutions of the \nybe in a Frobenius algebra.

\begin{defi}
    Let $(A,\apr)$ be a $\bfk$-algebra. A tensor $s\in A\ot A$ is called {\bf invariant} if
    \begin{equation}\notag
    ({\rm id}\otimes L(x)-R(x)\otimes{\rm id})s=0,\;\;\forall x\in
    A.\mlabel{eq:invariant}
    \end{equation}
\end{defi}

\begin{lem} \mlabel{le:syin} {\rm (\mcite{BGN1})}
    Let $(A,\apr)$ be a $\bfk$-algebra.
    Let $s\in A\otimes A$ be symmetric. Then the following conditions
    are equivalent.
    \begin{enumerate}
        \item $s$ is invariant.
        \mlabel{it:sin}
        \item\mlabel{it:ssy}
        $s^\sharp$ satisfies
        \begin{equation}\notag
        R^\ast (s^\sharp(a^\ast ))b^\ast =a^\ast L^\ast (s^\sharp(b^\ast )),\quad \forall
        a^\ast ,b^\ast \in A^\ast .\mlabel{eq:dualssy}
        \end{equation}
        \item \mlabel{it:sbi}
        $s^\sharp$ satisfies
        \begin{equation}\mlabel{eq:dualsabi}\notag
        s^\sharp(R^\ast (x)a^\ast )=x\apr s^\sharp(a^\ast ),\quad
        s^\sharp(a^\ast L^\ast (x))=s^\sharp(a^\ast )\apr x,\quad \forall x\in
        A,a^\ast \in A^\ast.
        \end{equation}
    \end{enumerate}
\end{lem}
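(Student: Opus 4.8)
The plan is to prove all three equivalences by straightforward dualization through the pairing $\langle\,,\,\rangle$, converting the tensor identity in \mref{it:sin} into operator identities for $s^\sharp$. First I would fix $s=\sum_i a_i\ot b_i$, so that $s^\sharp(a^\ast)=\sum_i\langle a_i,a^\ast\rangle b_i$ and the symmetry of $s$ reads $s^\sharp=s^{t\sharp}$, that is, $\langle s^\sharp(a^\ast),b^\ast\rangle=\langle s^\sharp(b^\ast),a^\ast\rangle$ for all $a^\ast,b^\ast\in A^\ast$. I would also record the dual actions of the adjoint bimodule, namely $\langle a^\ast L^\ast(x),v\rangle=\langle a^\ast,x\apr v\rangle$ and $\langle R^\ast(x)a^\ast,v\rangle=\langle a^\ast,v\apr x\rangle$, which is all the input needed.

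For \mref{it:sin} $\Leftrightarrow$ \mref{it:ssy}, I would pair the invariance tensor $(\id\ot L(x)-R(x)\ot\id)s$ against an arbitrary $a^\ast\ot b^\ast$. The first summand gives $\sum_i\langle a_i,a^\ast\rangle\langle x\apr b_i,b^\ast\rangle=\langle x\apr s^\sharp(a^\ast),b^\ast\rangle$, while the second gives $\sum_i\langle a_i\apr x,a^\ast\rangle\langle b_i,b^\ast\rangle$. Here I would use symmetry to rewrite $\sum_i\langle b_i,b^\ast\rangle a_i=s^{t\sharp}(b^\ast)=s^\sharp(b^\ast)$, turning the second summand into $\langle s^\sharp(b^\ast)\apr x,a^\ast\rangle$. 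Thus invariance is equivalent to $\langle x\apr s^\sharp(a^\ast),b^\ast\rangle=\langle s^\sharp(b^\ast)\apr x,a^\ast\rangle$ for all $x,a^\ast,b^\ast$. Separately, dualizing \mref{it:ssy} against an arbitrary $v\in A$ and using the recorded formulas for $L^\ast,R^\ast$ rewrites it as $\langle v\apr s^\sharp(a^\ast),b^\ast\rangle=\langle s^\sharp(b^\ast)\apr v,a^\ast\rangle$; these two are the same identity (with $x=v$), so the equivalence follows, the nondegeneracy of the pairing supplying each direction.

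For \mref{it:sin} $\Leftrightarrow$ \mref{it:sbi}, I would pair invariance against $a^\ast\ot b^\ast$ again, but this time move $R(x)$ across the pairing via $\langle a_i\apr x,a^\ast\rangle=\langle a_i,R^\ast(x)a^\ast\rangle$ rather than using symmetry, so that the second summand becomes $\langle s^\sharp(R^\ast(x)a^\ast),b^\ast\rangle$. Since $b^\ast$ is arbitrary, invariance is then equivalent to the first identity of \mref{it:sbi}, namely $s^\sharp(R^\ast(x)a^\ast)=x\apr s^\sharp(a^\ast)$. I would then obtain the second identity $s^\sharp(a^\ast L^\ast(x))=s^\sharp(a^\ast)\apr x$ by pairing each side against an arbitrary $b^\ast$, pushing everything onto $a^\ast$ using the $L^\ast,R^\ast$ formulas together with the symmetry of $s^\sharp$, and finally invoking the already established first identity; the converse direction just reverses these steps.

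The computations are entirely routine, so I expect the only real difficulty to be bookkeeping: consistently applying the convention $\langle v,u^\ast\rangle=\langle u^\ast,v\rangle$, keeping the dual actions on the correct side, and, most importantly, being careful about exactly where the symmetry $s^\sharp=s^{t\sharp}$ is invoked, since it is precisely this symmetry that collapses the two a priori different rewritings of the $R(x)\ot\id$ term and makes \mref{it:ssy} (a single symmetric identity) and \mref{it:sbi} (a pair of module-map identities) equivalent to the same invariance condition.
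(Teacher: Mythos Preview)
Your argument is correct: the dualization through the pairing is exactly the right mechanism, and you have tracked the conventions for $L^\ast,R^\ast$ and the role of the symmetry $s^\sharp=s^{t\sharp}$ accurately. Note that the paper does not actually supply a proof of this lemma; it merely quotes the statement from \mcite{BGN1}, so there is nothing to compare against beyond observing that your direct computation is the standard way to establish it.
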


\begin{rmk}\mlabel{rmk:noninariant}
    For a unital $\bfk$-algebra $(A,{\bf 1})$, it is obvious that ${\bf 1}\otimes {\bf 1}$ is not invariant when
    $\dim A\geq 2$.
\end{rmk}

\begin{defi} \mlabel{de:1.4}
 A bilinear form $\frakB:=\frakB(\; ,\; )$ on a $\bfk$-algebra $(A,\apr )$ is called {\bf invariant} if
\begin{equation}\notag
 \mathfrak{B}(a\apr b, c)=\mathfrak{B}(a, b\apr c),~~\forall~a, b, c\in A.
 \mlabel{eq:1.3}
 \end{equation}
A {\bf Frobenius algebra} $(A,\frakB)$ is a $\bfk$-algebra $A$
with a nondegenerate invariant bilinear form $\frakB(\; ,\; )$. A
Frobenius algebra $(A,\frakB)$ is called {\bf symmetric} if
$\frakB(\; ,\; )$ is symmetric.
\end{defi}

Let $\Iso_\bk(M,N)$ denote the set of linear bijections between
$\bfk$-vector spaces $M$ and $N$ of the same dimension. Let
$\ndhom(A\ot A,\bk)$ and $\ndt(A\ot A)$ denote the set of
nondegenerate bilinear forms on $A$ and nondegenerate tensors in
$A\ot A$ respectively. Then by definition, the linear bijection
$\Hom_\bk(A\ot A,\bk) \cong \Hom_\bk(A,A^\ast )$ restricts to a bijection $\ndhom_\bk(A\ot A,\bk) \cong
\Iso_\bk(A,A^\ast )$. Similarly, the linear bijection $A\ot A\cong
\Hom_\bk(A^\ast ,A)$ restricts to a bijection $\ndt(A\ot A) \cong
\Iso_\bk(A^\ast ,A)$. Then thanks to the bijection
$\Iso_\bk(A,A^\ast )\cong \Iso_\bk(A^\ast ,A)$ by taking inverse,
we obtain a bijection
\begin{equation}
\ndhom_\bk(A\ot A,\bk)\cong \Iso_\bk(A,A^\ast )\cong \Iso_\bk(A^\ast ,A) \cong \ndt(A\ot A).
\mlabel{eq:bij}
\end{equation}
Explicitly, let $\frakB$ be a nondegenerate
bilinear form. Let $\phi^\sharp=\phi^\sharp_\frakB:A^\ast \rightarrow
A$ be the linear isomorphism defined by
\begin{equation}\mlabel{eq:phi}
\langle {\phi^\sharp}^{-1}(x), y\rangle=\frakB(x,y),\forall x,y\in
A.
\end{equation}
The corresponding tensor $\phi\in A\ot A$ is the one induced from
the linear map $\phi^\sharp$.

\begin{lem}
Let $(A,\apr )$ be a $\bfk$-algebra. A nondegenerate bilinear form is symmetric and  invariant $($and hence gives a symmetric Frobenius algebra $(A,\apr ,\frakB)$$)$ if and only if the corresponding
$\phi\in A\ot A$ via Eq.~\eqref{eq:bij} is symmetric and invariant.
\mlabel{lem:Frobenius}
\end{lem}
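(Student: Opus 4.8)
The plan is to treat the two adjectives ``symmetric'' and ``invariant'' separately, packaging the structural content into the already established equivalences of Lemma~\mref{le:syin}. Throughout write $\psi:=(\phi^\sharp)^{-1}:A\to A^\ast$, which by Eq.~\meqref{eq:phi} is the map determined by $\langle \psi(x),y\rangle=\frakB(x,y)$ for all $x,y\in A$; nondegeneracy of $\frakB$ is exactly the statement that $\psi$, equivalently $\phi^\sharp$, is a linear isomorphism, so that $\phi\in \ndt(A\ot A)$ corresponds to $\frakB\in \ndhom_\bk(A\ot A,\bk)$ under the bijections of Eq.~\meqref{eq:bij}. The key device is that, $\psi$ being bijective, I may substitute $a^\ast=\psi(x)$ and $b^\ast=\psi(y)$ for arbitrary $x,y\in A$ and use $\phi^\sharp(\psi(x))=x$ to pass freely between statements in $A^\ast$ (about $\phi^\sharp$) and statements in $A$ (about $\frakB$).

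First I would settle the symmetry equivalence. From $\langle \phi^\sharp(a^\ast),b^\ast\rangle=\langle \phi,a^\ast\ot b^\ast\rangle$ and the substitution $a^\ast=\psi(x)$, $b^\ast=\psi(y)$, one computes $\langle \phi^\sharp(a^\ast),b^\ast\rangle=\langle x,\psi(y)\rangle=\frakB(y,x)$ and, symmetrically, $\langle \phi^\sharp(b^\ast),a^\ast\rangle=\frakB(x,y)$, using $\langle v,u^\ast\rangle=\langle u^\ast,v\rangle$. Hence the identity $\langle \phi^\sharp(a^\ast),b^\ast\rangle=\langle \phi^\sharp(b^\ast),a^\ast\rangle$ defining symmetry of $\phi$ holds for all $a^\ast,b^\ast\in A^\ast$ if and only if $\frakB(y,x)=\frakB(x,y)$ for all $x,y\in A$, i.e. $\frakB$ is symmetric.

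Next, assuming now that $\frakB$ (equivalently $\phi$) is symmetric, I would prove the invariance equivalence by invoking \meqref{it:sin} $\Leftrightarrow$ \meqref{it:ssy} of Lemma~\mref{le:syin} for the symmetric tensor $s=\phi$: that lemma gives that $\phi$ is invariant if and only if $R^\ast(\phi^\sharp(a^\ast))b^\ast=a^\ast L^\ast(\phi^\sharp(b^\ast))$ for all $a^\ast,b^\ast\in A^\ast$. Substituting $a^\ast=\psi(x)$, $b^\ast=\psi(y)$, pairing both sides against an arbitrary $v\in A$, and unwinding the dual actions $R^\ast,L^\ast$ together with $\langle \psi(x),w\rangle=\frakB(x,w)$, this condition turns into the bilinear identity $\frakB(y,v\apr x)=\frakB(x,y\apr v)$ for all $x,y,v\in A$. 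It then remains to check that, granted symmetry of $\frakB$, this identity is equivalent to invariance $\frakB(a\apr b,c)=\frakB(a,b\apr c)$: one direction is the chain $\frakB(y,v\apr x)=\frakB(v\apr x,y)=\frakB(v,x\apr y)=\frakB(x\apr y,v)=\frakB(x,y\apr v)$, while the converse applies the identity twice together with symmetry ($\frakB(a\apr b,c)=\frakB(c,a\apr b)=\frakB(b,c\apr a)=\frakB(a,b\apr c)$). Combining the two equivalences yields that $\frakB$ is symmetric and invariant precisely when $\phi$ is symmetric and invariant.

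I expect the only real friction to be bookkeeping in the invariance step: keeping straight that $\phi^\sharp$ is the \emph{inverse} of $\psi$, so the substitution $a^\ast=\psi(x)$ returns $\phi^\sharp(a^\ast)=x$, and correctly unwinding the left/right dual actions of the dual adjoint bimodule $(A^\ast,R^\ast,L^\ast)$ when pairing against $v$. No genuinely hard step arises, since the structural work has been absorbed into Lemma~\mref{le:syin}; the argument is essentially a transport of its condition \meqref{it:ssy} across the isomorphism $\psi$.
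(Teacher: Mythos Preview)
Your proposal is correct and follows essentially the same approach as the paper: both arguments transport the symmetry and invariance conditions across the isomorphism $\phi^\sharp$ and invoke the equivalence \meqref{it:sin}$\Leftrightarrow$\meqref{it:ssy} of Lemma~\mref{le:syin} for the invariance step. The only difference is cosmetic: the paper computes $\frakB(y\apr z,x)-\frakB(y,z\apr x)$ directly and lands immediately on the Lemma~\mref{le:syin} condition, whereas you first obtain the cyclic-looking identity $\frakB(y,v\apr x)=\frakB(x,y\apr v)$ and then spend a short paragraph reconciling it with the standard invariance identity via symmetry---a harmless detour that the paper avoids by a more judicious choice of which side of $\frakB$ to unfold.
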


\begin{proof} For any $a^\ast ,b^\ast \in A^\ast $, let $x=\phi^\sharp(a^\ast )$ and
    $y=\phi^\sharp(b^\ast )$.
Then from Eq.~\eqref{eq:phi} we obtain
$$\frakB(x,y)=\langle (\phi^\sharp)^{-1}(x),y\rangle =\langle a^\ast , \phi^\sharp(b^*)\rangle =\langle b^\ast\ot a^\ast ,\phi\rangle.$$
Thus $\frakB(x,y)-\frakB(y,x)=\langle b^\ast\ot a^\ast-a^\ast\ot b^\ast, \phi\rangle$ which
shows that $\frakB$ is symmetric if and only if $\phi$ is
symmetric.

Then under the symmetric condition of $\frakB$ and hence of $\phi$, for any $z\in A$, we have
\begin{align*}
\frakB(y\apr z,x)-\frakB(y,z\apr x)&= \frakB(\phi^\sharp(b^\ast )\apr z,\phi^\sharp(a^\ast ))- \frakB(\phi^\sharp(b^\ast ),z\apr \phi^\sharp(a^\ast ))\\
&= \langle a^\ast , \phi^\sharp(b^\ast )\apr z\rangle -
\langle b^\ast , z\apr \phi^\sharp(a^\ast )\rangle \\
&=\langle a^\ast L^\ast (\phi^\sharp(b^\ast )), z\rangle
-\langle R^\ast (\phi^\sharp(a^\ast ))b^\ast ,z\rangle\\
&= \langle a^\ast L^\ast (\phi^\sharp(b^\ast )) -\langle R^\ast
(\phi^\sharp(a^\ast ))b^\ast ,z\rangle.
\end{align*}
By Lemma~\mref{le:syin}, this shows that $\frakB$ is symmetric and invariant if and only if $\phi$ is symmetric and invariant.
\end{proof}

\begin{thm}\mlabel{cor:RBsystem}
Let $(A,\apr ,{\bf 1}, \frakB)$ be a unital symmetric Frobenius
algebra. Let $\phi^\sharp:A^\ast \rightarrow A$ be the linear
isomorphism defined by Eq.~\meqref{eq:phi}. For $r\in A\otimes A$, let the linear maps $P_r, P_r^t:A\rightarrow A$ be
defined respectively by
\begin{equation} \mlabel{eq:ppt}
P_r(x):=r^\sharp(\phi^\sharp)^{-1}(x),\;\;P^t_r(x):={r^t}^\sharp(\phi^\sharp)^{-1}(x),\;\;\forall
x\in A.
\end{equation}
Let $\sr^\sharp(a^\ast ):=r^\sharp(a^\ast )+r^{t\sharp}(a^\ast )-\mu\langle \bfone, a^\ast \rangle \bfone, a^\ast \in A^\ast $ be defined by the \tsymm $\sr$ of $r$.
Then the following statements are equivalent.
\begin{enumerate}
\item $r$ is a solution of the \nybe in $A$.
\mlabel{it:rbs-nybe}
\item \mlabel{it:rbs-pb}
The following equation holds.
\begin{equation}\mlabel{eq:rsharp1}
P_r(x)\apr P_r(y)=P_r(P_r(x)\apr y)-P_r(x\apr P_r^t(y))+\mu \frakB({\bf
1},y)P_r(x),\;\;\forall x,y\in A.
\end{equation}
\item \mlabel{it:rbs-pbt}
The following equation holds.
\begin{equation}\mlabel{eq:rsharpt1}
P^t_r(x)\apr P^t_r(y)=P^t_r(-P_r(x)\apr y)+P^t_r(x\apr P_r^t(y))+\mu \frakB({\bf
    1},x)P_r^t(y),\;\;\forall x,y\in A.
\end{equation}
\item
\mlabel{it:rbs-rbrt}
The operator $P_r$ on $A$ is a Rota-Baxter operator \rwith $-\sr^\sharp(\phi^\sharp)^{-1}$, that is,
\begin{equation}\notag
P_r(x)\apr P_r(y)=P_r(P_r(x)\apr y)+P_r(x\apr P_r(y)) - P_r(x\apr  \sr^\sharp
(\phi^\sharp)^{-1}(y)), \;\; \forall x, y \in A.
\end{equation}
\item \mlabel{it:rbs-rblt}
The operator $P_r^t$ on $A$ is a Rota-Baxter operator \lwith $-\sr^\sharp(\phi^\sharp)^{-1}$, that is,
\begin{equation}\notag
P_r^t(x)\apr P_r^t(y)=P_r^t(P_r^t(x)\apr y)+P_r^t(x\apr P_r^t(y)) -
P_r^t(\sr^\sharp (\phi^\sharp)^{-1}(x)\apr y), \;\; \forall x, y \in A.
\end{equation}
\end{enumerate}
\end{thm}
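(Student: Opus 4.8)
The plan is to transport each of the five statements through the Frobenius isomorphism $\phi^\sharp$ and thereby reduce everything to Theorem~\mref{pro:iff}, whose equivalent conditions (\mref{it:iffr}), (\mref{it:iffoop}), (\mref{it:iffrt}), (\mref{it:iffoopt}) are phrased in terms of $r^\sharp$ and $r^{t\sharp}$ on $A^\ast$. The bridge is that the tensor $\phi\in A\ot A$ corresponding to $\frakB$ is symmetric and invariant, which holds by Lemma~\mref{lem:Frobenius} since $(A,\apr,\frakB)$ is a symmetric Frobenius algebra. By Lemma~\mref{le:syin}(\mref{it:sbi}), invariance of $\phi$ gives
\begin{equation*}
\phi^\sharp(R^\ast(x)a^\ast)=x\apr\phi^\sharp(a^\ast),\qquad \phi^\sharp(a^\ast L^\ast(x))=\phi^\sharp(a^\ast)\apr x,\quad \forall x\in A,\ a^\ast\in A^\ast.
\end{equation*}
Since $(\phi^\sharp)^{-1}:A\to A^\ast$ is a bijection, I would substitute $a^\ast=(\phi^\sharp)^{-1}(x)$ and $b^\ast=(\phi^\sharp)^{-1}(y)$ throughout, so that $r^\sharp(a^\ast)=P_r(x)$, $r^\sharp(b^\ast)=P_r(y)$, $r^{t\sharp}(b^\ast)=P_r^t(y)$, and likewise for the remaining terms.

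Then I would establish (\mref{it:rbs-nybe}) $\Leftrightarrow$ (\mref{it:rbs-pb}) by rewriting Eq.~\meqref{eq:rsharp} of Theorem~\mref{pro:iff}(\mref{it:iffr}) after this substitution. The two transport terms become, via the displayed identities, $r^\sharp(a^\ast L^\ast(r^{t\sharp}(b^\ast)))=P_r(x\apr P_r^t(y))$ and $r^\sharp(R^\ast(r^\sharp(a^\ast))b^\ast)=P_r(P_r(x)\apr y)$, while the inhomogeneous term uses $\langle\bfone,b^\ast\rangle=\frakB(\bfone,y)$, which is where I would invoke the symmetry of $\frakB$ (since $\langle(\phi^\sharp)^{-1}(y),\bfone\rangle=\frakB(y,\bfone)=\frakB(\bfone,y)$ by Eq.~\meqref{eq:phi}). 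Collecting terms yields precisely Eq.~\meqref{eq:rsharp1}. The equivalence (\mref{it:rbs-nybe}) $\Leftrightarrow$ (\mref{it:rbs-pbt}) is identical, starting instead from Eq.~\meqref{eq:rsharpt} of Theorem~\mref{pro:iff}(\mref{it:iffrt}).

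For (\mref{it:rbs-nybe}) $\Leftrightarrow$ (\mref{it:rbs-rbrt}), I would start from the fact that $r$ solves the \nybe iff $r^\sharp$ is an $\calo$-operator \rwith $-\sr^\sharp$ associated to $(A^\ast,R^\ast,L^\ast)$ (Theorem~\mref{pro:iff}(\mref{it:iffoop})). Writing out the defining identity from Definition~\mref{de:twoop} and applying the same substitution, the first two terms translate to $P_r(P_r(x)\apr y)+P_r(x\apr P_r(y))$, and the twisting term $r^\sharp(a^\ast L^\ast(-\sr^\sharp(b^\ast)))$ becomes $-P_r(x\apr\sr^\sharp(\phi^\sharp)^{-1}(y))$; this is exactly the assertion that $P_r$ is a Rota-Baxter operator \rwith $-\sr^\sharp(\phi^\sharp)^{-1}$. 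The equivalence (\mref{it:rbs-nybe}) $\Leftrightarrow$ (\mref{it:rbs-rblt}) follows the same way from the left-twisted statement in Theorem~\mref{pro:iff}(\mref{it:iffoopt}) applied to $r^{t\sharp}$, now using the left-twisting clause of Definition~\mref{de:twoop} together with $\phi^\sharp(R^\ast(x)a^\ast)=x\apr\phi^\sharp(a^\ast)$.

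The only genuine bookkeeping — and the step most prone to sign or ordering errors — is matching each occurrence of $R^\ast$/$L^\ast$ against left/right multiplication in $A$ through the two invariance identities, and confirming that the \tsymm transports consistently as $\sr^\sharp(\phi^\sharp)^{-1}=P_r+P_r^t-\mu\frakB(\bfone,-)\bfone$ across all four twisted identities. No new difficulty arises beyond Theorem~\mref{pro:iff}; the symmetric Frobenius structure merely converts operators on $A^\ast$ into operators on $A$ and rewrites the pairing $\langle\bfone,-\rangle$ as $\frakB(\bfone,-)$.
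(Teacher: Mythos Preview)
Your proposal is correct and follows essentially the same approach as the paper: both substitute $a^\ast=(\phi^\sharp)^{-1}(x)$, $b^\ast=(\phi^\sharp)^{-1}(y)$, use the invariance of $\phi$ (equivalently of $\frakB$) to transport the $R^\ast$/$L^\ast$ terms to left/right multiplications in $A$, and reduce to Theorem~\mref{pro:iff}. There is one minor organizational difference worth noting: for (\mref{it:rbs-rbrt}) and (\mref{it:rbs-rblt}) the paper does not transport Theorem~\mref{pro:iff}(\mref{it:iffoop}) and (\mref{it:iffoopt}) directly as you do, but instead first derives the identity $P_r^t(x)=-P_r(x)+\sr^\sharp(\phi^\sharp)^{-1}(x)+\mu\frakB(x,\bfone)\bfone$ and substitutes it into (\mref{it:rbs-pb}) and (\mref{it:rbs-pbt}) respectively. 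Your route is slightly more uniform; the paper's route makes the relation $\sr^\sharp(\phi^\sharp)^{-1}=P_r+P_r^t-\mu\frakB(\cdot,\bfone)\bfone$ explicit as a byproduct.
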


\begin{proof} For any $x,y\in A$, set
$a^\ast ={\phi^\sharp}^{-1}(x),b^\ast ={\phi^\sharp}^{-1}(y)$, we have
\begin{eqnarray*}
P_r(x)\apr P_r(y)&=&r^\sharp(a^\ast )\apr r^\sharp(b^\ast ),\\
P_r(P_r(x)\apr y)&=&
r^\sharp{\phi^\sharp}^{-1}(r^\sharp{\phi^\sharp}^{-1}(x)\apr \phi^\sharp(b^\ast ))=r^\sharp{\phi^\sharp}^{-1}(r^\sharp(a^\ast )\apr \phi^\sharp(b^\ast ))=r^\sharp(R^\ast (r^\sharp(a^\ast ))b^\ast ),\\
P_r(x\apr P_r^t(y))&=& r^\sharp{\phi^\sharp}^{-1}(\phi^\sharp
(a^\ast )\apr r^{t\sharp}{\phi^\sharp}^{-1}(y))=r^\sharp{\phi^\sharp}^{-1}(\phi^\sharp
(a^\ast )\apr r^{t\sharp} (b^\ast ))=r^\sharp(a^\ast L^\ast (r^{t\sharp}(b^\ast ))),\\
\frakB({\bf 1},y)P_r(x)&=&P_r\phi^\sharp(a^\ast )\frakB({\bf 1},y)
=r^\sharp(\langle {\bf 1},b^\ast \rangle a^\ast ).
\end{eqnarray*}
Note that the invariance of $\phi$ given by
Lemma~\mref{lem:Frobenius} is used in deriving
Eqs.~\meqref{eq:rsharp1} and \meqref{eq:rsharpt1}. By
Theorem~\mref{pro:iff}, $r$ satisfies Eq.~(\mref{eq:nhybe}) if and
only if $P_r$ satisfies Eq.~(\mref{eq:rsharp1}). Similarly, we
show that $r$ satisfies Eq.~(\mref{eq:nhybe}) if and only if $P_r^t$
satisfies Eq.~(\mref{eq:rsharpt1}). Hence statements
\meqref{it:rbs-nybe} -- \meqref{it:rbs-pbt} are equivalent.

Next for
any $x\in A$ and $b^\ast \in A^\ast $, we have
\begin{eqnarray*}
\langle P_r(x)+P_r^t(x), b^\ast \rangle&=&\langle r^\sharp
({\phi^\sharp}^{-1}(x))+{r^t}^\sharp ({\phi^\sharp}^{-1}(x)),
b^\ast \rangle\\
&=&\langle \sr^\sharp{\phi^\sharp}^{-1}(x)+\mu \langle
{\phi^\sharp}^{-1}(x),{\bf 1}\rangle {\bf 1},
b^\ast \rangle\\
&=&\langle \sr^\sharp{\phi^\sharp}^{-1}(x)+\mu \frakB(x,{\bf 1}){\bf
1}, b^\ast \rangle.
\end{eqnarray*}
Hence
$$P^t_r(x)=-P_r(x)+\sr^\sharp{\phi^\sharp}^{-1}(x)+\mu \frakB(x,{\bf 1}){\bf 1},\;\;\forall x\in A.$$
Then the equivalence of the statement~\meqref{it:rbs-pb} (resp.  \meqref{it:rbs-pbt}) to the statement~\meqref{it:rbs-rbrt} (resp. \meqref{it:rbs-rblt}) follows from
applying this equation.
\end{proof}

We give an application to Rota-Baxter systems introduced by Brzezi\'{n}ski~\mcite{Br}.

\begin{defi}Let $A$ be a $\bfk$-algebra. Let $P,S:A\rightarrow A$ be two linear maps. The triple $(A,P,S)$ is called
    a {\bf Rota-Baxter system} if for any $x, y\in A$, the following
    equations hold
$$    P(x)P(y)=P(P(x)y+xS(y)),\quad
S(x)S(y)=S(P(x)y+xS(y)).
$$
\end{defi}

Taking $\mu=0$ in the equivalent statements \meqref{it:rbs-nybe} -- \meqref{it:rbs-pbt} in Theorem~\mref{cor:RBsystem} gives
\begin{cor}\mlabel{co:rbs}
Let $(A,\apr ,{\bf 1}, \frakB)$ be a unital symmetric Frobenius
algebra. For $r\in A\ot A$, let $P_r$ and $P_r^t$ be defined as in Eq.~\meqref{eq:ppt}. Then $r$ is a solution of the \aybe in Eq.~\eqref{eq:aybe} if and only
    if $(A,P_r,-P^t_r)$ is a Rota-Baxter system.
\end{cor}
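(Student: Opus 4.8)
The plan is to derive this corollary directly by specializing Theorem~\ref{cor:RBsystem} to $\mu = 0$; no genuinely new computation is needed, only a matching of the resulting identities against the definition of a Rota-Baxter system. First I would observe that setting $\mu = 0$ in the $\mu$-\nybe of Eq.~\eqref{eq:nhybe} recovers exactly the \aybe of Eq.~\eqref{eq:aybe}, so that statement~\eqref{it:rbs-nybe} of Theorem~\ref{cor:RBsystem} specializes to the assertion that $r$ is a solution of the \aybe.

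Next I would write out statements~\eqref{it:rbs-pb} and~\eqref{it:rbs-pbt} of that theorem under $\mu = 0$. The terms $\mu\frakB(\bfone, y)P_r(x)$ and $\mu\frakB(\bfone, x)P_r^t(y)$ then vanish, leaving
\begin{align*}
P_r(x)\apr P_r(y) &= P_r(P_r(x)\apr y) - P_r(x\apr P_r^t(y)), \\
P_r^t(x)\apr P_r^t(y) &= -P_r^t(P_r(x)\apr y) + P_r^t(x\apr P_r^t(y)),
\end{align*}
for all $x, y\in A$, and by Theorem~\ref{cor:RBsystem} these two identities together are equivalent to $r$ being a solution of the \aybe.

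The final step is to put $P := P_r$ and $S := -P_r^t$ and verify that the two displayed identities are precisely the two defining equations of a Rota-Baxter system $(A, P, S)$. Substituting $S = -P_r^t$ into $P(x)P(y) = P(P(x)y + xS(y))$ and using linearity of $P_r$ gives $P_r(x)\apr P_r(y) = P_r(P_r(x)\apr y) - P_r(x\apr P_r^t(y))$, which is the first identity; substituting into $S(x)S(y) = S(P(x)y + xS(y))$ and cancelling the sign on the left-hand side gives $P_r^t(x)\apr P_r^t(y) = -P_r^t(P_r(x)\apr y) + P_r^t(x\apr P_r^t(y))$, which is the second. Chaining these reformulations with the equivalence \eqref{it:rbs-nybe} $\Leftrightarrow$ \eqref{it:rbs-pb} $\Leftrightarrow$ \eqref{it:rbs-pbt} of Theorem~\ref{cor:RBsystem} yields the desired statement. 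The only point requiring any attention is the sign bookkeeping forced by the choice $S = -P_r^t$; there is no analytic or structural obstacle here, since the substantive work, in particular the use of the invariance of $\phi$, has already been carried out in the proof of Theorem~\ref{cor:RBsystem}.
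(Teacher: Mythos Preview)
Your proposal is correct and follows the same approach as the paper: specialize Theorem~\ref{cor:RBsystem} to $\mu=0$ and identify the resulting equations~\eqref{eq:rsharp1} and~\eqref{eq:rsharpt1} with the defining relations of a Rota-Baxter system under the substitution $P=P_r$, $S=-P_r^t$. The paper's proof is the one-line remark that taking $\mu=0$ in the equivalence \eqref{it:rbs-nybe}--\eqref{it:rbs-pbt} of Theorem~\ref{cor:RBsystem} gives the result, so your expanded sign-checking simply makes this explicit.
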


\subsection{Operator forms of \inv solutions of \nybe} We now show that, under an invariant condition, solutions of the \nybe can be interpreted in terms of the usual $\mathcal O$-operators in Definition~\mref{de:oop}.

\begin{defi} \mlabel{de:inv}
    Let $(A,\apr)$ be a $\bfk$-algebra.
A tensor $r\in A\ot A$ is called {\bf \inv} if its \tsymm $\sr$
defined in Eq.~\meqref{eq:tw} is
        invariant.
\end{defi}

\begin{lem}
    \begin{enumerate}
        \item
        Let $(A,\apr, {\bf 1})$ be a unital $\bfk$-algebra. Let $s\in A\otimes A$
        be symmetric and invariant. Set
        \begin{equation}\mlabel{eq:circ}
        a^\ast \circ b^\ast  :=a^\ast L^\ast (s^\sharp(b^\ast ))=R^\ast (s^\sharp(a^\ast ))b^\ast ,\;\;\forall a^\ast ,b^\ast \in A^\ast .
        \end{equation}
        Then $(A^\ast ,\circ,R^\ast ,L^\ast )$ is an $A$-bimodule $\bfk$-algebra.
        \mlabel{it:inv1}
        \item
        Let $(A^\ast ,\circ,R^\ast ,L^\ast )$ be an $A$-bimodule
        $\bfk$-algebra. Define a linear map $s^\sharp:A^\ast \rightarrow A$
        or equivalently $s\in A\otimes A$ by
        \begin{equation}\mlabel{eq:defn}
        \langle s, a^\ast \otimes b^\ast \rangle :=\langle
        s^\sharp(a^\ast ),b^\ast \rangle :=\langle b^\ast \circ a^\ast ,{\bf
            1}\rangle,\;\;\forall a^\ast ,b^\ast \in A^\ast .
        \end{equation}
        Suppose
        \begin{equation}\mlabel{eq:ss1}
        \langle a^\ast \circ b^\ast ,{\bf 1}\rangle=\langle b^\ast \circ a^\ast , {\bf
            1}\rangle,\;\;\forall a^\ast ,b^\ast \in A^\ast ,
        \end{equation}
        and $s^\sharp$ satisfies
        \begin{equation}\mlabel{eq:sss}
        \langle s^\sharp(a^\ast )\apr x, b^\ast \rangle=\langle b^\ast \circ
        a^\ast ,x\rangle,\;\; \forall x\in
        A, a^\ast ,b^\ast \in A^\ast .
        \end{equation}
        Then $s$ is symmetric and invariant.
        \mlabel{it:inv2}
    \end{enumerate}
    \mlabel{lem:invariant}
\end{lem}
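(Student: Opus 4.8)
The plan is to reduce both implications to the equivalent reformulations of ``symmetric and invariant'' recorded in Lemma~\mref{le:syin}, combined with the defining adjunctions $\langle R^\ast(x)u^\ast,v\rangle=\langle u^\ast,v\apr x\rangle$ and $\langle u^\ast L^\ast(x),v\rangle=\langle u^\ast,x\apr v\rangle$ for the dual adjoint bimodule, together with the resulting module-homomorphism rules $R^\ast(x\apr y)u^\ast=R^\ast(x)(R^\ast(y)u^\ast)$ and $(u^\ast L^\ast(x))L^\ast(y)=u^\ast L^\ast(x\apr y)$. Throughout I use that $(A^\ast,R^\ast,L^\ast)$, being the dual of the adjoint bimodule $(A,L_A,R_A)$, is already an $A$-bimodule, so only the compatibility conditions of Definition~\mref{de:bimal} (here with $\ell=R^\ast$, $r=L^\ast$) and associativity remain to be checked in part~\mref{it:inv1}.

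For part~\mref{it:inv1}, I first note that the two expressions for $a^\ast\circ b^\ast$ in Eq.~\meqref{eq:circ} agree precisely because $s$ is symmetric and invariant, via the equivalence of \mref{it:sin} and \mref{it:ssy} in Lemma~\mref{le:syin}; this makes $\circ$ well defined. For each of the three compatibility identities I will evaluate one side using whichever of the two forms of $\circ$ turns the ambient $R^\ast(x)$ or $L^\ast(x)$ into a multiplication inside $s^\sharp$ through the intertwining relations $s^\sharp(R^\ast(x)a^\ast)=x\apr s^\sharp(a^\ast)$ and $s^\sharp(a^\ast L^\ast(x))=s^\sharp(a^\ast)\apr x$ of Lemma~\mref{le:syin}\,\mref{it:sbi}. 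Concretely, $R^\ast(x)(a^\ast\circ b^\ast)=(R^\ast(x)a^\ast)\circ b^\ast$ follows from the first form of $\circ$ plus $R^\ast(x\apr y)=R^\ast(x)R^\ast(y)$ and the first intertwining relation; the mirror identity $(a^\ast\circ b^\ast)L^\ast(x)=a^\ast\circ(b^\ast L^\ast(x))$ follows from the second form of $\circ$ and the second intertwining relation; and the mixed identity $(a^\ast L^\ast(x))\circ b^\ast=a^\ast\circ(R^\ast(x)b^\ast)$ follows by rewriting its left side as $R^\ast(s^\sharp(a^\ast)\apr x)b^\ast=R^\ast(s^\sharp(a^\ast))(R^\ast(x)b^\ast)$ and recognizing the right side. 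Finally, associativity reduces to the multiplicativity $s^\sharp(a^\ast\circ b^\ast)=s^\sharp(a^\ast)\apr s^\sharp(b^\ast)$, itself an immediate consequence of the first intertwining relation applied with $x=s^\sharp(a^\ast)$.

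For part~\mref{it:inv2}, symmetry of $s$ is immediate: the definition Eq.~\meqref{eq:defn} gives $\langle s,a^\ast\ot b^\ast\rangle=\langle b^\ast\circ a^\ast,\bfone\rangle$ and $\langle s,b^\ast\ot a^\ast\rangle=\langle a^\ast\circ b^\ast,\bfone\rangle$, and these coincide by hypothesis Eq.~\meqref{eq:ss1}. Since $s$ is now symmetric, by Lemma~\mref{le:syin} it suffices to verify the relations \mref{it:sbi}, which I establish by pairing against an arbitrary $b^\ast$. For $s^\sharp(a^\ast L^\ast(x))=s^\sharp(a^\ast)\apr x$, I expand the left side through Eq.~\meqref{eq:defn} as $\langle b^\ast\circ(a^\ast L^\ast(x)),\bfone\rangle$, move $L^\ast(x)$ outward using the compatibility $(b^\ast\circ a^\ast)L^\ast(x)=b^\ast\circ(a^\ast L^\ast(x))$, collapse $\langle(b^\ast\circ a^\ast)L^\ast(x),\bfone\rangle=\langle b^\ast\circ a^\ast,x\rangle$ by unitality $x\apr\bfone=x$, and finally convert to $\langle s^\sharp(a^\ast)\apr x,b^\ast\rangle$ via hypothesis Eq.~\meqref{eq:sss}. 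The companion relation $s^\sharp(R^\ast(x)a^\ast)=x\apr s^\sharp(a^\ast)$ is obtained the same way, now transporting $R^\ast(x)$ across $\circ$ by the mixed compatibility $(b^\ast L^\ast(x))\circ a^\ast=b^\ast\circ(R^\ast(x)a^\ast)$ and then reading the result back off Eq.~\meqref{eq:defn} and the adjunction for $L^\ast$. With both relations in hand, Lemma~\mref{le:syin} yields that $s$ is invariant.

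The content is entirely index-chasing with the adjunction formulas, so the only real care is the bookkeeping of two choices: in part~\mref{it:inv1}, selecting for each compatibility identity the form of $\circ$ that exposes the applicable intertwining relation, since the wrong form leaves an $R^\ast$ or $L^\ast$ that cannot be absorbed into $s^\sharp$; and in part~\mref{it:inv2}, reducing each expression $\langle -\circ -,\bfone\rangle$ correctly, routing it either straight back through Eq.~\meqref{eq:defn} or, when a bare $x$ survives after collapsing via unitality, through Eq.~\meqref{eq:sss}. I expect no genuine obstacle beyond keeping these conventions straight; the unitality of $A$ (used only to pass from $\langle-,x\apr\bfone\rangle$ to $\langle-,x\rangle$) and finite-dimensionality (identifying $A$ with $A^{\ast\ast}$, so that $s^\sharp$ determines $s$ and pairing against all $b^\ast$ is conclusive) are exactly the ambient hypotheses in force.
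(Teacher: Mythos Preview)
Your proposal is correct and follows essentially the same approach as the paper: both arguments reduce everything to Lemma~\mref{le:syin} together with the adjunction formulas for $R^\ast,L^\ast$ and the compatibility axioms of Definition~\mref{de:bimal}. The only cosmetic differences are that you package associativity via the multiplicativity $s^\sharp(a^\ast\circ b^\ast)=s^\sharp(a^\ast)\apr s^\sharp(b^\ast)$ where the paper expands both sides directly, and in part~\mref{it:inv2} you invoke Eq.~\meqref{eq:sss} for one of the two intertwining relations whereas the paper routes both through Eq.~\meqref{eq:ss1} and the mixed compatibility; either routing works.
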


\begin{proof}
    \meqref{it:inv1}.
    Let $a^\ast ,b^\ast ,c^\ast \in A^\ast $ and $x,y\in A$. Then we have
    \begin{eqnarray*}
        (a^\ast \circ b^\ast )\circ c^\ast &=& a^\ast L^\ast (s^\sharp(b^\ast ))\circ c^\ast  = a^\ast L^\ast (s^\sharp(b^\ast ))L^\ast (s^\sharp(c^\ast )), \\
        a^\ast \circ(b^\ast \circ c^\ast )&=&a^\ast \circ b^\ast L^\ast (s^\sharp(c^\ast )) = a^\ast L^\ast (s^\sharp(b^\ast L^\ast (s^\sharp(c^\ast )))) = a^\ast L^\ast (s^\sharp(b^\ast )\ast  s^\sharp(c^\ast )).
    \end{eqnarray*}
    Hence $(A^\ast ,\circ)$ is a $\bfk$-algebra. Moreover,
    \begin{eqnarray*}
        \langle R^\ast (x)(a^\ast \circ b^\ast ), y\rangle&=&\langle a^\ast L^\ast (s^\sharp(b^\ast )), y\apr x\rangle =\langle a^\ast , s^\sharp (b^\ast )\apr y\apr x\rangle,\\
        \langle (R^\ast (x)a^\ast )\circ b^\ast , y\rangle&=&\langle R^\ast (x)a^\ast ,s^\sharp(b^\ast )\apr y\rangle  = \langle a^\ast , s^\sharp (b^\ast )\apr y\apr x\rangle.
    \end{eqnarray*}
    Hence $R^\ast (x)(a^\ast \circ b^\ast )=(R^\ast (x)a^\ast )\circ b^\ast $. Similarly, we
    have
    $$(a^\ast \circ b^\ast )L^\ast (x)=a^\ast \circ (b^\ast L^\ast (x)),\;\;(a^\ast L^\ast (x))\circ
    b^\ast =a^\ast  \circ (R^\ast (x)b^\ast ).$$
    Therefore
    $(A^\ast ,\circ,R^\ast ,L^\ast )$ is an $A$-bimodule $\bfk$-algebra.

    \meqref{it:inv2}. Applying Eq.~(\mref{eq:ss1}) gives
    \begin{eqnarray*}
        \langle s, a^\ast \otimes b^\ast \rangle &=& \langle s^\sharp (a^\ast ), b^\ast \rangle= \langle b^\ast \circ a^\ast , {\bf 1} \rangle =  \langle a^\ast \circ b^\ast , {\bf 1} \rangle\\
        &=& \langle s^\sharp (b^\ast ),a^\ast \rangle=\langle s, b^\ast \otimes a^\ast \rangle,\;\;\forall
        a^\ast ,b^\ast \in A^\ast .
    \end{eqnarray*}
    Hence $s$ is symmetric. Since
    $(A^\ast ,\circ,R^\ast ,L^\ast )$ is an $A$-bimodule $\bfk$-algebra, we have
    \begin{eqnarray*}
        \langle x\apr s^\sharp(b^\ast ), a^\ast \rangle&=&\langle s^\sharp(b^\ast ),a^\ast  L^\ast (x)\rangle = \langle (a^\ast  L^\ast (x))\circ b^\ast ,{\bf 1}\rangle = \langle a^\ast \circ (R^\ast (x)b^\ast ), {\bf 1}\rangle  \\
        &=&\langle s^\sharp (R^\ast (x)b^\ast ), a^\ast \rangle, \\
        \langle s^\sharp(b^\ast )\apr x, a^\ast \rangle&=&\langle s^\sharp(b^\ast ),R^\ast (x)a^\ast \rangle = \langle (R^\ast (x) a^\ast ) \circ b^\ast ,{\bf 1}\rangle  =\langle b^\ast \circ (R^\ast (x)a^\ast ), {\bf 1}\rangle  \\
        &=&\langle (b^\ast L^\ast (x))\circ a^\ast , {\bf 1}\rangle  = \langle
        a^\ast \circ (b^\ast L^\ast (x)), {\bf 1}\rangle = \langle s^\sharp
        (b^\ast L^\ast (x)), a^\ast \rangle,
    \end{eqnarray*}
    where $x\in A, a^\ast ,b^\ast \in A^\ast $. Hence $s$ is invariant.
\end{proof}

\begin{rmk}In fact, under the same conditions as for Lemma~\mref{lem:invariant}, Eqs.~(\mref{eq:ss1}) and (\mref{eq:sss}) hold
    if and only if the following equation holds
    \begin{equation}\mlabel{eq:ssse}\notag
    \langle s^\sharp(a^\ast )\apr x, b^\ast \rangle=\langle b^\ast \circ a^\ast ,x\rangle=\langle x\apr s^\sharp(b^\ast ), a^\ast \rangle ,\;\; \forall
    x\in A, a^\ast ,b^\ast \in A^\ast .
    \end{equation}
\end{rmk}

\begin{thm}
Let $(A,\apr, {\bf 1})$  be a unital $\bfk$-algebra. Let $r\in
A\otimes A$ whose \tsymm $\sr$ is invariant.
Let $\circ$ be the binary operation defined from $\sr$ by
Eq.~\meqref{eq:circ}. Then the following statements are
equivalent.
\begin{enumerate}
\item The tensor $r$ is a solution of the \nybe in
Eq.~\meqref{eq:nhybe}. \mlabel{it:key1} \item When $\sr=0$, the
map $r^\sharp$ is an $\mathcal O$-operator of weight zero associated
to the $A$-bimodule $(A^\ast , R^\ast ,L^\ast )$ and when $\sr\ne 0$, the map $r^\sharp$ is an
$\mathcal O$-operator of weight $-1$ associated to the
$A$-bimodule $\bfk$-algebra $(A^\ast ,\circ, R^\ast ,L^\ast
)$.
\mlabel{it:key2} \item
When $\sr=0$, the map $r^{t\sharp}$ is an $\mathcal O$-operator of
weight zero associated to the $A$-bimodule $(A^\ast , R^\ast
,L^\ast )$ and when $\sr\ne 0$,
the map $r^{t\sharp}$ is an $\mathcal O$-operator of weight $-1$
associated to the $A$-bimodule $\bfk$-algebra $(A^\ast ,\circ,
R^\ast ,L^\ast )$.
\mlabel{it:key3}
\end{enumerate}
\mlabel{thm:key}
\end{thm}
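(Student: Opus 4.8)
The plan is to obtain both equivalences as direct consequences of the twisted-operator characterizations already established in Theorem~\mref{pro:iff}, the only additional input being that the invariance of $\sr$ lets one repackage the twisting map $-\sr^\sharp$ as the weight of a genuine $A$-bimodule $\bfk$-algebra structure. Concretely, Theorem~\mref{pro:iff} (items \meqref{it:iffoop} and \meqref{it:iffoopt}) tells us that the tensor $r$ solves the \nybe if and only if $r^\sharp$ is an $\calo$-operator \rwith $-\sr^\sharp$ associated to $(A^\ast,R^\ast,L^\ast)$, and if and only if $r^{t\sharp}$ is an $\calo$-operator \lwith $-\sr^\sharp$ associated to the same bimodule. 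So it suffices to recognize these two twisted conditions as the weight-$(-1)$ (or weight-zero) $\calo$-operator identities appearing in \meqref{it:key2} and \meqref{it:key3}.

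First I would treat \meqref{it:key1} $\Leftrightarrow$ \meqref{it:key2}. Unwinding the definition of an $\calo$-operator \rwith $-\sr^\sharp$ for the bimodule $(A^\ast,R^\ast,L^\ast)$ (Definition~\mref{de:twoop}) gives, for all $a^\ast,b^\ast\in A^\ast$,
\begin{equation*}
r^\sharp(a^\ast)\apr r^\sharp(b^\ast)=r^\sharp\big(R^\ast(r^\sharp(a^\ast))b^\ast\big)+r^\sharp\big(a^\ast L^\ast(r^\sharp(b^\ast))\big)+r^\sharp\big(a^\ast L^\ast(-\sr^\sharp(b^\ast))\big).
\end{equation*}
Since $\sr$ is symmetric and invariant, the operation $\circ$ of Eq.~\meqref{eq:circ} satisfies $a^\ast L^\ast(\sr^\sharp(b^\ast))=a^\ast\circ b^\ast$, so the last term equals $-r^\sharp(a^\ast\circ b^\ast)$. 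This is precisely the defining identity of an $\calo$-operator of weight $-1$ associated to $(A^\ast,\circ,R^\ast,L^\ast)$ in Definition~\mref{de:oop}. For this reinterpretation to be legitimate one needs $(A^\ast,\circ,R^\ast,L^\ast)$ to be an $A$-bimodule $\bfk$-algebra, which is supplied by Lemma~\mref{lem:invariant}\meqref{it:inv1} from the invariance of $\sr$. Finally I would split into cases: when $\sr\neq 0$ this yields the weight-$(-1)$ statement directly, and when $\sr=0$ one has $\circ=0$, the twisting term drops out, and the identity collapses to the weight-zero $\calo$-operator condition for $(A^\ast,R^\ast,L^\ast)$.

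The equivalence \meqref{it:key1} $\Leftrightarrow$ \meqref{it:key3} would be handled in exactly the same way, now starting from the \lwith $-\sr^\sharp$ characterization of $r^{t\sharp}$ in Theorem~\mref{pro:iff}. The only change is that the left-twisted third term reads $r^{t\sharp}(R^\ast(-\sr^\sharp(a^\ast))b^\ast)$, so I would invoke the other half of Eq.~\meqref{eq:circ}, namely $R^\ast(\sr^\sharp(a^\ast))b^\ast=a^\ast\circ b^\ast$, to rewrite it as $-r^{t\sharp}(a^\ast\circ b^\ast)$ and again match the weight-$(-1)$ (or, for $\sr=0$, weight-zero) identity.

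The computational content here is essentially nil; the substance of the argument is conceptual. The main point to get right is that the twisting map $-\sr^\sharp$ in Theorem~\mref{pro:iff} is a priori only a linear map, carrying no algebraic compatibility with $A$, whereas \meqref{it:key2} and \meqref{it:key3} assert membership in the more rigid class of weight-$\lambda$ $\calo$-operators over an $A$-bimodule $\bfk$-algebra. Bridging this gap is exactly where the hypothesis that $\sr$ is invariant is indispensable: it is what makes the two expressions in Eq.~\meqref{eq:circ} agree and, via Lemma~\mref{lem:invariant}, what promotes $\circ$ to a genuine $A$-bimodule $\bfk$-algebra multiplication. Keeping track of this upgrade, and of the harmless sign bookkeeping relating the twisting operation $u\mapsto ur(s(v))$ (with $s=-\sr^\sharp$) to the weight-$(-1)$ multiple of $\circ$, is the only place where care is required.
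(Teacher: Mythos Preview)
Your proposal is correct and follows essentially the same approach as the paper: invoke Theorem~\mref{pro:iff} to get the twisted $\calo$-operator characterizations, then use Lemma~\mref{lem:invariant}\meqref{it:inv1} together with the definition of $\circ$ in Eq.~\meqref{eq:circ} to recognize the right/left twist by $-\sr^\sharp$ as the weight-$(-1)$ (or weight-zero when $\sr=0$) identity over the $A$-bimodule $\bfk$-algebra $(A^\ast,\circ,R^\ast,L^\ast)$. The paper's proof is simply a terser version of what you wrote.
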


\begin{proof}
(\meqref{it:key1} $\Longleftrightarrow$ \meqref{it:key2}). Since $a^\ast \circ b^\ast :=a^\ast L^\ast (\sr^\sharp(b^\ast ))$ and by
Lemma~\mref{lem:invariant}, $(A^\ast ,\circ,R^\ast ,L^\ast )$ is an
$A$-bimodule $\bfk$-algebra, the equivalence follows from Theorem~\mref{pro:iff}.

The proof of (\meqref{it:key1} $\Longleftrightarrow$ \meqref{it:key3}) follows from the same argument.
\end{proof}

\begin{cor}\mlabel{cor:sigma}
Let $(A,\apr, {\bf 1})$  be a unital $\bfk$-algebra. Let $r\in
A\otimes A$ whose \tsymm is invariant. Then $r$ is a solution of the \nybe if and
only if $r$ satisfies Eq.~\meqref{eq:nhybe-opp}.
\end{cor}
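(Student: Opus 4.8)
The plan is to reduce the claimed equivalence to the two operator characterizations already in place, and then to observe that the invariance of $\sr$ collapses the distinction between the right-twisted and left-twisted conditions. Concretely, I would chain three equivalences: first the characterization of the \nybe, then the characterization of its opposite form, and in between the identification of the two twisted $\calo$-operator identities.

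First I would invoke Theorem~\mref{pro:iff}, namely the equivalence of \meqref{it:iffeq} and \meqref{it:iffoop}: the tensor $r$ is a solution of the \nybe if and only if $r^\sharp$ is an $\calo$-operator \rwith $-\sr^\sharp$ associated to $(A^\ast,R^\ast,L^\ast)$. On the other side, Corollary~\mref{pro:iff-opp} asserts that $r$ satisfies Eq.~\meqref{eq:nhybe-opp} if and only if the \emph{same} map $r^\sharp$ is an $\calo$-operator \lwith $-\sr^\sharp$ associated to the \emph{same} bimodule $(A^\ast,R^\ast,L^\ast)$. Thus it suffices to prove that, under the hypothesis that $\sr$ is invariant, these two operator identities are literally the same equation.

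Next I would compare the two identities term by term. By Definition~\mref{de:twoop}, for the dual bimodule $(A^\ast,R^\ast,L^\ast)$ (whose left action is $R^\ast$ and right action is $L^\ast$) the \rwith and \lwith defining identities for $r^\sharp$ share the first two terms $r^\sharp(R^\ast(r^\sharp(a^\ast))b^\ast)$ and $r^\sharp(a^\ast L^\ast(r^\sharp(b^\ast)))$, and differ only in the third: the right-twisted identity contributes $r^\sharp(a^\ast L^\ast(-\sr^\sharp(b^\ast)))$, while the left-twisted one contributes $r^\sharp(R^\ast(-\sr^\sharp(a^\ast))b^\ast)$. Hence the two identities agree for all $a^\ast,b^\ast\in A^\ast$ as soon as $a^\ast L^\ast(\sr^\sharp(b^\ast))=R^\ast(\sr^\sharp(a^\ast))b^\ast$.

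Finally I would supply precisely this identity from Lemma~\mref{le:syin}. The \tsymm $\sr=r+\sigma(r)-\mu({\bf 1}\otimes{\bf 1})$ is symmetric because $\sigma(\sr)=\sr$, and it is invariant by the standing hypothesis, so Lemma~\mref{le:syin} (equivalence of \meqref{it:sin} and \meqref{it:ssy}) gives exactly $R^\ast(\sr^\sharp(a^\ast))b^\ast=a^\ast L^\ast(\sr^\sharp(b^\ast))$ for all $a^\ast,b^\ast\in A^\ast$, equating the two third terms. Chaining the three equivalences then yields that $r$ solves the \nybe if and only if $r$ satisfies Eq.~\meqref{eq:nhybe-opp}. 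I expect no genuine obstacle here; the only point requiring care is to keep the left/right-action conventions of $(A^\ast,R^\ast,L^\ast)$ straight so that the two third terms are compared with the correct arguments, the uniform scalar $-1$ carried by $-\sr^\sharp$ cancelling on both sides and playing no role.
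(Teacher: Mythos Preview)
Your argument is correct, and it follows a different route from the paper's. The paper argues via Theorem~\mref{thm:key}: under the invariance hypothesis, $r$ solves the \nybe if and only if $r^{t\sharp}=(\sigma(r))^\sharp$ is an $\mathcal O$-operator of the appropriate weight, which (applying Theorem~\mref{thm:key} again to $\sigma(r)$, whose \tsymm equals that of $r$) happens if and only if $\sigma(r)$ solves the \nybe; then Lemma~\mref{lem:equi} turns this into $r$ satisfying Eq.~\meqref{eq:nhybe-opp}. Your route instead stays at the more primitive level of Theorem~\mref{pro:iff} and Corollary~\mref{pro:iff-opp} and uses Lemma~\mref{le:syin} directly to collapse the right-twisted and left-twisted identities, never invoking the $A$-bimodule $\bfk$-algebra structure of Lemma~\mref{lem:invariant} on which Theorem~\mref{thm:key} rests. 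The upshot is that your proof is slightly more elementary and self-contained, while the paper's proof exhibits the corollary as an immediate byproduct of the stronger structural equivalence in Theorem~\mref{thm:key}; both are equally valid, and your bookkeeping of the $(A^\ast,R^\ast,L^\ast)$ conventions and the common $-1$ factor is correct.
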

\begin{proof}
By Theorem~\mref{thm:key}, the tensor $r$ is a solution the \nybe
if and only if $\sigma(r)$ is a solution of the \nybe, which holds
if and only if $r$ is a solution of
Eq.~\eqref{eq:nhybe-opp} by Lemma~\ref{lem:equi}.
\end{proof}

\begin{rmk} \mlabel{rmk:none} For a unital $\bfk$-algebra $(A, {\bf 1})$, it is obvious that $\mu ({\bf
1\otimes 1})$ is a solution of the \nybe. However, if
$\mu\ne 0$ and $\dim A\geq 2$, then the \tsymm of $\mu ({\bf 1\otimes 1})$ is not invariant (see also
Remark~\mref{rmk:noninariant}).
\end{rmk}

\begin{cor}\mlabel{cor:converse}
Let $(A,\apr,{\bf 1})$ be a unital $\bfk$-algebra and $(A^\ast
,\circ,R^\ast ,L^\ast )$ be an $A$-bimodule $\bfk$-algebra
satisfying Eq.~\meqref{eq:ss1}. Let $s^\sharp:A^\ast \rightarrow
A$ be the linear map from $\circ$ defined by Eq.~\meqref{eq:defn} satisfying
Eq.~\meqref{eq:sss}. Let $P:A^\ast \rightarrow A$ be a linear map
satisfying
\begin{equation}\label{eq:Rcon}
P(a^*)+P^*(a^*)=s^\sharp(a^*)+\mu\langle a^\ast ,{\bf 1}\rangle
{\bf 1},\;\;\forall a^\ast \in A^\ast,
\end{equation}
where $P^\ast:A^\ast\rightarrow A^\ast $ is the dual map of $P$.
Then the following statements are equivalent.
\begin{enumerate}
\item When $s^\sharp=0$, $P$ is an $\mathcal O$-operator of weight
0 associated to $(A^\ast, R^\ast ,L^\ast )$ and when $s^\sharp\ne
0$, $P$ is an $\mathcal O$-operator of weight $-1$ associated to
$(A^\ast ,\circ, R^\ast ,L^\ast )$.
\mlabel{converse1}
\item When $s^\sharp=0$,
$P^\ast$ is an $\mathcal O$-operator of weight zero associated to
$(A^\ast, R^\ast ,L^\ast )$ and when $s^\sharp\ne 0$, $P^\ast$ is
an $\mathcal O$-operator of weight $-1$ associated to $(A^\ast
,\circ, R^\ast ,L^\ast )$.
\mlabel{converse2}
\item The tensor $r\in A\otimes A$
defined by $r^\sharp=P$ is a \inv solution of the \nybe.
\mlabel{converse3}
\item The tensor $r\in A\otimes A$ defined by
$r^{t\sharp}=P$ is a \inv solution of the \nybe.
\mlabel{converse4}
\end{enumerate}
\end{cor}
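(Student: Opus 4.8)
The plan is to reduce all four statements to Theorem~\mref{thm:key} and Corollary~\mref{cor:sigma}, the only real work being to verify that hypothesis~\meqref{eq:Rcon} forces the \tsymm of the tensor built from $P$ to coincide with the symmetric invariant tensor $s$, so that the invariance assumption of Theorem~\mref{thm:key} is met automatically. First I would record the duality bookkeeping. If $r\in A\ot A$ is the tensor defined by $r^\sharp=P$, then $P^\ast=r^{t\sharp}$: for all $a^\ast,b^\ast\in A^\ast$ one has $\langle P^\ast(a^\ast),b^\ast\rangle=\langle P(b^\ast),a^\ast\rangle=\langle r^\sharp(b^\ast),a^\ast\rangle=\langle r^{t\sharp}(a^\ast),b^\ast\rangle$. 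Using the identity $\sr^\sharp(a^\ast)=r^\sharp(a^\ast)+r^{t\sharp}(a^\ast)-\mu\langle\bfone,a^\ast\rangle\bfone$ recorded in the proof of Theorem~\mref{pro:iff}, the hypothesis~\meqref{eq:Rcon} then reads $\sr^\sharp(a^\ast)=s^\sharp(a^\ast)+\mu\langle a^\ast,\bfone\rangle\bfone-\mu\langle\bfone,a^\ast\rangle\bfone=s^\sharp(a^\ast)$, so the $\mu$-terms cancel and $\sr=s$. By the second part of Lemma~\mref{lem:invariant}, $s$ is symmetric and invariant; hence $r$ is \inv. Applying the same computation to the tensor with $r^{t\sharp}=P$ (whose sharp map is then $P^\ast$) again yields \tsymm equal to $s$, so the tensors in statements~\mref{converse3} and~\mref{converse4} are both automatically \inv, and the qualifier ``\inv'' in those statements is free.

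Next I would confirm that the operation $\circ$ supplied in the hypotheses is exactly the one attached to $s=\sr$ by Eq.~\meqref{eq:circ}. Unwinding the definition of $L^\ast$ and applying~\meqref{eq:sss}, for all $x\in A$ we get $\langle a^\ast L^\ast(s^\sharp(b^\ast)),x\rangle=\langle a^\ast,s^\sharp(b^\ast)\apr x\rangle=\langle a^\ast\circ b^\ast,x\rangle$, whence $a^\ast L^\ast(s^\sharp(b^\ast))=a^\ast\circ b^\ast$; the equality $R^\ast(s^\sharp(a^\ast))b^\ast=a^\ast\circ b^\ast$ follows from the invariance of $s$ via Lemma~\mref{le:syin}. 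Thus the $A$-bimodule $\bfk$-algebra $(A^\ast,\circ,R^\ast,L^\ast)$ is precisely the one produced from $\sr$ in Theorem~\mref{thm:key}, and (since $\sr=s$) its zero/nonzero dichotomy is governed by $s^\sharp=0$ versus $s^\sharp\ne 0$, matching the case split in~\mref{converse1}--\mref{converse4}.

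With these identifications the equivalences are immediate transcriptions. Applying Theorem~\mref{thm:key} to the tensor $r$ with $r^\sharp=P$ and invariant \tsymm $\sr=s$, the equivalence of its statements~\mref{it:key1} and~\mref{it:key2} says that $r$ solves the \nybe iff $P=r^\sharp$ is an $\calo$-operator of weight $0$ (when $s^\sharp=0$) or of weight $-1$ for $(A^\ast,\circ,R^\ast,L^\ast)$ (when $s^\sharp\ne 0$); since $r$ is automatically \inv, this is exactly \mref{converse1}$\Leftrightarrow$\mref{converse3}. The equivalence of~\mref{it:key1} and~\mref{it:key3} of the same theorem, now read off through $r^{t\sharp}=P^\ast$, gives \mref{converse2}$\Leftrightarrow$\mref{converse3}. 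Finally, the tensor $r'$ defined by $r'^{t\sharp}=P$ satisfies $r'=\sigma(r)$ (because $r'^{t\sharp}=(\sigma r')^\sharp=P=r^\sharp$ forces $\sigma r'=r$); as $r$ is \inv, Corollary~\mref{cor:sigma} shows $r$ solves the \nybe iff $\sigma(r)=r'$ does, which is \mref{converse3}$\Leftrightarrow$\mref{converse4}. These three equivalences chain together all four statements.

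I expect the main obstacle to be the duality bookkeeping of the first paragraph: correctly matching $P^\ast$ with $r^{t\sharp}$ and checking that~\meqref{eq:Rcon} collapses exactly to $\sr=s$, with the $\mu\langle\bfone,a^\ast\rangle\bfone$ terms cancelling. Once this is settled, the invariance hypothesis of Theorem~\mref{thm:key} holds without further assumption, and the remainder is purely a matter of invoking that theorem and Corollary~\mref{cor:sigma}.
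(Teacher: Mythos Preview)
Your proposal is correct and follows essentially the same approach as the paper: identify $P^\ast$ with $r^{t\sharp}$, show that hypothesis~\meqref{eq:Rcon} forces $\sr=s$ so that $r$ is automatically \inv, and then read off the equivalences from Theorem~\mref{thm:key}. Your treatment is in fact a bit more careful than the paper's in two places: you explicitly verify that the given operation $\circ$ coincides with the one produced from $s=\sr$ via Eq.~\meqref{eq:circ} (the paper leaves this implicit), and for \mref{converse3}$\Leftrightarrow$\mref{converse4} you route through $r'=\sigma(r)$ and Corollary~\mref{cor:sigma}, whereas the paper appeals to the ``symmetry of $P$ and $P^\ast$'' to swap their roles; both arguments amount to the same observation that $r$ and $\sigma(r)$ share the invariant \tsymm $s$.
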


\begin{proof}
By Lemma~\mref{lem:invariant}, the tensor $s$ from $s^\sharp$ is symmetric and invariant. Set
$P=r^\sharp$. Then for any $a^\ast ,b^\ast \in A^\ast $, we have
\begin{equation}\notag
\langle P(a^\ast )+P^\ast (a^\ast )+ s^\sharp (a^\ast )-\mu\langle a^\ast ,{\bf
1}\rangle {\bf 1}, b^\ast \rangle=\langle r+\sigma (r)+ s-\mu
({\bf 1}\otimes {\bf 1}), a^\ast \otimes b^\ast \rangle.
\mlabel{eq:R-r}
\end{equation}
Hence $P$ satisfies Eq.~(\mref{eq:Rcon}) if and only if the \tsymm of $r$ is symmetric and invariant. By Theorem~\mref{thm:key},
statement~\meqref{converse1} holds if and only if statement~\meqref{converse3} holds. Note that in
this case, $P^\ast =r^{t\sharp}$. Therefore by Theorem~\mref{thm:key}, statement~\meqref{converse2} holds if and only if statement~\meqref{converse1} or
statement~\meqref{converse3} holds.

Furthermore, by the symmetry of $P$ and $P^\ast $, if we set
$P=r^{t\sharp}$, then by the above discussion, we can directly show that
statement~\meqref{converse4} holds if and only if statement~\meqref{converse2} holds.
This proves that all the statements are equivalent.
\end{proof}

We end this subsection with displaying a relationship between
solutions of the \nybe with
trivial \tsymms and associative Yang-Baxter pairs.

\begin{defi} (\mcite{Br})
Let $A$ be a $\bfk$-algebra. {\bf An associative Yang-Baxter pair}
is a pair of elements $r,s\in A\otimes A$ satisfying
\begin{equation}
r_{12}r_{13}-r_{23}r_{12}+r_{13}s_{23}=0, \quad
r_{12}s_{13}-s_{23}s_{12}+s_{13}s_{23}=0.
\notag
\end{equation}
\end{defi}

\begin{pro}{\rm (\mcite{Br})}\mlabel{pro:Br} Let $(A,{\bf 1})$ be a unital $\bfk$-algebra. Let $r,s\in A\otimes A$. If $r-s={\bf 1}\otimes {\bf 1}$, then
the pair $(r,s)$ is an associative Yang-Baxter pair if and only if
$r$ satisfies the \nybe with $\mu=1$.
\end{pro}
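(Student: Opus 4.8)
The plan is to substitute the relation $s=r-{\bf 1}\otimes{\bf 1}$ directly into the two defining equations of an associative Yang-Baxter pair and show that each of them, after simplification, becomes exactly the \nybe with $\mu=1$ in Eq.~\meqref{eq:nhybe}. The crucial observation is that the three positional lifts of ${\bf 1}\otimes{\bf 1}$ to $A\otimes A\otimes A$ all coincide, namely $({\bf 1}\otimes{\bf 1})_{12}=({\bf 1}\otimes{\bf 1})_{13}=({\bf 1}\otimes{\bf 1})_{23}={\bf 1}\otimes{\bf 1}\otimes{\bf 1}=:E$, and that $E$ is the multiplicative identity of $A\otimes A\otimes A$, so $EX=XE=X$ for every $X$. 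Consequently the hypothesis $r-s={\bf 1}\otimes{\bf 1}$ translates into $s_{12}=r_{12}-E$, $s_{13}=r_{13}-E$, and $s_{23}=r_{23}-E$.

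First I would treat the first defining equation $r_{12}r_{13}-r_{23}r_{12}+r_{13}s_{23}=0$. Replacing $s_{23}$ by $r_{23}-E$ and using $r_{13}E=r_{13}$ gives $r_{12}r_{13}-r_{23}r_{12}+r_{13}r_{23}-r_{13}=0$, which after rearrangement is precisely $r_{12}r_{13}+r_{13}r_{23}-r_{23}r_{12}=r_{13}$, the \nybe with $\mu=1$. Thus the first equation, by itself, is equivalent to the \nybe with $\mu=1$, independently of the second equation.

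Next I would expand the second defining equation $r_{12}s_{13}-s_{23}s_{12}+s_{13}s_{23}=0$. Here all three factors must be expanded, producing the products $r_{12}r_{13}$, $r_{23}r_{12}$, $r_{13}r_{23}$, together with several linear terms $\pm r_{ij}$ coming from the identities $XE=X$ and $EX=X$, and the constant terms $\pm E$. The point is to collect carefully: the terms $\pm E$ cancel, the terms $\pm r_{12}$ cancel in a pair, the terms $\pm r_{23}$ cancel in a pair, and what survives is again $r_{12}r_{13}+r_{13}r_{23}-r_{23}r_{12}-r_{13}=0$. Hence the second equation is likewise equivalent to the \nybe with $\mu=1$.

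Since both defining equations of the pair $(r,s)$ reduce, under $s=r-{\bf 1}\otimes{\bf 1}$, to the single statement that $r$ satisfies the \nybe with $\mu=1$, the two conditions hold simultaneously exactly when $r$ solves the \nybe with $\mu=1$; this delivers both directions of the equivalence at once. The computation is entirely routine once the unit observation is in place, and the only mild subtlety to watch is the bookkeeping in the second equation, where one must verify that all the lower-order terms cancel—a cancellation that hinges precisely on $E$ being a two-sided identity of $A\otimes A\otimes A$.
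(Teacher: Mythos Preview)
Your argument is correct. The paper does not give its own proof of this proposition; it simply quotes the result from \cite{Br} without argument. Your direct substitution, based on the observation that $({\bf 1}\otimes{\bf 1})_{12}=({\bf 1}\otimes{\bf 1})_{13}=({\bf 1}\otimes{\bf 1})_{23}={\bf 1}\otimes{\bf 1}\otimes{\bf 1}$ is the two-sided identity of $A\otimes A\otimes A$, is precisely the natural verification. The cancellation you describe in the second equation is accurate: expanding $r_{12}(r_{13}-E)-(r_{23}-E)(r_{12}-E)+(r_{13}-E)(r_{23}-E)$ produces $r_{12}r_{13}+r_{13}r_{23}-r_{23}r_{12}-r_{13}$ after the $\pm r_{12}$, $\pm r_{23}$, and $\pm E$ terms cancel pairwise, so both defining equations of the pair indeed collapse to the \nybe with $\mu=1$.
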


\begin{cor} \mlabel{co:ybp}
Let $(A,{\bf 1})$ be a unital $\bfk$-algebra. Let $r\in A\otimes
A$. If
\begin{equation}
r+\sigma(r)=\mu ({\bf 1}\otimes {\bf 1})
\notag
\end{equation}
with $\mu\neq 0$, then $r$ is a solution of the \nybe in Eq.~\meqref{eq:nhybe} if and only if $(r,-\sigma(r))$ is an
associative Yang-Baxter pair.
\end{cor}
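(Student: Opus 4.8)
The plan is to reduce the statement to Proposition~\ref{pro:Br}, which treats the case $\mu=1$, via a rescaling that exploits the assumption $\mu\neq 0$. First I would set $s:=-\sigma(r)$, so that the hypothesis $r+\sigma(r)=\mu({\bf 1}\otimes{\bf 1})$ reads $r-s=\mu({\bf 1}\otimes{\bf 1})$. Since $\mu\neq 0$ I may define $r':=\mu^{-1}r$ and $s':=\mu^{-1}s$; then $s'=-\sigma(r')$ and $r'-s'={\bf 1}\otimes{\bf 1}$, so the pair $(r',s')$ meets the hypothesis of Proposition~\ref{pro:Br}.

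Next I would record the homogeneity of the two defining relations of an associative Yang-Baxter pair. In each of the relations $r_{12}r_{13}-r_{23}r_{12}+r_{13}s_{23}=0$ and $r_{12}s_{13}-s_{23}s_{12}+s_{13}s_{23}=0$, every summand is a product of two of the tensors drawn from $\{r,s\}$, so each relation is homogeneous of degree two in the pair $(r,s)$. Consequently, replacing $(r,s)$ by $(\mu^{-1}r,\mu^{-1}s)=(r',s')$ multiplies each relation by the nonzero scalar $\mu^{-2}$, and therefore $(r,s)$ is an associative Yang-Baxter pair if and only if $(r',s')$ is one. I would check the analogous scaling for the \nybe: the left-hand side of Eq.~\eqref{eq:nhybe} is homogeneous of degree two in $r$ while the right-hand side $\mu r_{13}$ is of degree one, so substituting $r=\mu r'$ and dividing by $\mu^2$ transforms the $\mu$-\nybe for $r$ into the $1$-\nybe for $r'$; hence $r$ solves the $\mu$-\nybe if and only if $r'$ solves the $1$-\nybe.

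Finally I would assemble the chain of equivalences: $r$ is a solution of the \nybe in Eq.~\eqref{eq:nhybe} if and only if $r'$ is a solution of the $1$-\nybe, if and only if (by Proposition~\ref{pro:Br}, applicable because $r'-s'={\bf 1}\otimes{\bf 1}$) the pair $(r',s')$ is an associative Yang-Baxter pair, if and only if $(r,s)=(r,-\sigma(r))$ is an associative Yang-Baxter pair. I do not expect a genuine obstacle here; the only point demanding care is the bookkeeping of the scaling weights, namely confirming that each relation is homogeneous of the stated degree so that multiplication by $\mu^{-2}$ (respectively the clearing of $\mu^2$ for the \nybe) preserves it, and verifying that the rescaled pair still satisfies $r'-s'={\bf 1}\otimes{\bf 1}$ so that Proposition~\ref{pro:Br} indeed applies.
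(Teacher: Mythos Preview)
Your proposal is correct and follows essentially the same route as the paper: both rescale by $\mu^{-1}$ to reduce to the case $\mu=1$ and then invoke Proposition~\ref{pro:Br}, using the homogeneity of the Yang-Baxter pair relations and of the \nybe to transport the equivalence back to $(r,-\sigma(r))$. Your write-up is slightly more explicit about the degree-two homogeneity, but the argument is the same.
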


\begin{proof}
Let $r\in A\ot A$ be a solution of the \nybe and
$r+\sigma(r)=\mu(\bfone \ot \bfone)$ with $\mu \neq 0$. Then
$r'=\frac{1}{\mu} r$ is a solution of the \nybe with $\mu=1$ and
$r'+\sigma(r')={\bf 1}\otimes {\bf 1}.$ By
Proposition~\mref{pro:Br}, $(r',-\sigma(r'))$ is an associative
Yang-Baxter pair. Hence  $(r,-\sigma(r))$ is an associative
Yang-Baxter pair. Similarly, the converse also holds.
\end{proof}

\section{\Nybe and Rota-Baxter operators}
\mlabel{sec:rbo} In this section, we first give a correspondence
between certain Rota-Baxter operators and \inv solutions of
the \nybe with a specific  \tsymm $\sr$ in unital symmetric
Frobenius algebras. 

When the tensor $\sr$ is degenerate, solutions of the \nybe
in semi-direct product algebras can still be derived from
Rota-Baxter operators, $\calo$-operators and dendriform algebras,
while Rota-Baxter operators can be derived from solutions of the
\nybe in unitization algebras.

\subsection{\Nybe and Rota-Baxter operators on Frobenius algebras}
Extending the correspondence between solutions of the \aybe and
Rota-Baxter systems on Frobenius algebras given in
Corollary~\mref{co:rbs} to the \nybe, we obtain

\begin{thm}\mlabel{cor:Frobenius}
Let $(A,\apr ,{\bf 1},\frakB)$ be a unital symmetric Frobenius
algebra.
Let $\phi^\sharp:A^\ast \rightarrow A$ be the linear
isomorphism from $\frakB$ defined by Eq.~\meqref{eq:phi} and let $\phi\in A\ot A$ be the corresponding invariant symmetric tensor.
Suppose $r\in A\otimes A$ has its \tsymm given by
\begin{equation}
\sr:=r+\sigma(r)-\mu ({\bf 1}\otimes {\bf 1})=-\lambda \phi. \mlabel{eq:phi-inv}
\end{equation}
Define linear maps $P_r, P_r^t:A\rightarrow A$ respectively
by
\begin{equation}
P_r(x):=r^\sharp{\phi^\sharp}^{-1}(x),\;\;P^t_r(x):={r^t}^\sharp{\phi^\sharp}^{-1}(x),\;\;\forall
x\in A.\mlabel{eq:Pr}
\end{equation}
Then the following conditions are equivalent.
\begin{enumerate}
\item $r$ is a solution of the \nybe in $A$.
\mlabel{frob1} \item $P_r$ is a Rota-Baxter operator of weight
$\lambda$, that is, Eq.~\meqref{eq:rbo} holds. \mlabel{frob2}
\item $P^t_r$ is a Rota-Baxter operator of weight $\lambda$.
\mlabel{frob3}
\end{enumerate}
\end{thm}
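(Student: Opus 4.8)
The plan is to read this statement off from Theorem~\mref{cor:RBsystem}, which already proves, for \emph{any} $r$ in a unital symmetric Frobenius algebra, that $r$ solves the \nybe if and only if $P_r$ is a Rota-Baxter operator \rwith $-\sr^\sharp(\phi^\sharp)^{-1}$ (statement \mref{it:rbs-rbrt}), and if and only if $P_r^t$ is a Rota-Baxter operator \lwith $-\sr^\sharp(\phi^\sharp)^{-1}$ (statement \mref{it:rbs-rblt}). Thus the only new ingredient is to evaluate the twisting map $-\sr^\sharp(\phi^\sharp)^{-1}$ under the present hypothesis $\sr=-\lambda\phi$ of Eq.~\meqref{eq:phi-inv} and to recognize the resulting identity.

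First I would use that the assignment $(\underline{\ })^\sharp\colon A\ot A\to \Hom_\bfk(A^\ast,A)$ is linear, so that the equality of tensors $\sr=-\lambda\phi$ gives the equality of maps $\sr^\sharp=-\lambda\,\phi^\sharp$. Composing on the right with the isomorphism $(\phi^\sharp)^{-1}$ then yields $-\sr^\sharp(\phi^\sharp)^{-1}=\lambda\,\phi^\sharp(\phi^\sharp)^{-1}=\lambda\,\id_A$. In particular the twisting map is the scalar operator $\lambda\,\id_A$, and by the Example following Definition~\mref{de:goop} a Rota-Baxter operator \rwith $\lambda\,\id_A$ (for which the induced weight operation is $x\circ y=\lambda\,x\apr y$) is exactly a Rota-Baxter operator of weight $\lambda$ in the sense of Eq.~\meqref{eq:rbo}.

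Next I would substitute $\sr^\sharp(\phi^\sharp)^{-1}(y)=-\lambda y$ into the twisted relations of statements \mref{it:rbs-rbrt} and \mref{it:rbs-rblt}. For $P_r$ the last term $-P_r(x\apr\sr^\sharp(\phi^\sharp)^{-1}(y))$ becomes $\lambda\,P_r(x\apr y)$, turning statement \mref{it:rbs-rbrt} into Eq.~\meqref{eq:rbo} for $P_r$; symmetrically, in statement \mref{it:rbs-rblt} the term $-P_r^t(\sr^\sharp(\phi^\sharp)^{-1}(x)\apr y)$ becomes $\lambda\,P_r^t(x\apr y)$, turning it into Eq.~\meqref{eq:rbo} for $P_r^t$. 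Chaining with the equivalence $\mref{it:rbs-nybe}\Leftrightarrow\mref{it:rbs-rbrt}\Leftrightarrow\mref{it:rbs-rblt}$ of Theorem~\mref{cor:RBsystem} then delivers $\mref{frob1}\Leftrightarrow\mref{frob2}$ and $\mref{frob1}\Leftrightarrow\mref{frob3}$, as desired.

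Since the substantive work is entirely contained in Theorem~\mref{cor:RBsystem}, I do not anticipate a genuine obstacle here. The one point demanding care is the bookkeeping of signs and the order of composition: one must verify that $\phi^\sharp$ and $(\phi^\sharp)^{-1}$ cancel to give $+\lambda\,\id_A$ rather than $-\lambda\,\id_A$, so that the weight emerges as $\lambda$ and not $-\lambda$. It is worth recording too that the hypothesis that $\phi$ be the symmetric invariant tensor---equivalently, by Lemma~\mref{lem:Frobenius}, that $\frakB$ be symmetric and invariant---is precisely what licenses the citation of Theorem~\mref{cor:RBsystem}, whose proof relies on the invariance of $\phi$.
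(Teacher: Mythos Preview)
Your proposal is correct and follows exactly the paper's approach: the paper's proof is the single line ``It follows from Theorem~\mref{cor:RBsystem} by taking $\sr^\sharp=-\lambda\,\phi^\sharp$,'' and you have carried out precisely this substitution, checking that $-\sr^\sharp(\phi^\sharp)^{-1}=\lambda\,\id_A$ so that the twisted Rota-Baxter conditions \mref{it:rbs-rbrt} and \mref{it:rbs-rblt} specialize to Eq.~\meqref{eq:rbo}. Your added remarks on signs and on the role of Lemma~\mref{lem:Frobenius} are accurate and in fact more explicit than the paper.
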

\begin{proof}
It follows from Theorem~\mref{cor:RBsystem} by taking
$\sr^\sharp=-\lambda \phi^\sharp$.
\end{proof}

A different construction of Rota-Baxter operators from solutions
of the opposite form of the \nybe in
Eq.~(\mref{eq:nhybe-opp}) can be found in~\mcite{EF}.

Taking $\lambda=\mu=0$ in Theorem~\mref{cor:Frobenius}, we obtain the following result.  Note that in this case, $P_r^t=-P_r$.

\begin{cor}
\cite[Corollary 3.17]{BGN1} A skew-symmetric $r\in A\otimes
A$ is a solution of the \aybe in Eq.~\meqref{eq:aybe}
if and only if the linear map $P_r$ defined by Eq.~\meqref{eq:Pr} is a Rota-Baxter operator of
weight zero.
\end{cor}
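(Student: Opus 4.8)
The plan is to obtain this Corollary as the special case $\lambda=\mu=0$ of Theorem~\mref{cor:Frobenius}, so the whole task reduces to matching the hypotheses and conclusions under this double specialization. First I would handle the equation itself: setting $\mu=0$ makes the right-hand side $\mu r_{13}$ of the \nybe in Eq.~\meqref{eq:nhybe} vanish, so a solution of the \nybe is exactly a solution of the \aybe in Eq.~\meqref{eq:aybe}. Consequently statement~\meqref{frob1} of Theorem~\mref{cor:Frobenius} becomes the assertion that $r$ solves the \aybe.

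Next I would translate the hypothesis on the \tsymm. Putting $\lambda=\mu=0$ in Eq.~\meqref{eq:phi-inv} turns the prescribed value of the \tsymm into $\sr=r+\sigma(r)=-\lambda\phi=0$, i.e.\ $\sigma(r)=-r$, which is precisely the condition that $r$ be skew-symmetric. Thus the standing hypothesis of Theorem~\mref{cor:Frobenius} reduces to the skew-symmetry assumption of the Corollary. On the operator side, Eq.~\meqref{eq:rbo} shows that a Rota-Baxter operator of weight $\lambda=0$ is simply a Rota-Baxter operator of weight zero, so statement~\meqref{frob2} becomes the assertion that $P_r$ is a Rota-Baxter operator of weight zero. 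With these identifications, the equivalence (\meqref{frob1} $\Longleftrightarrow$ \meqref{frob2}) of Theorem~\mref{cor:Frobenius} delivers the desired ``if and only if'' directly.

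Finally, I would record the consistency of the parenthetical note $P_r^t=-P_r$: the identity $P_r^t(x)=-P_r(x)+\sr^\sharp(\phi^\sharp)^{-1}(x)+\mu\frakB(x,{\bf 1}){\bf 1}$ derived in the proof of Theorem~\mref{cor:RBsystem} collapses to $P_r^t=-P_r$ once $\sr=0$ and $\mu=0$. Since the statement is a clean specialization of an already-established theorem, there is no genuine obstacle; the only point needing a moment's care is checking that after setting $\mu=0$ no residual $\mu({\bf 1}\otimes{\bf 1})$ term survives in the \tsymm, so that skew-symmetry of $r$ is literally equivalent to $\sr=0$. This is immediate from Definition~\mref{de:tsymm}, and the proof is therefore a one-line appeal to Theorem~\mref{cor:Frobenius}.
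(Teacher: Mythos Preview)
Your proposal is correct and matches the paper's approach exactly: the paper derives the Corollary by the one-line remark ``Taking $\lambda=\mu=0$ in Theorem~\mref{cor:Frobenius}'' together with the observation $P_r^t=-P_r$, which is precisely the specialization you carry out in detail.
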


\begin{ex}\mlabel{ex:fav}
Let $(A,\apr )=(\End_{\bfk} (V),\apr )=(M_n(\bfk),\apr )$ be the matrix
algebra, where $n=\dim V$. It is a Frobenius algebra with the
invariant bilinear form being the trace form, that is,
\begin{equation}
\frakB(x,y):={\rm Tr} (x\apr y), \;\;\forall x,y\in A.\mlabel{eq:trace}
\end{equation}
Take a basis $\{e_1,\cdots,e_n\}$ of $A$ such that
$\frakB(e_i,e_j)=\delta_{ij}$. Let
$$\phi=\sum_i e_i\otimes e_i.$$
Therefore Eq.~(\mref{eq:phi}) holds. Moreover, since
$\End_{\bfk}(V)\otimes \End_{\bfk}(V)\cong\End_{\bfk}(V\otimes
V)$, it is known that $\phi$ is the flip map $\sigma$ on
$V\otimes V$.

Let $r=\sum_ia_i\otimes b_i\in A\otimes A$. Then
$$P_r(x)=r^\sharp {\phi^\sharp}^{-1}(x)=\sum_i \langle
{\phi^\sharp}^{-1}(x), a_i\rangle
b_i=\sum_i\frakB(x,a_i)b_i=\sum_i{\rm Tr}(x\apr a_i)b_i.$$ Similarly,
$P^t_r(x)=\sum_i{\rm Tr}(x\apr b_i)a_i$. Suppose that
$$r+\sigma(r)=-\lambda\sigma+\mu ({\bf 1}\otimes {\bf 1})=-\lambda \phi+\mu
({\bf 1}\otimes {\bf 1}).$$
If $r$ satisfies Eq.~(\mref{eq:nhybe}),
then both $P_r$ and $P^t_r$ are Rota-Baxter operators of weight
$\lambda$. This is exactly the example given in \mcite{OP}.
\end{ex}

\begin{ex} \mlabel{ex:fav-2} We can be more explicit with Example~\mref{ex:fav} when $n=2$. Let $E_{i j}\in M_{2}(\bfk)$, $1\leq i,
j\leq 2$, be the matrix whose $(i,j)$-entry is $1$ and other
entries are zero. Now the matrix algebra $A=M_{2}(\mathbb{C})$ is
a Frobenius algebra with the invariant bilinear form $\frak B$
given by Eq.~(\mref{eq:trace}). An orthogonal basis with respect to the form is
$$e_1=\frac{1}{\sqrt 2}(E_{11}+E_{22}),e_2=\frac{1}{\sqrt
    2}(E_{11}-E_{22}),e_3=\frac{1}{\sqrt 2}(E_{12}+E_{21}),
e_4=\frac{1}{\sqrt {-2}}(E_{12}-E_{21}).$$
Hence the $\phi$ in Example~\mref{ex:fav}
is
\begin{eqnarray*} \phi&=&e_1\otimes e_1+e_2\otimes
    e_2+e_3\otimes e_3+e_4\otimes e_4\\&=& E_{11}\ot E_{11}+E_{22}\ot
    E_{22}+E_{12} \ot E_{21}+E_{21} \ot E_{12}.\end{eqnarray*}

Note that the unit ${\bf 1}$ in $M_{2}(\mathbb{C})$ is
$E_{11}+E_{22}$. Then $${\bf 1}\ot {\bf 1}=E_{11}\ot
E_{11}+E_{11} \ot E_{22}+E_{22}\ot E_{11}+E_{22}\ot E_{22}.$$ On
the other hand, by a direct calculation, we find that
$r = E_{12}\ot E_{21}-E_{11}\ot E_{22}$
is a solution of the \nybe with  $\mu=-1$ in
$M_{2}(\mathbb{C})$. Then we have
\begin{equation*}
r +\sigma (r)=E_{12}\ot E_{21}-E_{11}\ot E_{22}+E_{21}\ot
E_{12}-E_{22}\ot E_{11}=\phi- {\bf 1}\ot {\bf 1}.
\end{equation*}
Hence by Theorem~\mref{cor:Frobenius}, we have a Rota-Baxter
operator ${P}_r$ of weight $-1$ determined by
\begin{equation}\notag
{P}_{r}(E_{11})=-E_{22}, P_{r}(E_{21})=E_{21},
P_{r}(E_{12})=P_{r}(E_{22})=0.\mlabel{eq:weight-1}
\end{equation}
\end{ex}

\subsection{From $\calo$-operators and dendriform algebras to \nybe on semi-direct product algebras}

We now show that $\calo$-operators of weight zero and dendriform
algebras can give rise to
solutions of the \nybe in some semi-direct product algebras.
We first generalize one direction of
Theorem~\mref{cor:Frobenius} by relaxing the condition that the
\tsymm of $r$ is a multiple of a nondegenerate invariant tensor
giving by a symmetric Frobenius algebra.

\begin{pro}\mlabel{cor:conRB}
Let $(A,\apr,{\bf 1})$ be a unital $\bfk$-algebra. Let $s\in A\otimes A$
be symmetric and invariant. Let $P:A\rightarrow A$ be a linear map
satisfying
\begin{equation}\notag
s^\sharp P^\ast (a^\ast )+Ps^\sharp(a^\ast )=-\lambda s^\sharp
(a^\ast )+\mu\langle a^\ast ,{\bf 1}\rangle {\bf 1},\;\;\forall a^\ast \in
A^\ast ,
\mlabel{eq:opform5}
\end{equation}
where $P^\ast $ is the linear dual of $P$.
Let $r_1$ and $r_2$ be defined by
$r_1^\sharp=s^\sharp P^\ast , r_2^\sharp=Ps^\sharp$.
Explicitly, setting $s=\sum_ia_i\otimes b_i$, then
\begin{equation} \mlabel{eq:r12}
r_1:=\sum_iP(a_i)\otimes b_i,\;\;r_2:=\sum_ia_i\otimes P(b_i).
\end{equation}
If $P$ is a Rota-Baxter operator of weight $\lambda$, then $r_1$
and $r_2$ are \inv solutions of the \nybe in $A$.

Conversely, suppose that $s$ is nondegenerate. Let $r\in
A\otimes A$ satisfy
\begin{equation}\notag
r+\sigma(r)=-\lambda s+\mu ({\bf 1}\otimes {\bf 1}).
\end{equation}
Let $P_r, P_r^t:A\rightarrow A$ be the linear maps defined
respectively by
\begin{equation}\notag
P_r(x):=r^\sharp{s^\sharp}^{-1}(x),\;\;P^t_r(x):={r^t}^\sharp{s^\sharp}^{-1}(x),\;\;\forall
x\in A.
\end{equation}
If $r$ is a solution of the \nybe, then $P_r$
and $P_r^t$ are Rota-Baxter operators of weight $\lambda$.
\end{pro}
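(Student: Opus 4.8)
The plan is to handle the two directions separately: the converse reduces almost immediately to Theorem~\mref{cor:Frobenius}, while the forward direction is verified by a short computation that collapses the relevant $\calo$-operator condition onto the Rota--Baxter identity.

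First I would record the book-keeping identities relating $r_1,r_2$ to $P$ and to $s=\sum_i a_i\otimes b_i$. Writing $r_1=\sum_i P(a_i)\otimes b_i$ and $r_2=\sum_i a_i\otimes P(b_i)$ as in Eq.~\meqref{eq:r12}, the symmetry of $s$ yields at once $r_2=\sigma(r_1)$ together with
$$r_1^\sharp=s^\sharp P^\ast,\qquad r_1^{t\sharp}=Ps^\sharp=r_2^\sharp,\qquad r_2^{t\sharp}=s^\sharp P^\ast=r_1^\sharp.$$
Substituting these into the extended symmetrizer and invoking the hypothesis $s^\sharp P^\ast(a^\ast)+Ps^\sharp(a^\ast)=-\lambda s^\sharp(a^\ast)+\mu\langle a^\ast,\bfone\rangle\bfone$, I obtain $\sr_1^\sharp=\sr_2^\sharp=-\lambda s^\sharp$, that is, $\sr_1=\sr_2=-\lambda s$. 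Since $s$ is invariant, so is $-\lambda s$; hence both $r_1$ and $r_2$ are \inv, and the twisting maps appearing in Theorem~\mref{pro:iff} are $-\sr_1^\sharp=-\sr_2^\sharp=\lambda s^\sharp$.

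For the forward direction I would apply Theorem~\mref{pro:iff} to $r_2$: it suffices to show that $r_2^\sharp=Ps^\sharp$ is an $\calo$-operator \rwith $\lambda s^\sharp$ associated to $(A^\ast,R^\ast,L^\ast)$, i.e.
$$r_2^\sharp(a^\ast)\apr r_2^\sharp(b^\ast)=r_2^\sharp(R^\ast(r_2^\sharp(a^\ast))b^\ast)+r_2^\sharp(a^\ast L^\ast(r_2^\sharp(b^\ast)))+\lambda\,r_2^\sharp(a^\ast L^\ast(s^\sharp(b^\ast))).$$
Setting $x=s^\sharp(a^\ast)$, $y=s^\sharp(b^\ast)$ and using the invariance of $s$ in the form of Lemma~\mref{le:syin} (so that $s^\sharp(R^\ast(z)b^\ast)=z\apr s^\sharp(b^\ast)$ and $s^\sharp(a^\ast L^\ast(z))=s^\sharp(a^\ast)\apr z$), the three right-hand terms become $P(P(x)\apr y)$, $P(x\apr P(y))$ and $\lambda P(x\apr y)$, while the left-hand side is $P(x)\apr P(y)$. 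The required identity is thus exactly the Rota--Baxter relation of weight $\lambda$ (Eq.~\meqref{eq:rbo}) for $P$, which holds by hypothesis; so $r_2$ solves the \nybe. For $r_1$, since $r_2=\sigma(r_1)$, Lemma~\mref{lem:equi} shows $r_1$ satisfies the opposite equation~\meqref{eq:nhybe-opp}, and because $\sr_1$ is invariant, Corollary~\mref{cor:sigma} upgrades this to $r_1$ being a solution of the \nybe.

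For the converse, $s$ is now assumed nondegenerate; being also symmetric and invariant, Lemma~\mref{lem:Frobenius} makes the bilinear form corresponding to $s$ via Eq.~\meqref{eq:bij} a nondegenerate symmetric invariant form, so $A$ is a unital symmetric Frobenius algebra with $\phi=s$ and $\phi^\sharp=s^\sharp$. The hypothesis $r+\sigma(r)=-\lambda s+\mu(\bfone\otimes\bfone)$ is precisely $\sr=-\lambda s$, matching Eq.~\meqref{eq:phi-inv}, so Theorem~\mref{cor:Frobenius} applies directly and gives that when $r$ is a solution, $P_r=r^\sharp(s^\sharp)^{-1}$ and $P_r^t=r^{t\sharp}(s^\sharp)^{-1}$ are Rota--Baxter operators of weight $\lambda$. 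I expect the main obstacle to be the organizational first step: correctly computing $r_1^\sharp,r_1^{t\sharp},r_2^\sharp,r_2^{t\sharp}$ and confirming that the hypothesis on $P$ is exactly what forces the invariant value $\sr_1=\sr_2=-\lambda s$, since this is what aligns the twisting map of Theorem~\mref{pro:iff} with $\lambda s^\sharp$ and makes the collapse to the Rota--Baxter axiom possible. Once that alignment is secured, both directions are short.
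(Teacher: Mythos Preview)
Your argument is correct and follows the same strategy as the paper: identify $r_2=\sigma(r_1)$, compute the \tsymm $\sr_1=\sr_2=-\lambda s$, and then reduce the $\calo$-operator condition of Theorem~\mref{pro:iff} (equivalently Theorem~\mref{thm:key}) to the Rota--Baxter identity via the invariance of $s$; the converse is handled, in both, by appeal to Theorem~\mref{cor:Frobenius}. Your choice to work with $r_2^\sharp=Ps^\sharp$ first (rather than $r_1^\sharp=s^\sharp P^\ast$, as the paper does) is a mild streamlining: it lets Lemma~\mref{le:syin} collapse the terms to $P(P(x)\apr y)$, $P(x\apr P(y))$, $\lambda P(x\apr y)$ directly, avoiding the paper's longer pairing computation against an auxiliary $c^\ast$; and your passage from $r_2$ to $r_1$ via Lemma~\mref{lem:equi} and Corollary~\mref{cor:sigma} is an equally valid alternative to the paper's use of the equivalence in Theorem~\mref{thm:key}.
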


\begin{proof} In fact, we have $r_2^\sharp={r_1^t}^\sharp$ since
$$\langle {r_1^t}^\sharp (a^\ast ), b^\ast \rangle=\langle s^\sharp
P^\ast (b^\ast ), a^\ast \rangle=\langle s^\sharp (a^\ast ), P^\ast (b^\ast )\rangle=
\langle Ps^\sharp (a^\ast ), b^\ast \rangle=\langle r_2^\sharp(a^\ast ),
b^\ast \rangle,\;\;\forall a^\ast ,b^\ast \in A^\ast .$$
Hence $r_2=\sigma(r_1)$. For any $a^\ast ,b^\ast \in A^\ast $, we have
\begin{eqnarray*}
&&\langle r_1+\sigma (r_1)+\lambda s-\mu ({\bf 1}\otimes {\bf 1}),
a^\ast \otimes b^\ast \rangle\\ &&=\langle s^\sharp
P^\ast (a^\ast ),b^\ast \rangle+\langle s^\sharp P^\ast  (b^\ast ), a^\ast \rangle
+\lambda \langle s^\sharp (a^\ast ),
b^\ast \rangle -\mu \langle {\bf 1}, a^\ast \rangle\langle {\bf 1},b^\ast \rangle\\
&&=\langle s^\sharp P^\ast (a^\ast )+Ps^\sharp(a^\ast )+\lambda s^\sharp
(a^\ast )-\mu\langle a^\ast ,{\bf 1}\rangle {\bf 1},b^\ast \rangle=0.
\end{eqnarray*}
Hence $ r_1+\sigma (r_1)+\lambda s-\mu ({\bf 1}\otimes {\bf
1})=0$. For any $a^\ast ,b^\ast ,c^\ast \in A^\ast $, we have
{\small
\begin{eqnarray*} \langle r_1^\sharp
(a^\ast )\apr r_1^\sharp(b^\ast ),c^\ast \rangle &=& \langle s^\sharp
P^\ast (a^\ast )\apr s^\sharp P^\ast (b^\ast ), c^\ast \rangle=
\langle s^\sharp P^\ast (b^\ast ), c^\ast L^\ast (s^\sharp P^\ast (a^\ast ))\rangle\\
&=&\langle b^\ast , P(s^\sharp (c^\ast )\apr s^\sharp P^\ast (a^\ast ))\rangle\\
&=&\langle b^\ast , -P(s^\sharp (c^\ast )\apr P(s^\sharp(a^\ast )))\rangle+\langle b^\ast , P(-\lambda s^\sharp(c^\ast )\apr s^\sharp(a^\ast )+\mu\langle {\bf 1},a^\ast \rangle s^\sharp(c^\ast ))\rangle,\\
\langle r_1^\sharp(a^\ast L^\ast (r_1^\sharp(b^\ast ))), c^\ast \rangle&=&\langle s^\sharp P^\ast (a^\ast L^\ast (s^\sharp P^\ast (b^\ast ))),c^\ast \rangle
=\langle a^\ast , s^\sharp P^\ast (b^\ast )\apr P(s^\sharp(c^\ast ))\rangle\\&=&\langle a^\ast , s^\sharp(P^\ast (b^\ast )L^\ast P(s^\sharp(c^\ast )))\rangle
=\langle b^\ast , P(P(s^\sharp(c^\ast ))\apr s^\sharp(a^\ast ))\rangle,\\
\langle r_1^\sharp(R^\ast (r_1^\sharp(a^\ast ))b^\ast ),c^\ast \rangle&=&\langle s^\sharp P^\ast (R^\ast (s^\sharp P^\ast (a^\ast ))b^\ast ),c^\ast \rangle
=\langle R^\ast (s^\sharp P^\ast (a^\ast ))b^\ast , P(s^\sharp (c^\ast )\rangle\\
&=&\langle b^\ast ,P(s^\sharp(c^\ast ))\apr s^\sharp P^\ast (a^\ast )\rangle\\
&=&\langle b^\ast , -P(s^\sharp (c^\ast ))\apr P(s^\sharp(a^\ast ))\rangle+\langle b^\ast , -\lambda P(s^\sharp (c^\ast ))\apr s^\sharp(a^\ast )\\
&&+\mu \langle {\bf 1},a^\ast \rangle P(s^\sharp (c^\ast ))\rangle,\\
\langle \lambda r_1^\sharp(a^\ast L\apr (s^\sharp(b^\ast ))),c^\ast \rangle&=&\langle \lambda s^\sharp P^\ast (a^\ast L^\ast (s^\sharp (b^\ast ))), c^\ast \rangle=\langle a^\ast ,
\lambda s^\sharp (b^\ast )\apr Ps^\sharp(c^\ast )\rangle\\&=&\langle a^\ast ,\lambda s^\sharp (b^\ast L^\ast (Ps^\sharp(c^\ast )))\rangle=\langle b^\ast , \lambda P(s^\sharp (c^\ast ))\apr s^\sharp(a^\ast )\rangle.
\end{eqnarray*}}
Hence if $P$ is a Rota-Baxter operator of weight $\lambda$, then
$r_1^\sharp$ is an $\mathcal O$-operator associated to the
$A$-bimodule $\bfk$-algebra $(A^\ast ,\rpr, R^\ast ,L^\ast )$,
where $\rpr$ is defined from $-\lambda s$. Hence $r_1$ is a
solution of the \nybe by Theorem~\mref{thm:key}. By
Theorem~\mref{thm:key} again, $r_2$ is also a solution of the
\nybe since $r_2^\sharp=r_1^{t\sharp}=\sigma(r_1)^\sharp$.

If $s$ is nondegenerate, then from the above proof, it is obvious
that the converse is true. Alternatively, note that when $s$ is nondegenerate, symmetric
and invariant, then it corresponds to a nondegenerate, symmetric
and invariant bilinear form $\frak B$ by
Lemma~\mref{lem:Frobenius} through Eq.~(\ref{eq:phi}) such that
$(A,\frak B)$ is a Frobenius algebra. Then the conclusion follows from Theorem~\mref{cor:Frobenius}.
\end{proof}

\begin{rmk}
When $\mu=0$, the tensor $r_1$ in Eq.~\eqref{eq:r12} recovers a construction in \mcite{EF}.
\end{rmk}

In the rest of this subsection, we provide \inv solutions of the \nybe
 in semi-direct product
algebras from $\calo$-operators of weight zero and dendriform
algebras
by applying Proposition~\mref{cor:conRB}. We first
supply more background.

Let $(A,\apr)$ be a $\bfk$-algebra and $(V,l,r)$ be an
$A$-bimodule. Let $(V^\ast , r^\ast , l^\ast )$ be the dual
$A$-bimodule. Denote the semi-direct  product
algebras
$$\widehat
A:=A\ltimes_{l,r}V, \quad \mathcal
A:=A\ltimes_{r^\ast ,l^\ast }V^\ast.$$ Identify a linear map
 $\beta:V\rightarrow A$  with an element in  $\mathcal
A \otimes \mathcal A$ by the injective map
$$\Hom_\bfk (V, A)\cong A \otimes V^\ast \hookrightarrow \mathcal
A \otimes \mathcal A.$$

\begin{pro} {\rm (\mcite{BGN0})}\mlabel{pro:semi-direct}
Let $A$ be a $\bfk$-algebra and $(V,\ell,r)$ be an $A$-bimodule. Let
$\alpha: V\rightarrow A$ be a linear map. Then $\alpha$ is an
$\mathcal O$-operator of weight zero if and only if the linear map
\begin{equation}\mlabel{eq:alpha}
\widehat\alpha(x,u):=(\alpha(u),-\lambda u),\;\;\forall x\in A, u\in V,
\end{equation}
is a Rota-Baxter operator of weight $\lambda$ on the algebra $\widehat
A$.
\end{pro}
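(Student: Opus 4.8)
The plan is to verify the Rota-Baxter identity of weight $\lambda$ for $\widehat\alpha$ by a direct computation on the semi-direct product $\widehat A=A\ltimes_{\ell,r}V$ and to observe that it decouples into the $\mathcal O$-operator identity together with a relation that holds automatically. First I would record the multiplication on $\widehat A$ in pair notation: $(x,u)\apr(y,v)=(x\apr y,\ \ell(x)v+ur(y))$ for $x,y\in A$ and $u,v\in V$. The structural feature that drives the argument is that $\widehat\alpha(x,u)=(\alpha(u),-\lambda u)$ forgets the $A$-component of its input and depends only on the $V$-component; hence applying $\widehat\alpha$ to any element of $\widehat A$ amounts to reading off its $V$-component and feeding it to $\alpha$.

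With this in hand, I would expand both sides of
\[
\widehat\alpha(X)\apr\widehat\alpha(Y)=\widehat\alpha(\widehat\alpha(X)\apr Y)+\widehat\alpha(X\apr\widehat\alpha(Y))+\lambda\widehat\alpha(X\apr Y)
\]
for $X=(x,u)$ and $Y=(y,v)$, keeping the $A$-component and the $V$-component separate throughout. The left-hand side evaluates to $(\alpha(u)\apr\alpha(v),\ -\lambda(\ell(\alpha(u))v+ur(\alpha(v))))$. For the right-hand side I would first compute the three products $\widehat\alpha(X)\apr Y$, $X\apr\widehat\alpha(Y)$ and $X\apr Y$, then apply $\widehat\alpha$ to each (that is, extract the $V$-component) and sum, using linearity of $\alpha$.

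The payoff is a pair of clean cancellations. In the $A$-component, the terms $\pm\lambda\alpha(\ell(x)v)$ and $\pm\lambda\alpha(ur(y))$ cancel in pairs, leaving exactly $\alpha(\ell(\alpha(u))v)+\alpha(ur(\alpha(v)))$; matching this against the $A$-component $\alpha(u)\apr\alpha(v)$ of the left-hand side yields precisely the weight-zero $\mathcal O$-operator identity. In the $V$-component, the quadratic terms $\pm\lambda^2 ur(y)$ and $\pm\lambda^2\ell(x)v$ cancel, and what survives is $-\lambda(\ell(\alpha(u))v+ur(\alpha(v)))$, which coincides identically with the $V$-component of the left-hand side, so this part imposes no condition on $\alpha$. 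Consequently the Rota-Baxter relation for $\widehat\alpha$ holds for all $X,Y$ if and only if the $\mathcal O$-operator relation holds for all $u,v$, which gives both implications at once. There is no real conceptual obstacle here; the only thing to be careful about is the bookkeeping — tracking the $x,y$-dependent terms to confirm they all cancel, and noticing that the $V$-component equation is an identity requiring nothing of $\alpha$.
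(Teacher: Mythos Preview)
Your argument is correct: the direct expansion in pair notation is exactly the right approach, and you have tracked the cancellations accurately in both components. The paper does not supply its own proof of this proposition; it is quoted from \cite{BGN0} and used as a black box, so there is nothing to compare against beyond noting that your computation is the standard one.
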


\begin{lem} {\rm (\mcite{BGN1})} \mlabel{lem:semi-direct} Let $(A,\apr )$ be a $\bfk$-algebra and $(V,l,r)$ be an $A$-bimodule.
Let $\beta:V\rightarrow A$ be a linear map. Then $\widetilde
\beta=\beta+\sigma(\beta)\in \mathcal A\otimes \mathcal A$ is
invariant if and only if $\beta$ is a {\bf balanced $A$-bimodule homomorphism}, that is,
\begin{equation}\mlabel{eq:alphabeta}
\beta(l(x)u)=x\apr \beta(v),\;\;\beta(ur(x))=\beta(u)\apr x,\;\;l(\beta(u))v=ur(\beta(v)),\;\;\forall
x\in A,  u,v\in V.
\end{equation}
\end{lem}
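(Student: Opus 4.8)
The plan is to write $\beta$ as an explicit tensor and unwind the invariance condition componentwise over the grading $\mathcal{A}\ot\mathcal{A}=(A\oplus V^\ast)^{\ot 2}$. Write $\beta=\sum_i a_i\ot u_i^\ast\in A\ot V^\ast\hookrightarrow\mathcal{A}\ot\mathcal{A}$, so that $\beta(v)=\sum_i\langle u_i^\ast,v\rangle a_i$, and hence $\widetilde\beta=\sum_i(a_i\ot u_i^\ast+u_i^\ast\ot a_i)$. Invariance of $\widetilde\beta$ means $(\id\ot L(X)-R(X)\ot\id)\widetilde\beta=0$ for every $X\in\mathcal{A}$, and by linearity it suffices to test $X=x\in A$ and $X=w^\ast\in V^\ast$ separately. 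Throughout I would use the semi-direct product rule in $\mathcal{A}=A\ltimes_{r^\ast,\ell^\ast}V^\ast$, namely $x\apr u^\ast=r^\ast(x)u^\ast$, $u^\ast\apr x=u^\ast\ell^\ast(x)$, and $u^\ast\apr w^\ast=0$, for $x\in A$ and $u^\ast,w^\ast\in V^\ast$.

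First I would take $X=x\in A$. A direct computation of $(\id\ot L(x)-R(x)\ot\id)\widetilde\beta$ produces terms lying only in the summands $A\ot V^\ast$ and $V^\ast\ot A$, which must therefore vanish independently. Pairing the $V^\ast$-slot of the $A\ot V^\ast$-component against an arbitrary $v\in V$ and using $\langle r^\ast(x)u_i^\ast,v\rangle=\langle u_i^\ast,vr(x)\rangle$ collapses that component to $\beta(vr(x))-\beta(v)\apr x$; likewise the $V^\ast\ot A$-component collapses to $\beta(\ell(x)v)-x\apr\beta(v)$ after applying $\langle u_i^\ast\ell^\ast(x),v\rangle=\langle u_i^\ast,\ell(x)v\rangle$. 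Thus testing against $X\in A$ is equivalent to the first two balanced conditions in Eq.~\meqref{eq:alphabeta}.

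Next I would take $X=w^\ast\in V^\ast$. Since the product of two elements of $V^\ast$ vanishes, two of the four terms drop out and $(\id\ot L(w^\ast)-R(w^\ast)\ot\id)\widetilde\beta$ lands entirely in $V^\ast\ot V^\ast$, equal to $\sum_i\big(u_i^\ast\ot(w^\ast\ell^\ast(a_i))-(r^\ast(a_i)w^\ast)\ot u_i^\ast\big)$. Pairing the two slots with arbitrary $v,v'\in V$ and again using the dual-module identities rewrites this expression as $\langle w^\ast,\ \ell(\beta(v))v'-vr(\beta(v'))\rangle$; as $w^\ast$ ranges over $V^\ast$ it vanishes precisely when $\ell(\beta(v))v'=vr(\beta(v'))$, which is the third balanced condition. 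Collecting the three cases yields the claimed equivalence.

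The argument is entirely routine and I do not expect any genuine obstacle; the only point requiring care is bookkeeping—reading off correctly which tensor slot each multiplication operator acts on, and keeping the dual-bimodule conventions $\langle u^\ast\ell^\ast(x),v\rangle=\langle u^\ast,\ell(x)v\rangle$ and $\langle r^\ast(x)u^\ast,v\rangle=\langle u^\ast,vr(x)\rangle$ straight while collapsing each homogeneous component to a statement about $\beta$. The one structural observation that makes the proof clean is that the grading of $\mathcal{A}\ot\mathcal{A}$ forces the $A$-test and the $V^\ast$-test to recover disjoint pieces of Eq.~\meqref{eq:alphabeta}, so no condition is obtained twice and none is missed.
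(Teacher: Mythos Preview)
Your argument is correct. The paper does not supply its own proof of this lemma---it is quoted from \mcite{BGN1}---so there is nothing to compare against; your direct computation, splitting the invariance condition over the grading $\mathcal{A}\otimes\mathcal{A}=(A\oplus V^\ast)^{\otimes 2}$ and testing separately against $X\in A$ and $X\in V^\ast$, is exactly the natural route and recovers the three balanced conditions cleanly.
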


\begin{thm}\mlabel{thm:cons} Let $(A,\apr ,{\bf 1})$ be a unital $\bfk$-algebra and $(V,\ell,r)$ be an
$A$-bimodule. Assume that
$\alpha:V\rightarrow A$ is an $\mathcal O$-operator of weight zero
and $\beta:V^\ast \rightarrow A$ is a balanced $A$-bimodule homomorphism.
Let $\widehat \alpha$ be given by Eq.~\meqref{eq:alpha}
and $\widetilde \beta:=\beta+\sigma(\beta)\in \widehat A\otimes \widehat A$.
Let $r_1, r_2\in \widehat{A}\ot \widehat{A}$ be defined by
$$r_1^\sharp:={\widetilde \beta}^\sharp {\widehat\alpha}^\ast,\quad
r_2^\sharp:=\widehat \alpha {\widetilde \beta}^\sharp.$$
If
$\alpha$ and $\beta$ satisfy
\begin{equation}\notag
\beta\alpha^\ast (x^\ast )+\alpha\beta^\ast (x^\ast )=\mu\langle x^\ast ,{\bf
1}\rangle {\bf 1},\;\;\forall x^\ast \in A^\ast,
\end{equation}
then $r_1$ and $r_2$ are \inv solutions of the \nybe in $\widehat A$, with $s=\widetilde
\beta$.
\end{thm}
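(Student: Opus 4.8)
The plan is to deduce the statement from Proposition~\mref{cor:conRB}, applied to the unital $\bfk$-algebra $\widehat A=A\ltimes_{\ell,r}V$ with the symmetric invariant tensor there taken to be $s=\widetilde\beta$, the linear operator taken to be $P=\widehat\alpha$, and the weight taken to be the same $\lambda$ appearing in Eq.~\meqref{eq:alpha}. Once the hypotheses of Proposition~\mref{cor:conRB} are verified for these choices, its conclusion gives exactly that $r_1$ (with $r_1^\sharp=\widetilde\beta^\sharp\widehat\alpha^\ast=s^\sharp P^\ast$) and $r_2$ (with $r_2^\sharp=\widehat\alpha\,\widetilde\beta^\sharp=Ps^\sharp$) are \inv solutions of the \nybe in $\widehat A$, with $s=\widetilde\beta$, as asserted.

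First I would assemble the two structural inputs. Since $\alpha$ is an $\calo$-operator of weight zero, Proposition~\mref{pro:semi-direct} shows that $\widehat\alpha$ is a Rota-Baxter operator of weight $\lambda$ on $\widehat A$. For the tensor side, I would apply Lemma~\mref{lem:semi-direct} with the roles reinterpreted: here $\beta:V^\ast\to A$ is attached to the $A$-bimodule $(V^\ast,r^\ast,\ell^\ast)$, whose dual bimodule is $(V,\ell,r)$ under $V^{\ast\ast}\cong V$, so the semi-direct product relevant to the lemma is precisely $\widehat A=A\ltimes_{\ell,r}V$ and $\widetilde\beta=\beta+\sigma(\beta)$ is read as an element of $\widehat A\otimes\widehat A$. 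Because $\beta$ is a balanced $A$-bimodule homomorphism (Eq.~\meqref{eq:alphabeta}), Lemma~\mref{lem:semi-direct} yields that $\widetilde\beta$ is invariant, and it is symmetric by construction. This supplies the symmetric invariant $s=\widetilde\beta$ demanded by Proposition~\mref{cor:conRB}.

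The computational heart, which I expect to be the main obstacle, is to verify the compatibility identity
\begin{equation}\notag
\widetilde\beta^\sharp\widehat\alpha^\ast(\xi)+\widehat\alpha\,\widetilde\beta^\sharp(\xi)
=-\lambda\,\widetilde\beta^\sharp(\xi)+\mu\langle \xi,\widehat{\bf 1}\rangle\,\widehat{\bf 1},\qquad\forall\,\xi\in\widehat A^\ast,
\end{equation}
which is the hypothesis of Proposition~\mref{cor:conRB} for $s=\widetilde\beta$ and $P=\widehat\alpha$. To do this I would decompose everything along $\widehat A=A\oplus V$ and $\widehat A^\ast=A^\ast\oplus V^\ast$. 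Writing $\xi=(x^\ast,v^\ast)$, one computes directly from Eq.~\meqref{eq:alpha} that $\widehat\alpha^\ast(x^\ast,v^\ast)=(0,\alpha^\ast(x^\ast)-\lambda v^\ast)$; that the unit of $\widehat A$ is $\widehat{\bf 1}=({\bf 1},0)$ (using $\ell({\bf 1})=r({\bf 1})=\id$), so that $\langle\xi,\widehat{\bf 1}\rangle=\langle x^\ast,{\bf 1}\rangle$; and that under the identification of $\beta$ with a tensor in $A\otimes V$ one has $\widetilde\beta^\sharp(x^\ast,v^\ast)=\big(\beta(v^\ast),\beta^\ast(x^\ast)\big)$. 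Substituting these into the two sides, the $V$-components agree as $-\lambda\beta^\ast(x^\ast)$, while in the $A$-components the terms $-\lambda\beta(v^\ast)$ cancel, reducing the identity to $\beta\alpha^\ast(x^\ast)+\alpha\beta^\ast(x^\ast)=\mu\langle x^\ast,{\bf 1}\rangle{\bf 1}$, which is exactly the standing hypothesis. The delicate points are keeping the several dualities straight (the map $\alpha^\ast$, the map $\beta^\ast$, and which leg of $\widetilde\beta$ is contracted against the $A^\ast$- versus $V^\ast$-part of $\xi$) and reading off the $A\oplus V$ components of $\widehat\alpha^\ast$ and $\widetilde\beta^\sharp$ without sign or placement errors.

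With this identity established and $\widehat\alpha$ known to be a Rota-Baxter operator of weight $\lambda$, Proposition~\mref{cor:conRB} applies verbatim and produces the two \inv solutions $r_1,r_2$ of the \nybe in $\widehat A$ with $s=\widetilde\beta$, completing the argument.
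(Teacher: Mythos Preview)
Your proposal is correct and follows essentially the same route as the paper's proof: use Proposition~\mref{pro:semi-direct} to make $\widehat\alpha$ a Rota-Baxter operator of weight $\lambda$, use Lemma~\mref{lem:semi-direct} (applied to the bimodule $(V^\ast,r^\ast,\ell^\ast)$, whose dual recovers $\widehat A$) to see that $\widetilde\beta$ is symmetric and invariant, then compute $\widehat\alpha^\ast(x^\ast,v^\ast)=(0,\alpha^\ast(x^\ast)-\lambda v^\ast)$ and $\widetilde\beta^\sharp(x^\ast,v^\ast)=(\beta(v^\ast),\beta^\ast(x^\ast))$ and reduce the compatibility identity to the hypothesis $\beta\alpha^\ast(x^\ast)+\alpha\beta^\ast(x^\ast)=\mu\langle x^\ast,{\bf 1}\rangle{\bf 1}$, so that Proposition~\mref{cor:conRB} applies. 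The paper carries out exactly these steps with the same explicit formulas.
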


\begin{proof} By Proposition~\mref{pro:semi-direct}, $\widehat
\alpha$ is a Rota-Baxter operator of weight $\lambda$ of $\widehat A$.
By Lemma~\mref{lem:semi-direct}, $\widetilde \beta\in \widehat A\otimes
\widehat A$ is invariant. Moreover, we have
$${\widehat \alpha}^\ast (x^\ast ,u^\ast )=(0,\alpha^\ast (x^\ast )-\lambda u^\ast ),\;\; {\widetilde \beta}^\sharp
(x^\ast ,u^\ast )=(\beta(u^\ast ),\beta^\ast (x^\ast )),\;\;\forall x^\ast \in A^\ast ,u^\ast \in
V^\ast .$$ Hence for any $x^\ast \in A^\ast ,u^\ast \in V$,  we have
\begin{eqnarray*}
&&{\widetilde \beta}^\sharp {\widehat \alpha}^\ast (x^\ast ,u^\ast )+{\widehat
\alpha}{\widetilde \beta}^\sharp(x^\ast ,u^\ast )+\lambda {\widetilde
\beta}^\sharp(x^\ast ,u^\ast ) -\mu\langle (x^\ast ,u^\ast ),({\bf 1},0)\rangle
({\bf 1},0)\\
&&=(\beta\alpha^\ast (x^\ast )-\lambda \beta
(u^\ast ),0)+(\alpha\beta^\ast (x^\ast ),-\lambda\beta^\ast (x^\ast ))+
\lambda(\beta(u^\ast ),\beta^\ast (x^\ast ))- (\mu\langle
x^\ast ,{\bf 1}\rangle {\bf 1},0)\\
&&=(\beta\alpha^\ast (x^\ast )+\alpha\beta^\ast (x^\ast )-\mu \langle x^\ast , {\bf
1}\rangle {\bf 1},0)=0.
\end{eqnarray*}
By Proposition~\mref{cor:conRB}, the desired result follows.
\end{proof}

\begin{cor}\mlabel{cor:conRB1}
Let $(A,{\bf 1})$ be a unital $\bfk$-algebra. Let $s\in A\otimes
A$ be symmetric and invariant. Let $P:A\rightarrow A$ be a linear
map satisfying
\begin{equation}\notag
s^\sharp P^\ast (a^\ast )+Ps^\sharp(a^\ast )=\mu\langle a^\ast , {\bf 1}\rangle
{\bf 1},\;\;\forall a^\ast \in A^\ast .
\end{equation}
Suppose that $P$ is a Rota-Baxter operator of weight zero.
\begin{enumerate}
\item Let $r_1, r_2\in A\ot A$ be defined by
$$r_1^\sharp:=s^\sharp P^\ast , \quad r_2^\sharp:=Ps^\sharp.$$
Then $r_1$ and $r_2$ are \inv solutions of the \nybe
in $A$ whose \tsymms are zero. \item Set $\widehat
A:=A\ltimes_{L,R}A$. Let $\widehat P$ be given by
Eq.~\meqref{eq:alpha} with $\widetilde
{s^\sharp}=s^\sharp+\sigma(s^\sharp)\in \widehat A\otimes \widehat
A$. Let $r_3, r_4\in \widehat{A}\ot \widehat{A}$ be defined by
$$r_3^\sharp:=\big({\widetilde {s^\sharp}}\big)^\sharp {\widehat
    P}^\ast ,\quad r_4^\sharp:=\widehat P \big({\widetilde {s^\sharp}}\big)^\sharp.$$
Then $r_3$ and $r_4$ are \inv solutions of the \nybe in
$\widehat A$ with $s=\widetilde {s^\sharp}$.
\end{enumerate}
\end{cor}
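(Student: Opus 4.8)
Both parts are specializations of results already proved, so the plan in each case is to locate the correct instance and check that the hypotheses transcribe under the chosen identifications; no genuinely new computation is needed.

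For part (a), I would observe that the hypothesis on $P$ is exactly the condition of Proposition~\mref{cor:conRB} with $\lambda=0$: the term $-\lambda s^\sharp(a^\ast)$ there disappears, leaving $s^\sharp P^\ast(a^\ast)+Ps^\sharp(a^\ast)=\mu\langle a^\ast,\bfone\rangle\bfone$, and a Rota-Baxter operator of weight zero is a Rota-Baxter operator of weight $\lambda$ with $\lambda=0$. Since $r_1^\sharp=s^\sharp P^\ast$ and $r_2^\sharp=Ps^\sharp$ match the tensors $r_1,r_2$ there verbatim, Proposition~\mref{cor:conRB} directly gives that $r_1$ and $r_2$ are \inv solutions of the \nybe in $A$. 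For the vanishing of the \tsymms, I would invoke the intermediate identity $r_1+\sigma(r_1)+\lambda s-\mu(\bfone\ot\bfone)=0$ derived inside the proof of Proposition~\mref{cor:conRB}; setting $\lambda=0$ yields $r_1+\sigma(r_1)-\mu(\bfone\ot\bfone)=0$, so the \tsymm of $r_1$ is zero, and likewise that of $r_2=\sigma(r_1)$.

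For part (b), the plan is to realize the statement as the instance of Theorem~\mref{thm:cons} in which $V=A$ carries the adjoint $A$-bimodule $(A,L,R)$, so that $V^\ast=A^\ast$ carries the dual bimodule $(A^\ast,R^\ast,L^\ast)$ and $\widehat A=A\ltimes_{L,R}A$. First I would take $\alpha=P$: as $P$ is a Rota-Baxter operator of weight zero, it is by definition an $\calo$-operator of weight zero associated to $(A,L,R)$, whence the associated operator of Eq.~\meqref{eq:alpha} is exactly $\widehat P$. Next I would set $\beta=s^\sharp:A^\ast\to A$; under the identification preceding Theorem~\mref{thm:cons}, the element of $\widehat A\ot\widehat A$ attached to $\beta$ is precisely the tensor denoted $\widetilde{s^\sharp}$, so $\widetilde\beta=\widetilde{s^\sharp}$ and the tensors $r_3,r_4$ coincide with $r_1,r_2$ in the notation there.

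The one step that is not purely formal is verifying that $\beta=s^\sharp$ is a balanced $A$-bimodule homomorphism in the sense of Eq.~\meqref{eq:alphabeta}. For the bimodule $(A^\ast,R^\ast,L^\ast)$ this requirement reads $s^\sharp(R^\ast(x)a^\ast)=x\apr s^\sharp(a^\ast)$, $s^\sharp(a^\ast L^\ast(x))=s^\sharp(a^\ast)\apr x$, and $R^\ast(s^\sharp(a^\ast))b^\ast=a^\ast L^\ast(s^\sharp(b^\ast))$, which are exactly conditions (c) and (b) of Lemma~\mref{le:syin}; since $s$ is symmetric and invariant, these hold, so $\beta$ is balanced. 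The remaining compatibility hypothesis $\beta\alpha^\ast(x^\ast)+\alpha\beta^\ast(x^\ast)=\mu\langle x^\ast,\bfone\rangle\bfone$ of Theorem~\mref{thm:cons} becomes $s^\sharp P^\ast+Ps^\sharp=\mu\langle\cdot,\bfone\rangle\bfone$, which is the standing assumption. Theorem~\mref{thm:cons} then yields that $r_3$ and $r_4$ are \inv solutions of the \nybe in $\widehat A$ with $s=\widetilde{s^\sharp}$. The main point to watch throughout is the bookkeeping of the two bimodule structures $(A,L,R)$ and $(A^\ast,R^\ast,L^\ast)$ together with the identification $\widetilde\beta=\widetilde{s^\sharp}$ inside $\widehat A\ot\widehat A$; once these are pinned down the proof is a direct appeal to the two cited results.
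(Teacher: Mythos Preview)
Your proposal is correct and follows essentially the same route as the paper: part~(a) is Proposition~\mref{cor:conRB} at $\lambda=0$ (your extra remark about the vanishing \tsymm via the identity $r_1+\sigma(r_1)+\lambda s-\mu(\bfone\ot\bfone)=0$ is a welcome clarification), and part~(b) is Theorem~\mref{thm:cons} with $(V,\ell,r)=(A,L,R)$, $\alpha=P$, $\beta=s^\sharp$. Your explicit appeal to Lemma~\mref{le:syin} to verify that $s^\sharp$ is a balanced $A$-bimodule homomorphism is exactly the check the paper leaves as a one-line note.
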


\begin{proof}
(a) follows from Proposition~\mref{cor:conRB} with $\lambda=0$.

(b) follows from Theorem~\mref{thm:cons} where
$(V,l,r)=(A,L,R)$ and $P=\alpha$, $\beta=s^\sharp$. Note that in
this case, if $s$ is invariant and symmetric, then $s^\sharp$
is a balanced $A$-module homomorphism, that is, $s^\sharp$ satisfies Eq.~(\mref{eq:alphabeta}).
\end{proof}

\begin{cor}\mlabel{thm:cons1} Let $(A,\apr, {\bf 1})$ be a unital $\bfk$-algebra. Set $\widehat A:=A\ltimes_{R^\ast ,L^\ast }A^\ast $. Assume that
$\beta:A\rightarrow A$ is a linear map satisfying
\begin{equation}\mlabel{eq:beta}
\beta(x\apr y)=\beta(x)\apr y=x\apr \beta(y),\;\;\forall x,y\in A.
\end{equation}
Let $\mR:A^\ast \rightarrow A$ be an $\mathcal O$-operator of weight
zero associated to $(A^\ast ,R^\ast ,L^\ast )$. Let $\widehat \mR$ be given by
Eq.~\meqref{eq:alpha} and $\widetilde
\beta=\beta+\sigma(\beta)\in \widehat A\otimes \widehat A$.
Let $r, r'\in \widehat{A}\ot \widehat{A}$ be defined by
$$r^\sharp:={\widetilde \beta}^\sharp {\widehat \mR}^\ast , \quad
r'^\sharp:=\widehat \mR {\widetilde \beta}^\sharp.$$
If $\mR$ and $\beta$ satisfy
\begin{equation}\notag
\beta \mR^\ast (x^\ast )+\mR\beta^\ast (x^\ast )=\mu\langle x^\ast ,{\bf 1}\rangle {\bf
1},\;\;\forall x^\ast \in A^\ast ,
\end{equation}
then $r$ and $r'$ are \inv solutions of the \nybe in $\widehat{A}$, when taking $s=\widetilde \beta$.
In particular, suppose
that $\beta=\id$. Then $\beta$ satisfies
Eq.~\meqref{eq:beta}. Suppose that
\begin{equation}\mlabel{eq:Rconno}\notag
\mR(x^\ast )+\mR^\ast (x^\ast )=\mu\langle x^\ast ,1\rangle1,\;\;\forall x^\ast \in A^\ast .
\end{equation}
\begin{enumerate}
\item
Let $r_1, r_2\in \widehat{A}\ot \widehat{A}$ be defined by
$$r_1^\sharp:={\widetilde \id}^\sharp {\widehat \mR}^\ast , \quad
r_2^\sharp:=\widehat \mR {\widetilde \id}^\sharp.$$
Then $r_1$ and $r_2$ are \inv solutions of the \nybe in $\widehat{A}$ with $s=\widetilde \id$.
\item Let $r_3,r_4\in A\ot A$ be defined by
$$r_3^\sharp:=\mR,\quad  r_4^\sharp:=\mR^\ast .$$
Then $r_3$ and $r_4$ are \inv solutions of the \nybe in $A$.
\end{enumerate}
\end{cor}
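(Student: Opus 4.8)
The plan is to obtain the general assertion as a direct specialization of Theorem~\ref{thm:cons}, then read off part~(a) by setting $\beta=\id$, while part~(b) is instead routed through Corollary~\ref{cor:converse}.

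For the main statement, I would apply Theorem~\ref{thm:cons} to the $A$-bimodule $(V,\ell,r)=(A^\ast,R^\ast,L^\ast)$, taking the weight-zero $\calo$-operator to be $\alpha=\mR$. Under the identification $A\cong A^{\ast\ast}$, the dual bimodule of $(A^\ast,R^\ast,L^\ast)$ is the adjoint bimodule $(A,L,R)$, so the map $\beta\colon A\to A$ in the corollary plays exactly the role of the balanced $A$-bimodule homomorphism $\beta\colon V^\ast\to A$ required by the theorem. The one point to verify is that Eq.~\eqref{eq:beta} coincides with the balanced condition Eq.~\eqref{eq:alphabeta} for the adjoint bimodule: since $L(x)u=x\apr u$ and $uR(x)=u\apr x$, the three identities of Eq.~\eqref{eq:alphabeta} read $\beta(x\apr u)=x\apr\beta(u)$, $\beta(u\apr x)=\beta(u)\apr x$ and $\beta(u)\apr v=u\apr\beta(v)$, which together are precisely the two equalities encoded in $\beta(x\apr y)=\beta(x)\apr y=x\apr\beta(y)$. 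As the compatibility hypothesis $\beta\mR^\ast(x^\ast)+\mR\beta^\ast(x^\ast)=\mu\langle x^\ast,{\bf 1}\rangle{\bf 1}$ matches the hypothesis of Theorem~\ref{thm:cons} with $\alpha=\mR$, the theorem yields at once that $r$ and $r'$ are \inv solutions of the \nybe in $\widehat A$ with $s=\widetilde\beta$.

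For the special case $\beta=\id$ I would first record that $\id$ visibly satisfies Eq.~\eqref{eq:beta}, and that with $\beta^\ast=\id$ the compatibility hypothesis collapses to the stated relation $\mR(x^\ast)+\mR^\ast(x^\ast)=\mu\langle x^\ast,{\bf 1}\rangle{\bf 1}$. Part~(a) is then nothing but the main statement applied with $\beta=\id$, giving that $r_1,r_2$ are \inv solutions in $\widehat A$ with $s=\widetilde{\id}$.

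Part~(b) is of a different nature, since $r_3,r_4$ lie in $A\otimes A$ rather than in $\widehat A\otimes\widehat A$, so Theorem~\ref{thm:cons} does not apply and I would invoke Corollary~\ref{cor:converse} instead, with the trivial choice $s^\sharp=0$ (whence $\circ=0$ and Eqs.~\eqref{eq:ss1} and \eqref{eq:sss} hold vacuously) and $P=\mR$. With $s^\sharp=0$ the constraint Eq.~\eqref{eq:Rcon} reduces exactly to the assumed relation $\mR(x^\ast)+\mR^\ast(x^\ast)=\mu\langle x^\ast,{\bf 1}\rangle{\bf 1}$, and $\mR$ is a weight-zero $\calo$-operator associated to $(A^\ast,R^\ast,L^\ast)$; hence clause~(c) of Corollary~\ref{cor:converse} shows that $r_3$, defined by $r_3^\sharp=\mR$, is an \inv solution in $A$. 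For $r_4$, the computation $\langle r_4^{t\sharp}(a^\ast),b^\ast\rangle=\langle\mR^\ast(b^\ast),a^\ast\rangle=\langle\mR(a^\ast),b^\ast\rangle$ gives $r_4^{t\sharp}=\mR$, so clause~(d) of the same corollary, again with $P=\mR$, shows that $r_4$ is an \inv solution. The only genuine obstacle I anticipate is bookkeeping rather than substance: correctly identifying the dual of $(A^\ast,R^\ast,L^\ast)$ as the adjoint bimodule so that Eq.~\eqref{eq:alphabeta} collapses to Eq.~\eqref{eq:beta}, together with the transposition identity $r_4^{t\sharp}=\mR$ needed to select the correct clause of Corollary~\ref{cor:converse}.
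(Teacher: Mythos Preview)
Your approach is correct and matches the paper's own proof essentially verbatim: the paper also specializes Theorem~\ref{thm:cons} to $(V,\ell,r)=(A^\ast,R^\ast,L^\ast)$ for the main statement and part~(a), noting that Eq.~\eqref{eq:alphabeta} reduces to Eq.~\eqref{eq:beta} in this setting, and then invokes Corollary~\ref{cor:converse} with trivial \tsymm for part~(b). Your added justifications (identifying the dual of $(A^\ast,R^\ast,L^\ast)$ as the adjoint bimodule, and the transposition computation $r_4^{t\sharp}=\mR$ to select clause~(d)) are correct elaborations of steps the paper leaves implicit.
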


\begin{proof}
The first half part follows from Theorem~\mref{thm:cons} by taking $(V,l,r):=(A^\ast ,R^\ast ,L^\ast )$. Note that in
this case, Eq.~(\mref{eq:alphabeta}) is exactly
Eq.~(\mref{eq:beta}).

(a) follows from the above proof in the case when $\beta=\id$.

(b) follows from Corollary~\mref{cor:converse} in the case
that the \tsymm is zero.
\end{proof}

We finally provide solutions of the \nybe from dendriform algebras.

\begin{defi} \mcite{Lo} Let $A$ be a vector space with two bilinear products denoted by $\prec$ and $\succ$. Then $(A, \prec, \succ)$ is called a {\bf dendriform algebra} if for all $a, b, c\in A$,
\small{
$$
 (a\prec b)\prec c=a\prec (b\prec c+b\succ c),
 (a\succ b)\prec c=a\succ (b\prec c),
 (a\prec b+a\succ b)\succ c=a\succ (b\succ c).
$$
}
\end{defi}

 Let $(A,\prec,\succ)$ be a dendriform algebra.
For any $a\in A$, let $L_{\prec}(a)$, $R_{\prec}(a)$ and
$L_{\succ}(a)$, $R_{\succ}(a)$ denote the left and right
multiplication operators on $(A,\prec)$ and $(A,\succ)$,
respectively.
 Furthermore, define linear maps
 $$R_{\prec}, L_{\succ}: A\to  \End_{\bfk}(A), \quad
 a\mapsto R_{\prec}(a),~a\mapsto L_{\succ}(a),\;\;\forall a\in A.
 $$

As is well-known, for a dendriform algebra $(A,\prec,\succ)$, the
multiplication
\begin{equation}\notag
a\star b:=a\prec b+a\succ b,\quad \forall a,b\in A, \mlabel{eq:5.41}\end{equation}
defines a $\bfk$-algebra $(A,\star )$, called the {\bf associated
algebra} of the dendriform algebra. Moreover, $(A,L_\succ,
R_\prec)$ is a bimodule of the algebra
$(A,\star )$~\mcite{Bai1,Lo}.

A {\bf unital dendriform algebra}~\mcite{Foi07} is a $\bfk$-module $A:=\bfk \bfone \oplus A^+$ such that $(A^+,\prec,\succ)$ is a dendriform algebra and the operations $\prec$ and $\succ$ are extended (partially) to $A$ by
$$ x\prec \bfone =\bfone \succ x= x, \quad x\succ \bfone =\bfone \prec x=0,
 \quad \forall x\in A^+.$$
Note that $\bfone \prec \bfone$ and $\bfone \succ \bfone$ are not
defined. Then $(A,\star ,\bfone)$ is a unital $\bfk$-algebra.

\begin{cor}
Let $(A,\prec,\succ, {\bf 1})$ be a unital dendriform algebra with
the unit {\bf 1}. Let $(A,\star )$ be the associated unital
$\bfk$-algebra with the unit $ {\bf 1}$. Suppose that there is a
linear map $\beta:A^\ast \rightarrow A$ satisfying
\begin{equation}\notag
\beta(R_\prec^\ast (x)y^\ast )=x\star \beta(y^\ast ),\;\beta(y^\ast L_\succ^\ast (x))=\beta(y^\ast )\star x,\;R_\prec^\ast (\beta(y^\ast ))z^\ast =y^\ast L_\succ^\ast (\beta(z^\ast )),\
\end{equation}
for any $ x\in A, y^\ast ,z^\ast \in A^\ast $. Set $\widehat
A=A\ltimes_{L_\succ,R_\prec}A$. Let $\widehat \id$ be given by
Eq.~\meqref{eq:alpha}, that is,
$$\widehat \id(x,y)=(y,-\lambda y),\;\;\forall x,y\in A,$$
 and $\widetilde
\beta=\beta+\sigma(\beta)\in \widehat A\otimes \widehat A$. If in
addition, $\beta$ satisfies
\begin{equation}\notag
\beta(x^\ast )+\beta^\ast (x^\ast )=\mu\langle x^\ast ,{\bf 1}\rangle {\bf
1}, \quad \forall x^\ast \in A^\ast ,
\end{equation}
then $r_1$ and $r_2$ defined by
$$r_1^\sharp:={\widetilde \beta}^\sharp {\widehat\id}^\ast ,\quad  r_2^\sharp:=\widehat
\id{\widetilde \beta}^\sharp$$ are \inv solutions of the \nybe in
$\widehat{A}$, with $s=\widetilde \beta$.
\end{cor}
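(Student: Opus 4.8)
The plan is to obtain this corollary as a direct specialization of Theorem~\mref{thm:cons}, applied to the $A$-bimodule coming from the dendriform structure. First I would take the associated unital algebra $(A,\star)$ as the base algebra and set the $A$-bimodule to be $(V,\ell,r):=(A,L_\succ,R_\prec)$, which is indeed a bimodule of $(A,\star)$ as recalled just before the statement. Its dual $A$-bimodule is then $(V^\ast,r^\ast,\ell^\ast)=(A^\ast,R_\prec^\ast,L_\succ^\ast)$, and the semidirect product $\widehat A=A\ltimes_{L_\succ,R_\prec}A$ matches the one appearing in the statement.

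Next I would verify the two structural hypotheses of Theorem~\mref{thm:cons}. For the $\calo$-operator of weight zero I take $\alpha:=\id\colon A\to A$; by the very definition of the associated product, for all $u,v\in A$,
\[
\id(u)\star\id(v)=u\star v=u\succ v+u\prec v=\id\big(L_\succ(\id(u))v\big)+\id\big(uR_\prec(\id(v))\big),
\]
which is exactly the defining identity of an $\calo$-operator of weight zero associated to $(A,L_\succ,R_\prec)$; so $\widehat\id$ given by Eq.~\meqref{eq:alpha} is a Rota-Baxter operator of weight $\lambda$ on $\widehat A$ by Proposition~\mref{pro:semi-direct}. For the balanced homomorphism, I would read Eq.~\meqref{eq:alphabeta} of Lemma~\mref{lem:semi-direct} with its bimodule taken to be the dual bimodule $(A^\ast,R_\prec^\ast,L_\succ^\ast)$; the three identities required of $\beta$ there become precisely the three displayed hypotheses on $\beta$ in the statement, so that $\widetilde\beta=\beta+\sigma(\beta)$ is invariant in $\widehat A\otimes\widehat A$ by Lemma~\mref{lem:semi-direct}.

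Finally, since $\alpha=\id$ gives $\alpha^\ast=\id$ on $A^\ast$, the compatibility condition $\beta\alpha^\ast(x^\ast)+\alpha\beta^\ast(x^\ast)=\mu\langle x^\ast,\bfone\rangle\bfone$ demanded by Theorem~\mref{thm:cons} collapses to $\beta(x^\ast)+\beta^\ast(x^\ast)=\mu\langle x^\ast,\bfone\rangle\bfone$, which is exactly the last hypothesis imposed on $\beta$. With all the hypotheses of Theorem~\mref{thm:cons} verified, applying it directly yields that $r_1$ and $r_2$, with $r_1^\sharp=\widetilde\beta^\sharp\widehat\id^\ast$ and $r_2^\sharp=\widehat\id\,\widetilde\beta^\sharp$, are \inv solutions of the \nybe in $\widehat A$ with $s=\widetilde\beta$. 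The only point requiring care is bookkeeping: correctly matching the left and right actions of the dual bimodule so that the three hypotheses on $\beta$ align with Eq.~\meqref{eq:alphabeta}, together with keeping track of the identification used to regard $\beta$ as a tensor in $\widehat A\otimes\widehat A$; beyond this translation there is no substantive obstacle.
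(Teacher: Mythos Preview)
Your proposal is correct and follows essentially the same approach as the paper: the paper's proof simply observes that $\id$ is an $\mathcal O$-operator of weight zero of $(A,\star)$ associated to the bimodule $(A,L_\succ,R_\prec)$ and then invokes Theorem~\mref{thm:cons}. Your write-up fills in precisely the bookkeeping details (identifying the dual bimodule, matching the balanced-homomorphism conditions of Eq.~\meqref{eq:alphabeta} to the hypotheses on $\beta$, and reducing the compatibility condition) that the paper leaves implicit.
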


\begin{proof}
Note that the identity map ${\rm id}$ is an $\mathcal O$-operator
of the associated algebra $(A,\star )$ associated to the bimodule
$(A,L_\succ,R_\prec)$. Hence the conclusion follows from
Theorem~\mref{thm:cons}.
\end{proof}

\begin{rmk}\label{rmk:diff}
The above constructions of \inv solutions of the \nybe are
different from the construction of solutions of the \aybe from
$\mathcal O$-operators given in \mcite{BGN1}, where the symmetric
invariant tensors appearing in the symmetric parts of solutions in the semi-direct product algebras can be ``lifted" from
linear maps from the bimodules to the $k$-algebras themselves as
Lemma~\mref{lem:semi-direct} illustrates. However, it is not true
for the symmetric tensor ${\bf 1}\otimes {\bf 1}$ any more, that
is, the approach in \mcite{BGN1} does not apply here due to the
appearance of the new term $\mu({\bf 1}\otimes {\bf 1})$.
\end{rmk}

\subsection{From \nybe to Rota-Baxter operators on unitization algebras}
We end the section with constructions of Rota-Baxter operators
from solutions of the \nybe in unitization algebras, or
equivalently, augmented algebras.

The {\bf unitization} of a not necessarily unital $\bfk$-algebra $A'$ is the direct sum $\bfk$-algebra $A:=\bfk\oplus A'$. An {\bf
augmentation map} on a unital $\bfk$-algebra $(A,\apr,\bfone)$ is a $\bfk$-algebra homomorphism $\vap: A\rightarrow \bfk$.
An {\bf augmented unital $\bfk$-algebra} is a unital $\bfk$-algebra $(A,\apr ,{\bf 1})$ with an augmentation map $\vap$.

As it is well known~\cite[Theorem~5.1.1]{Co}, augmented unital $\bfk$-algebras are precisely the unitizations of (not necessarily unital) algebras given by
$$ \bfk\oplus A' \longleftrightarrow (A,\vep),$$
where $A:=\bfk\oplus A'$, $\vep$ is the projection to $\bfk$, while $A'$ is $\ker \vep$.

\begin{rmk}\label{rmk:counit}
For an augmented unital $\bfk$-algebra $(A,\apr ,{\bf
1}, \vap)$ with augmentation map $\vap$, there is a
basis $\{e_1,\cdots,e_n\}$ of $A$ such that $e_1={\bf 1}$ and
$\{e_2,\cdots,e_n\}$ is a basis of $\ker \vap=A'$. Let $\{e_1^\ast
,\cdots,e_n^\ast \}$ be the dual basis. Then $\vap=e_1^\ast $.
\end{rmk}

The following conclusion is obvious.

\begin{lem} Let $(A,\apr ,{\bf 1})$ be a unital $\bfk$-algebra and
$\vap$ be an augmentation map. Then $\vap({\bf 1})=1_\bfk$, and
\begin{equation}
\vap(x\apr y\apr z) = \vap(y\apr z\apr x) = \vap(z\apr x\apr y)=\vap(x)\vap(y)\vap(z),\,
\forall x, y, z\in A. \mlabel{eq:cyc}
\end{equation}
\end{lem}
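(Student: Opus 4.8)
The plan is to derive both assertions straight from the two defining properties of an augmentation map $\vap$, namely $\bfk$-linearity and multiplicativity $\vap(a\apr b)=\vap(a)\vap(b)$, together with the fact that the target $\bfk$ is a commutative field. No structural input from $A$ beyond associativity and the existence of ${\bf 1}$ is needed.

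First I would settle $\vap({\bf 1})=1_\bfk$. Applying multiplicativity to the identity ${\bf 1}\apr{\bf 1}={\bf 1}$ gives $\vap({\bf 1})=\vap({\bf 1})^2$, so $\vap({\bf 1})$ is an idempotent in $\bfk$ and hence equals $0$ or $1$. Since $\vap$ is an augmentation, i.e.\ a nonzero (in fact surjective) homomorphism, the value $\vap({\bf 1})=0$ is excluded: it would force $\vap(x)=\vap(x\apr{\bf 1})=\vap(x)\vap({\bf 1})=0$ for every $x\in A$, contradicting nontriviality. Therefore $\vap({\bf 1})=1_\bfk$. (Equivalently, one may simply invoke the unitality that is built into the notion of a homomorphism of unital $\bfk$-algebras.) For the remaining identities, iterating multiplicativity twice yields $\vap(x\apr y\apr z)=\vap(x)\vap(y)\vap(z)$ for all $x,y,z\in A$, which is the final equality in Eq.~\meqref{eq:cyc}; the cyclic permutations factor in the same way as $\vap(y)\vap(z)\vap(x)$ and $\vap(z)\vap(x)\vap(y)$. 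Because multiplication in $\bfk$ is commutative, these three products all coincide with $\vap(x)\vap(y)\vap(z)$, which closes the chain of equalities.

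There is essentially no obstacle to overcome. The only point deserving a word is the dichotomy for $\vap({\bf 1})$, resolved by the field hypothesis on $\bfk$ and the nontriviality of the augmentation; the cyclic symmetry reflects only the commutativity of the \emph{target} $\bfk$ and not any trace-like property of $A$, which is why the statement holds for an arbitrary associative $A$.
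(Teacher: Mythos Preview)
Your proof is correct and matches the paper's approach: the paper simply declares the lemma ``obvious'' without further argument, and what you have written is exactly the direct verification from the definition of an augmentation map as a $\bfk$-algebra homomorphism, together with commutativity of the base field $\bfk$.
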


Let $(A, \apr ,{\bf 1},\vap)$ be an augmented unital $\bfk$-algebra.
Define linear maps $$\vap_l:A\otimes A\rightarrow \bfk\otimes
A, \vap_r:A\otimes A\rightarrow A\otimes \bfk$$ respectively by
$$\vap_l:=\vap\otimes \id,\quad \vap_r:=\id\otimes \vap.$$
Similarly, define linear maps $$\vap_{12}:A\otimes A\otimes
A\rightarrow \bfk\otimes \bfk \otimes A, \vap_{23}:A\otimes
A\otimes A\rightarrow A\otimes \bfk\otimes \bfk,
\vap_{13}:A\otimes A\otimes A\rightarrow \bfk\otimes A\otimes \bfk
$$ respectively by
$$\vap_{12}:=\vap\otimes \vap\otimes \id,\quad \vap_{23}:= \id \ot \vap\ot \vap, \quad  \vap_{13}:=
\vap \ot \id\ot \vap.$$ Denote the natural isomorphisms of
algebras~\mcite{Guo1} $$\beta_\ell:\bfk\ot A \to A , 1_{\bfk}\ot a
\mapsto a;\;\; \beta_r: A\ot \bfk\to A , x \ot 1_{\bfk} \mapsto x,
\, \forall\, x\in A.$$
Similarly, define natural isomorphisms of algebras
\begin{align*}
\beta_{12}: &\ \bfk\ot \bfk\ot A \to A , \quad 1_{\bfk}\ot 1_{\bfk} \ot x \mapsto x,\\
\beta_{23}: &\ A \ot \bfk \ot \bfk \to A , \quad x \ot 1_{\bfk}\ot 1_{\bfk} \mapsto x, \\
\beta_{13}: &\ \bfk\ot A\ot \bfk\to A , \quad 1_{\bfk}\ot x \ot
1_{\bfk} \mapsto x , \, \forall\, x\in A.
\end{align*}
For any $x\in A$, set
\begin{equation}\notag
x_{(l)}:=x\otimes {\bf 1}\in A\otimes A,\;\; x_{(r)}:={\bf 1}\otimes
x\in A\otimes A,\end{equation}
\begin{equation}\notag
\loc{x}{1}:= x \ot {\bf 1}\ot {\bf 1} \in A\otimes A\otimes A,  \loc{x}{2}:= {\bf 1}\ot x\ot {\bf 1}\in A\otimes A\otimes
A, \loc{x}{3}:= {\bf 1}\ot {\bf 1} \ot x\in A\otimes A\otimes
A. \mlabel{eq:e3}
\end{equation}

\begin{thm}\mlabel{thm:nhybrb}
Let $(A, \apr, {\bf 1},\vap)$ be an augmented unital
$\bfk$-algebra. Let $r=\sum_{i}a_i \ot b_i \in A\ot A$ be a
solution of the \nybe and $\sr$ be the \tsymm of $r$.
Define linear maps $P,P':A\rightarrow A$ by
\begin{equation}\mlabel{eq:RB}
P(x):= \sum_i\vap(a_i\apr x)b_i,\;\;P'(x) := \sum_i
\vap(b_i\apr x)a_i,\;\;\forall x\in A.
\end{equation}
\begin{enumerate}
\item If $\sr$ is nonzero and satisfies
\begin{equation}
\beta_l(\vap_l ( \sr \apr x_{(l)}))=x,\;\;\forall x\in A, \mlabel{eq:sequation}
\end{equation}
then $P$ and $P'$ are Rota-Baxter operators  of weight
$-1$.  \mlabel{it:nhybrba}
\item
If  $\sr=0$,
then $P$ and $P'$ are Rota-Baxter operators  of weight zero. \mlabel{it:nhybrbb}
\end{enumerate}
\end{thm}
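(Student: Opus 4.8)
The plan is to reduce the statement to the $\calo$-operator characterization of solutions of the \nybe proved in Theorem~\mref{pro:iff}, after recognizing $P$ and $P'$ as composites of $r^\sharp$ and $r^{t\sharp}$ with a single family of functionals built from $\vap$. For each $x\in A$ I would introduce $\xi_x:=R^\ast(x)\vap\in A^\ast$, characterized by $\langle \xi_x,a\rangle=\vap(a\apr x)$ for all $a\in A$. Unravelling Eq.~\meqref{eq:RB} then gives $P(x)=r^\sharp(\xi_x)$ and $P'(x)=r^{t\sharp}(\xi_x)$, so the two operators are the restrictions of $r^\sharp$ and $r^{t\sharp}$ along $x\mapsto\xi_x$.

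Next I would reinterpret the hypothesis~\meqref{eq:sequation}. Writing $\sr=\sum_k c_k\ot d_k$, the left-hand side $\beta_l(\vap_l(\sr\apr x_{(l)}))$ equals $\sum_k\vap(c_k\apr x)d_k=\sr^\sharp(\xi_x)$, so Eq.~\meqref{eq:sequation} is exactly the clean statement that $\sr^\sharp(\xi_x)=x$ for all $x\in A$. The computational heart of the argument is then a pair of conversion identities for the dual actions, valid for every $z\in A$:
$$\xi_x L^\ast(z)=\xi_{x\apr z},\qquad R^\ast(z)\xi_y=\xi_{z\apr y}.$$
Both follow by pairing against an arbitrary $v\in A$ and invoking the cyclicity of $\vap$ recorded in Eq.~\meqref{eq:cyc}; for instance $\langle \xi_x L^\ast(z),v\rangle=\vap(z\apr v\apr x)=\vap(v\apr x\apr z)=\langle \xi_{x\apr z},v\rangle$.

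With these in hand I would finish as follows. By Theorem~\mref{pro:iff}, $r^\sharp$ is an $\calo$-operator \rwith $-\sr^\sharp$ associated to $(A^\ast,R^\ast,L^\ast)$; evaluating its defining identity at $u=\xi_x$, $v=\xi_y$ and applying the two conversion identities turns the first two terms into $P(P(x)\apr y)$ and $P(x\apr P(y))$. For the twist term I substitute the reformulated hypothesis $\sr^\sharp(\xi_y)=y$ to get $-r^\sharp(\xi_x L^\ast(y))=-r^\sharp(\xi_{x\apr y})=-P(x\apr y)$, which is precisely the weight $-1$ Rota-Baxter identity~\meqref{eq:rbo}. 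The same scheme applied to $r^{t\sharp}$, now using that $r^{t\sharp}$ is an $\calo$-operator \lwith $-\sr^\sharp$, yields the claim for $P'$. For part~\meqref{it:nhybrbb}, where $\sr=0$, one has $\sr^\sharp=0$, so the twist term vanishes identically without any further hypothesis and $P,P'$ emerge as Rota-Baxter operators of weight zero.

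I expect the only genuine obstacle to be bookkeeping: keeping the left/right module actions $R^\ast,L^\ast$ and the corresponding left/right twists straight, and confirming that the twist term reproduces $-P(x\apr y)$ with the correct sign and on the correct slot. The structural input beyond Theorem~\mref{pro:iff} is entirely the multiplicativity of the augmentation map $\vap$, i.e.\ its cyclicity in Eq.~\meqref{eq:cyc}, which is what makes the conversion identities hold; everything else is a direct substitution.
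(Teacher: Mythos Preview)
Your argument is correct, and it is a genuinely different route from the paper's own proof. The paper proceeds by direct tensor manipulation: it multiplies the \nybe identity by $\loc{x}{1}\apr\loc{y}{2}$, applies $\beta_{12}\vap_{12}$ to each term to derive the mixed relation $P(x)\apr P(y)+P(x\apr P'(y))-P(P(x)\apr y)=\mu\vap(y)P(x)$, and then uses the hypothesis~\meqref{eq:sequation} (or $\sr=0$) to eliminate $P'$ via $P'(x)=x+\mu\vap(x){\bf 1}-P(x)$ (respectively $P'(x)=\mu\vap(x){\bf 1}-P(x)$), arriving at the Rota-Baxter identity after cancellation. Your approach instead factors $P=r^\sharp\circ(x\mapsto\xi_x)$ and $P'=r^{t\sharp}\circ(x\mapsto\xi_x)$ and invokes the operator characterization already established in Theorem~\mref{pro:iff}, so that the only new ingredients are the two conversion identities $\xi_x L^\ast(z)=\xi_{x\apr z}$, $R^\ast(z)\xi_y=\xi_{z\apr y}$ and the reformulation $\sr^\sharp(\xi_x)=x$ of~\meqref{eq:sequation}. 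This is cleaner and more conceptual: it makes transparent that Theorem~\mref{thm:nhybrb} is a specialization of Theorem~\mref{pro:iff} along the map $x\mapsto\xi_x$, with the augmentation supplying exactly the cyclicity needed for that map to intertwine the $(R^\ast,L^\ast)$-action with left and right multiplication in $A$. The paper's computation, by contrast, is self-contained and does not rely on Theorem~\mref{pro:iff}, at the cost of heavier bookkeeping with the tensors $r_{12},r_{13},r_{23}$ and the maps $\beta_{12}\vap_{12}$.
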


\begin{proof}
(\mref{it:nhybrba}).
Let $x, y\in A$. By definition, we have
\begin{eqnarray}
P(x)&=&\beta_l\vap_l(r\apr x_{(l)}) =\beta_{13}(\vap_{13}
(r_{12}\apr \loc{x}{1}))  \label{eq:RBP22}
= \beta_{12}(\vap_{12}(r_{13}\apr\loc{x}{1})) =
\beta_{12}(\vap_{12}(r_{23}\apr\loc{x}{2})),\ \ \\
P'(x)&=&\beta_r\vap_r(r\apr x_{(r)})=\beta_l\vap_l(\sigma (r)\apr x_{(l)})\nonumber\\
&=&\beta_{23}(\vap_{23} (r_{12}\apr\loc{x}{2})) = \beta_{23}
(\vap_{23}(r_{13}\apr\loc{x}{3})) =
\beta_{13}(\vap_{13}(r_{23}\apr \loc{x}{3})).\label{eq:RBP2}
\end{eqnarray}
Since $r$ satisfies Eq.~(\mref{eq:nhybe}), we have
$$r_{12}\apr r_{13} \apr\loc{x}{1}\apr\loc{y}{2} + r_{13}\apr r_{23}\apr \loc{x}{1}\apr \loc{y}{2} - r_{23}\apr r_{12}\apr \loc{x}{1}\apr \loc{y}{2} =  \mu r_{13} \apr \loc{x}{1}\apr \loc{y}{2}. $$
Applying $\beta_{12}\vap_{12}:A\otimes A\otimes A\rightarrow A$ to
both sides of the above equation, we get
\begin{equation}
\beta_{12}\vap_{12}\big(r_{12}\apr r_{13}
\apr \loc{x}{1}\apr \loc{y}{2}+r_{13}\apr r_{23}\apr\loc{x}{1}\apr\loc{y}{2} - r_{23}\apr r_{12} \apr\loc{x}{1}\apr\loc{y}{2}\big)
= \mu \beta_{12}\big(\vap_{12}( r_{13}
\apr\loc{x}{1}\apr\loc{y}{2})\big). \mlabel{eq:nh00}
\end{equation}
Furthermore, we have
{\small
\begin{equation}
\begin{aligned}
\beta_{12}\big(\vap_{12}(r_{12}\apr r_{13}
\apr\loc{x}{1}\apr\loc{y}{2})\big)
=&\ \beta_{12}(\vap_{12} (\sum_{i,j}  (a_i\apr a_j\apr x) \ot (b_i\apr y)\otimes b_j))\\
=&\ \beta_{12}(\sum_{i,j}  \vap(a_i\apr a_j\apr x) \ot \vap(b_i\apr y)\otimes b_j)\\
=&\ \sum_{i,j} \vap(a_i\apr a_j\apr x)  \vap(b_i\apr y)b_j \\
\stackrel{(\mref{eq:RB})}{=}&\ \sum_{j} \vap(P'(y)\apr a_j\apr x)b_j \\
\stackrel{(\mref{eq:cyc})}{=}&\ \sum_{j} \vap(a_j\apr x \apr P'(y))b_j   \\
\stackrel{(\mref{eq:RB})}{=}& P(x\apr P'(y)).
\end{aligned}
\mlabel{eq:nh1}
\end{equation}
}
Similarly, we have
\begin{eqnarray}
&&\beta_{12}\big(\vap_{12} (r_{13}\apr r_{23}
\apr\loc{x}{1}\apr\loc{y}{2})\big)=P(x)\apr P(y),\mlabel{eq:nh2}\\
&&\beta_{12}\big(\vap_{12}(r_{23}\apr r_{12}\apr \loc{x}{1}\apr\loc{y}{2})\big) =P(P(x)\apr y),\mlabel{eq:nh3}\\
&&\beta_{12}\big(\vap_{12}( r_{13} \apr\loc{x}{1}\apr\loc{y}{2})\big)
=\vap(y) P(x).\mlabel{eq:nh4}
\end{eqnarray}
Substituting Eqs.~(\mref{eq:nh1})-(\mref{eq:nh4}) into
Eq.~(\mref{eq:nh00}) gives
\begin{equation}
P(x)\apr P(y) + P(x\apr P'(y)) - P(P(x)\apr y) = \mu \vap(y)P(x).
\mlabel{eq:ppea}
\end{equation}
Since the \tsymm $\sr$ of $r$ is nonzero, we have
$$\beta_l\vap_l((r+
\sigma(r))\apr x_{(l)}-\mu
x_{(l)})=\beta_l\vap_l( \sr\apr x_{(l)}). $$ By Eqs.~(\mref{eq:RBP22}),~(\mref{eq:RBP2}) and
Eq.~(\mref{eq:sequation}), we obtain
\begin{equation}
P'(x) =   x+\mu \vap(x){\bf 1} - P(x). \mlabel{eq:ppa}
\end{equation}
Substituting Eq.~(\mref{eq:ppa}) into Eq.~(\mref{eq:ppea}) yields
\begin{align*}
&P(x)\apr P(y) + P\Big( x\apr\big(  y+\mu \vap(y){\bf 1} - P(y)\big) \Big) - P(P(x)\apr y) \\
=& P(x)\apr P(y)  + P(x\apr y)+ \mu \vap(y) P(x) - P(x\apr P(y))-
P(P(x)\apr y)\\
 =& \mu \vap(y)P(x),
\end{align*}
that is,
$$P(x)\apr P(y) = P(P(x)\apr y) + P(x\apr P(y))- P(x\apr y),$$
as required. Similarly, we prove that $P'$ is also a Rota-Baxter
operator of weight $-1$.

(\mref{it:nhybrbb}). By an argument similar to the proof of Item (\mref{it:nhybrba}), we also have
\begin{equation}
P(x)\apr P(y) + P(x\apr P'(y)) - P(P(x)\apr y) = \mu \vap(y)P(x).
\mlabel{eq:ppea2}
\end{equation}
Since the \tsymm of $r$ is zero, we obtain
\begin{align*}
r+\sigma (r)-\mu ({\bf 1} \ot  {\bf 1})=0,
\end{align*}
and so
$$\beta_l\vap_l((r+
\sigma(r))\apr x_{(l)}-\mu
x_{(l)})=0.$$
By Eqs.~(\mref{eq:RBP22})-(\mref{eq:RBP2}), we have
\begin{equation}
P'(x) = \mu \vap(x){\bf 1} - P(x). \mlabel{eq:ppa2}
\end{equation}
Substituting Eq.~(\mref{eq:ppa2}) into Eq.~(\mref{eq:ppea2}) shows that $P$ is a Rota-Baxter operator of weight zero. A similar argument  proves that $P'$ is a Rota-Baxter operator of weight zero.
\end{proof}

\begin{cor}\mlabel{coro:rbp0} Let $(A, \apr , {\bf 1},\vap)$ be an augmented unital
$\bfk$-algebra. Let $r\in A\ot A$ be anti-symmetric $($i.e.
$r+\sigma(r)=0)$. If $r$ satisfies the \aybe,
then the operator $P$ defined by Eq.~\meqref{eq:RB} is a Rota-Baxter
operator of weight zero.
\end{cor}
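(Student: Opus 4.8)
The plan is to recognize Corollary~\mref{coro:rbp0} as the antisymmetric, weight-zero specialization of Theorem~\mref{thm:nhybrb}\,(\mref{it:nhybrbb}), so that essentially no new computation is needed. The key observation is that the \aybe in Eq.~\meqref{eq:aybe} is exactly the \nybe in Eq.~\meqref{eq:nhybe} with $\mu=0$; thus the hypothesis that $r$ solves the \aybe says precisely that $r$ is a solution of the $\mu$-\nybe for $\mu=0$, which is the setting required to apply the theorem.

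First I would compute the relevant \tsymm under the corollary's hypotheses. By Definition~\mref{de:tsymm}, taking $\mu=0$ the \tsymm of $r$ is
\[
\sr=r+\sigma(r)-0\cdot(\bfone\ot\bfone)=r+\sigma(r).
\]
The antisymmetry hypothesis $r+\sigma(r)=0$ then forces $\sr=0$. This is the only genuine verification in the argument, and it is immediate.

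With $\sr=0$ in hand, I would invoke Theorem~\mref{thm:nhybrb}\,(\mref{it:nhybrbb}), whose hypothesis is exactly that the \tsymm of a solution of the \nybe vanishes, to conclude that the operator $P$ of Eq.~\meqref{eq:RB} is a Rota-Baxter operator of weight zero. (The identical reasoning applies to $P'$, although only $P$ is asserted in the statement.) Since the entire analytic content---the manipulations of Eqs.~\meqref{eq:nh1}--\meqref{eq:nh4} and the reduction via Eq.~\meqref{eq:ppa2}---has already been discharged in the proof of Theorem~\mref{thm:nhybrb}, the corollary reduces to matching hypotheses.

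There is essentially no obstacle here; the one point worth double-checking is the bookkeeping that antisymmetry together with $\mu=0$ really lands one in case~(\mref{it:nhybrbb}) rather than case~(\mref{it:nhybrba})---that is, that $\sr$ is genuinely zero and not merely degenerate---which is exactly what the displayed computation above confirms.
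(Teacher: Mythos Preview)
Your proposal is correct and matches the paper's own proof essentially verbatim: the paper simply observes that the result follows from Theorem~\mref{thm:nhybrb}\,(\mref{it:nhybrbb}) by taking $\mu=0$, which is precisely your argument that antisymmetry forces $\sr=r+\sigma(r)=0$ and then case~(\mref{it:nhybrbb}) applies.
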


\begin{proof}
    It follows from Theorem~\mref{thm:nhybrb} (\mref{it:nhybrbb}) by taking
    $\mu=0$.
\end{proof}

\begin{cor} \mlabel{cor:YBE-RB}
With the conditions in Theorem~\mref{thm:nhybrb}, suppose that $\sr\in A\ot A$ is nonzero and invariant, that is,
$\sr\apr x_{(l)}=x_{(r)}\apr \sr,\;\;\forall x\in A.$
As in Remark~\mref{rmk:counit}, let $\{e_1={\bf 1},
e_2,\cdots,e_n\}$ be a basis of $A$ and $\{e_1^\ast ,e_2^\ast
,\cdots,e_n^\ast \}$ be the dual basis such that $\vap=e_1^\ast $.
Moreover, suppose
$$\sr={\bf 1}\otimes {\bf 1}+\sum_{i,j>1}s_{ij}e_i\otimes e_j.$$
Then linear maps $P$ and $P'$ defined by Eq.~\meqref{eq:RB} are
Rota-Baxter operators of weight $-1$.
\end{cor}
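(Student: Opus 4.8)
The plan is to deduce the statement directly from Theorem~\mref{thm:nhybrb}\,(\mref{it:nhybrba}). Since $\sr$ contains the summand ${\bf 1}\otimes {\bf 1}$ it is automatically nonzero, so the one hypothesis of that part of the theorem still needing verification is the identity Eq.~\meqref{eq:sequation}, namely $\beta_l(\vap_l(\sr\apr x_{(l)}))=x$ for all $x\in A$. Once this is in hand, the conclusion that the maps $P$ and $P'$ defined by Eq.~\meqref{eq:RB} are Rota-Baxter operators of weight $-1$ follows at once. Thus the entire proof is the verification of Eq.~\meqref{eq:sequation} under the two extra assumptions of the corollary: that $\sr$ is invariant and that it has the stated form relative to the augmentation.

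The key step is to exploit the invariance of $\sr$ to transfer the counit contraction from one tensor slot to the other. Writing $\sr=\sum_p u_p\otimes v_p$, the invariance hypothesis $\sr\apr x_{(l)}=x_{(r)}\apr\sr$ reads $\sum_p (u_p\apr x)\otimes v_p=\sum_p u_p\otimes (x\apr v_p)$. Applying $\beta_l\vap_l=\beta_l(\vap\otimes\id)$ to the right-hand expression, where the counit $\vap$ now lands on the factors $u_p$, I expect to obtain
\[
\beta_l(\vap_l(\sr\apr x_{(l)}))=\sum_p \vap(u_p)\,(x\apr v_p)=x\apr w,\qquad w:=\sum_p \vap(u_p)\,v_p=\beta_l(\vap_l(\sr)).
\]
In other words, invariance collapses the whole left-hand side of Eq.~\meqref{eq:sequation} into a single right multiplication by the contracted element $w$, so the identity reduces to the claim that $w={\bf 1}$.

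This last reduction is exactly where the explicit form of $\sr$ enters. With $\sr={\bf 1}\otimes {\bf 1}+\sum_{i,j>1}s_{ij}e_i\otimes e_j$ and $\vap=e_1^\ast$, so that $\vap({\bf 1})=1$ while $\vap(e_i)=\langle e_1^\ast ,e_i\rangle=0$ for all $i>1$, every term in the double sum is annihilated by $\vap$ on its left factor, leaving $w=\vap({\bf 1})\,{\bf 1}={\bf 1}$. Hence $\beta_l(\vap_l(\sr\apr x_{(l)}))=x\apr {\bf 1}=x$, which is precisely Eq.~\meqref{eq:sequation}, and Theorem~\mref{thm:nhybrb}\,(\mref{it:nhybrba}) then yields the result. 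I do not expect a genuine obstacle here: the only point requiring care is the bookkeeping of the identification $\bfk\otimes A\cong A$ via $\beta_l$, making sure that after invoking invariance the counit is contracted against the correct (now $u_p$) slot; the remaining basis computation annihilating the $s_{ij}$-terms is routine.
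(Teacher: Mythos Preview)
Your proposal is correct and follows essentially the same approach as the paper: both use the invariance of $\sr$ to rewrite $\sr\apr x_{(l)}$ as $x_{(r)}\apr\sr$, then apply $\vap$ to the left slot and observe that $\vap(e_i)=0$ for $i>1$ annihilates all the $s_{ij}$-terms, leaving $x$; the conclusion then follows from Theorem~\mref{thm:nhybrb}\,(\mref{it:nhybrba}). Your explicit isolation of the element $w=\beta_l(\vap_l(\sr))$ is a minor packaging difference, not a genuinely distinct argument.
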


\begin{proof}
For all $x\in A$, we have
$$\beta_l\vap_l(\sr\apr x_{(l)})=\beta_l\vap_l(x_{(r)}\apr \sr)=\beta_l(\vap({\bf
1})\otimes x)+\sum_{i,j>1}\beta_l(s_{ij}\vap(e_i)\otimes (x\apr e_j))=x,
$$
that is, $\sr$ satisfies Eq.~(\mref{eq:sequation}). Hence
the conclusion follows from Theorem~\mref{thm:nhybrb}.
\end{proof}

\begin{pro}\mlabel{pp:augfrob}
Let $(A,\apr ,{\bf 1})$ be a unital $\bfk$-algebra. If $\vap:
A\rightarrow \bfk$ is an augmentation map, then the bilinear form
$\frak B$ on $A$ defined by
\begin{equation} \frak
B(x,y):=\vap(x)\vap(y),\,\;\forall x,y\in
A,\mlabel{eq:a-B}\end{equation} is symmetric and invariant.
Moreover, $\frak B$ satisfies
$$\frak B(x\apr y,z)=\frak B(y\apr x, z),\;\;\forall x,y,z\in A.$$
In particular, if $\frak B$ is nondegenerate, then $A$ is
commutative. Conversely, if $\frak B$ is a symmetric invariant
bilinear form satisfying
\begin{equation}\notag
\frak B(x,y)=\frak B(x\apr y,1)=\frak B(x,1)\frak B(y,1),\;\;\forall
x,y\in A,
\end{equation}
then the linear map $\vap:A\rightarrow\bfk$ defined by
\begin{equation}\notag
\vap(x):=\frak B(x,1),\;\;\forall x\in A,
\end{equation}
is an augmentation map.
\end{pro}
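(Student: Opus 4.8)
The plan is to verify each assertion by direct computation, using only that an augmentation map $\vap$ is a $\bfk$-algebra homomorphism, that is, $\vap(x\apr y)=\vap(x)\vap(y)$ and $\vap({\bf 1})=1_\bfk$, as recorded in Eq.~\meqref{eq:cyc}. No deeper input is needed; the content is purely the interplay between the multiplicativity of $\vap$ and the splitting $\frak B(x,y)=\vap(x)\vap(y)$.

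For the forward direction, symmetry of $\frak B$ is immediate from commutativity of $\bfk$, since $\frak B(x,y)=\vap(x)\vap(y)=\vap(y)\vap(x)=\frak B(y,x)$. For invariance I would expand both sides using multiplicativity of $\vap$:
\[
\frak B(a\apr b,c)=\vap(a\apr b)\vap(c)=\vap(a)\vap(b)\vap(c)=\vap(a)\vap(b\apr c)=\frak B(a,b\apr c).
\]
The identity $\frak B(x\apr y,z)=\frak B(y\apr x,z)$ follows at once, as both sides equal $\vap(x)\vap(y)\vap(z)$ by commutativity of $\bfk$. Finally, if $\frak B$ is nondegenerate, then from $\frak B(x\apr y-y\apr x,z)=0$ for all $z\in A$ we conclude $x\apr y-y\apr x=0$, so $A$ is commutative.

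For the converse, set $\vap(x):=\frak B(x,{\bf 1})$, which is linear since $\frak B$ is bilinear. The heart of the matter is multiplicativity, and it falls out of the two hypothesized identities:
\[
\vap(x\apr y)=\frak B(x\apr y,{\bf 1})=\frak B(x,y)=\frak B(x,{\bf 1})\,\frak B(y,{\bf 1})=\vap(x)\vap(y).
\]
I would note that the first hypothesized equality $\frak B(x\apr y,{\bf 1})=\frak B(x,y)$ is in fact just invariance specialized to $z={\bf 1}$ (since $y\apr{\bf 1}=y$), so the genuinely new assumption being used is the multiplicative splitting $\frak B(x,y)=\frak B(x,{\bf 1})\frak B(y,{\bf 1})$.

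It remains to check $\vap({\bf 1})=1_\bfk$, and this is the only (mild) obstacle. Specializing the splitting to $y={\bf 1}$ gives $\vap(x)=\vap(x)\vap({\bf 1})$ for all $x$, i.e. $\vap(x)\bigl(1_\bfk-\vap({\bf 1})\bigr)=0$. One must rule out the degenerate alternative $\vap\equiv 0$: if $\vap\equiv 0$ then $\frak B\equiv 0$ by the splitting, so in the intended Frobenius setting where $\frak B$ is nondegenerate this cannot happen, as taking $y={\bf 1}\ne 0$ produces some $x$ with $\frak B(x,{\bf 1})\ne 0$. Hence some $\vap(x)\ne 0$, which forces $\vap({\bf 1})=1_\bfk$, and therefore $\vap$ is an augmentation map. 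I would flag in the write-up that this last step is where nondegeneracy (or merely $\frak B\not\equiv 0$) is needed.
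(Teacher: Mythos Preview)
Your proof is correct and follows exactly the approach the paper takes, which is simply ``All the statements can be verified directly from the definitions.'' Your write-up is more thorough than the paper's, and your flag about needing $\frak B\not\equiv 0$ (or nondegeneracy) to force $\vap({\bf 1})=1_\bfk$ is a valid observation about a tacit assumption in the converse as stated.
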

\begin{proof}
All the statements can be verified directly from the definitions. \end{proof}

\begin{ex}
Let $(A, \apr , {\bf 1},\vap)$ be an augmented unital commutative
$\bfk$-algebra. Let $\frak B$ be the bilinear form defined by
Eq.~(\mref{eq:a-B}). Suppose that $\frak B$ is nondegenerate. Then
$(A,\apr ,\frak B)$ is a symmetric Frobenius algebra. Let
$\phi^\sharp:A^\ast \rightarrow A$ be the linear isomorphism defined
by Eq.~(\mref{eq:phi}). Let $\{e_1={\bf 1},e_2,\cdots,e_n\}$ be a
basis of $A$ satisfying
$$\frak B(e_i,e_j)=\delta_{ij}, \;\;\forall i,j=1,\cdots, n.$$
Then $\phi\in A\otimes A$ is invariant and
$$\phi=\sum_{i=1}^n e_i\otimes e_i={\bf 1}\otimes {\bf
1}+\sum_{i=2}^n e_i\otimes e_i.$$ By Theorem~\ref{thm:nhybrb} and Corollary~\mref{cor:YBE-RB}, we
show that if $r$ satisfies Eqs.~(\mref{eq:nhybe}) and
(\mref{eq:phi-inv}), then the linear maps
 $P$ and $P'$ defined by Eq.~(\mref{eq:RB}) are
Rota-Baxter operators of weight $\lambda$.  Note that this conclusion also follows form Theorem~\mref{cor:Frobenius}, since in this case, $P=P_r$ and $P'=P_r^t$, where $P_r$ and $P_r^t$ are defined by Eq.~(\mref{eq:Pr}).
\end{ex}

\section{Classification of \inv solutions of \nybe
in low dimensions}
\mlabel{sec:exam}

In this section, we classify \inv solutions of the \nybe for $\mu \ne 0$ in the unital complex algebras
in dimensions two and three and find that all of them are obtained from
Rota-Baxter operators through Theorem~\mref{cor:Frobenius}.
It would be interesting to see what happens for algebras in higher dimensions.

\subsection{The classification in dimension two}\mlabel{ss:2dim}

The set of symmetric invariant tensors of a $\bfk$-algebra $A$ is a subspace of $A\ot A$ and is denoted by ${\rm Inv}(A)$.

There are two two-dimensional unital $\CC$-algebras whose
nonzero products with respect to a basis $\{e_1,e_2\}$ are given
by~\mcite{Pe}
\begin{eqnarray*}
&&(A1): e_1e_1=e_1, e_1e_2=e_2e_1=e_2;\\
&&(A2): e_1e_1=e_1, e_2e_2=e_2.
\end{eqnarray*}
By~\mcite{Liu}, for the algebra $(A1)$, there is only
one nonzero solution $r=\mu e_1\otimes e_1$ of the \nybe
Eq.~(\mref{eq:nhybe}). By Remark~\mref{rmk:none}, this solution is not \inv.

Consider the solutions of the \nybe in the algebra $(A2)$. Eight of the nine nonzero solutions are \inv, given by
\begin{eqnarray*}
&&r_1=\mu(e_1\otimes e_1+e_2\otimes e_2+e_1\otimes e_2),\;\;r_2=\mu(e_1\otimes e_1+e_2\otimes e_2+e_2\otimes
e_1),\\
&&r_3=\mu e_1\otimes e_2,\;\;r_4=\mu e_2\otimes e_1,\\
&&r_5=\mu (e_1\otimes e_1+e_1\otimes e_2),\;\;r_6=\mu (e_1\otimes e_1+e_2\otimes e_1),\\
&&r_7=\mu (e_2\otimes e_2+e_1\otimes e_2),\;\;r_8=\mu (e_2\otimes e_2+e_2\otimes e_1).
\end{eqnarray*}
Moreover, all of these solutions are obtained from
Rota-Baxter operators by Theorem~\mref{cor:Frobenius}.

To see this, note that $$r_2=\sigma (r_1),\;\; r_4=\sigma (r_3),\;\; r_6=\sigma
(r_5),\;\;r_8=\sigma (r_7),$$ and the unit of the  algebra $(A2)$
is $e_1+e_2$.  It is straightforward to show that ${\rm Inv}(A2)={\rm
span} \{e_1\otimes e_1, e_2\otimes e_2\}$. Let $\frak B_1$ and
$\frak B_2$ be the bilinear forms on $(A2)$ defined by
$$\frakB_1(e_1,e_1)=\frakB_1(e_2,e_2)=1,
\frakB_1(e_1,e_2)=\frakB_1(e_2,e_1)=0;$$$$
\frakB_2(e_1,e_1)=1,\frakB_2(e_2,e_2)=-1,
\frakB_2(e_1,e_2)=\frakB_2(e_2,e_1)=0.
$$
Then both $\frakB_1$ and $\frak B_2$ are symmetric, nondegenerate and invariant. Their corresponding symmetric, invariant tensors from Lemma~\mref{lem:Frobenius} are
$$\phi_1=e_1\otimes e_1+e_2\otimes e_2,\;\;
\phi_2=e_1\otimes e_1-e_2\otimes e_2,$$
so that $\frakB_i(x,y)=\langle
{\phi_i^\sharp}^{-1}(x),y\rangle$ for any $x,y\in (A2)$ and $i=1,2$.
Now the 8 \inv solutions of the \nybe satisfy
\begin{eqnarray*}
&&r_1+\sigma(r_1)=r_2+\sigma(r_2)=r_1+r_2=\mu\phi_1+\mu(e_1+e_2)\otimes
(e_1+e_2);\\
&&r_3+\sigma(r_3)=r_4+\sigma(r_4)=r_3+r_4=-\mu\phi_1+\mu(e_1+e_2)\otimes
(e_1+e_2);\\
&&r_5+\sigma(r_5)=r_6+\sigma(r_6)=r_5+r_6=\mu\phi_2+\mu(e_1+e_2)\otimes
(e_1+e_2);\\
&&r_7+\sigma(r_7)=r_8+\sigma(r_8)=r_7+r_8=-\mu\phi_2+\mu(e_1+e_2)\otimes
(e_1+e_2).
\end{eqnarray*}
By Theorem~\mref{cor:Frobenius}, their corresponding linear operators $P_{r_1}, P_{r_2}, P_{r_5}, P_{r_6}$ are
Rota-Baxter operators of weight $-\mu$ and $P_{r_3}, P_{r_4}, P_{r_7}, P_{r_8}$ are
Rota-Baxter operators of weight $\mu$.
\delete{
Explicitly, the operators are defined by
\begin{eqnarray*}&&P_{r_1}(e_1)=e_1+e_2, P_{r_1}(e_2)=e_2;\;\;P_{r_2}(e_1)=e_1,
P_{r_2}(e_2)=e_1+e_2;\\
&& P_{r_3}(e_1)=e_2, P_{r_3}(e_2)=0;\;\;P_{r_4}(e_1)=0,P_{r_4}(e_2)=e_1;\\
&& P_{r_5}(e_1)=e_1+e_2, P_{r_5}(e_2)=0;\;\;P_{r_6}(e_1)=e_1,P_{r_6}(e_2)=-e_1;\\
&& P_{r_7}(e_1)=e_2, P_{r_7}(e_2)=-e_2;\;\;P_{r_8}(e_1)=0,P_{r_8}(e_2)=-e_1-e_2.
\end{eqnarray*}
}
\subsection{The classification in dimension three}
\mlabel{ss:dim3}

Any three-dimensional unital $\mathbb C$-algebra is isomorphic to
one of the following five~\mcite{KSTT,St}, defined by their nonzero products on a basis $\{e_1,e_2,e_3\}$
\begin{eqnarray*}
&&(B1):e_1e_1=e_1,e_2e_2=e_2,e_3e_3=e_3;\\
&&(B2):e_1e_1=e_1,e_2e_2=e_2, e_3e_2=e_2e_3=e_3;\\
&&(B3):e_1e_1=e_1,e_1e_2=e_2e_1=e_2,e_1e_3=e_3e_1=e_3,e_2e_2=e_3;\\
&&(B4):e_1e_1=e_1,e_1e_2=e_2e_1=e_2,e_1e_3=e_3e_1=e_3,e_3e_2=e_2,e_3e_3=e_3;\\
&&(B5):e_1e_1=e_1,e_1e_2=e_2e_1=e_2,e_1e_3=e_3e_1=e_3.
\end{eqnarray*}

Solutions of the \nybe in these algebras were classified
in~\mcite{Liu}. For the algebras $(B3)$ and $(B5)$,
there is exactly one nonzero solution $r=\mu e_1\otimes e_1$ and it is not \inv.

For the algebra $(B4)$, it is straightforward to prove that
${\rm Inv}(B4)=0$. Then by~\mcite{Liu}, none of the nonzero solutions is \inv.

For the algebra $(B2)$, $e_1+e_2$ is the unit. Moreover, the
vector subspace $S$ spanned by $e_1$ and $e_2$ is a unital subalgebra of $(B2)$. It is in fact $(A2)$ in Section~\mref{ss:2dim}. As discussed there, there are 8 \inv solutions $r_i$, $1\leq i\leq 8$, of
the \nybe in $S$, together with
the corresponding Rota-Baxter operators $P_{r_i}, 1\leq i\leq 8$ on $(A2)$. In fact, they
are the only nonzero \inv solutions of Eq.~\meqref{eq:nhybe} in $(B2)$. The corresponding Rota-Baxter operators on $(B2)$ are derived from $P_{r_i}, i=1, \cdots, 8$ by setting $P_{r_i}(e_3)=0$, as shown in~\mcite{AB}.

For the algebra $(B1)$,  among
the total of 73 nonzero solutions of the \nybe given in~\mcite{Liu},  there are exactly 48 nonzero solutions that are \inv. All of these solutions are obtained from Rota-Baxter
operators thanks to Theorem~\mref{cor:Frobenius}.

Note that the unit {\bf 1} is $e_1+e_2+e_3$ and
$${\rm Inv}(B1)={\rm
    span}\{e_1\otimes e_1,e_2\otimes e_2,e_3\otimes e_3\}.$$
Set
\begin{eqnarray*}
    &&\phi_1:=e_1\otimes e_1+e_2\otimes e_2+e_3\otimes e_3,\;\;\phi_2:=e_1\otimes e_1+e_2\otimes e_2-e_3\otimes e_3,\\
    &&\phi_3:=e_1\otimes e_1-e_2\otimes e_2+e_3\otimes
    e_3,\;\;\phi_4:=-e_1\otimes e_1+e_2\otimes e_2+e_3\otimes e_3.
\end{eqnarray*}

According to their \tsymms
$$\sr:=r+\sigma(r)-\mu (\bfone\ot \bfone),$$
these 48 solutions are grouped together as follows.
\begin{eqnarray*}
    r_1&=& \mu(e_2\otimes e_1+e_3\otimes e_1+e_3\otimes
    e_2),\;\;
    r_2= \mu(e_1\otimes e_2+e_1\otimes e_3+e_2\otimes
    e_3),\\
    r_3&=& \mu(e_2\otimes e_1+e_2\otimes e_3+e_3\otimes
    e_1),\;\;
    r_4= \mu(e_1\otimes e_2+e_3\otimes e_2+e_1\otimes
    e_3),\\
    r_5&=& \mu(e_1\otimes e_3+e_2\otimes e_1+e_2\otimes
    e_3),\;\;
    r_6= \mu(e_3\otimes e_1+e_1\otimes e_2+e_3\otimes
    e_2),
\end{eqnarray*}
for which $\sr=-\mu \phi_1$ and hence their corresponding linear operators in Theorem~\mref{cor:Frobenius} are Rota-Baxter operators of weight $\mu$. Furthermore,
$$r_{i}= r_{i-6}+\mu\phi_1, \quad  7\leq i\leq 12,
$$
for which $\sr=\mu \phi_1$ and hence correspond to Rota-Baxter operators of weight $-\mu$.
$$ r_i=r_{i-12}+\mu(e_3\ot e_3), \quad 13\leq i\leq 18,
$$
for which $\sr=-\mu \phi_2$ and hence correspond to Rota-Baxter operators of weight $\mu$.
$$ r_i=r_{r-18}+\mu(e_1\otimes e_1+e_2\otimes e_2), \quad 19\leq i\leq 24,
$$
for which $\sr=\mu \phi_2$ and hence correspond to Rota-Baxter operators of weight $-\mu$.
$$ r_i=r_{i-24}+\mu(e_2\otimes e_2), \quad 25\leq i\leq 30,
$$
for which $\sr=-\mu \phi_3$ and hence correspond to Rota-Baxter operators of weight $\mu$.
$$ r_i=r_{i-30}+\mu(e_1\otimes e_1+e_3\otimes e_3), \quad 31\leq i\leq 36,
$$
for which $\sr=\mu \phi_3$ and hence correspond to Rota-Baxter operators of weight $-\mu$.
$$ r_i=r_{i-36}+\mu(e_1\otimes e_1), \quad 36\leq i\leq 42,
$$
for which $\sr=-\mu \phi_4$ and hence correspond to Rota-Baxter operators of weight $\mu$.
$$ r_i=r_{i-42}+\mu(e_2\otimes e_2+e_3\otimes e_3), \quad 43\leq i\leq 48,$$
for which $\sr=\mu \phi_4$ and hence correspond to Rota-Baxter operators of weight $-\mu$.

\smallskip

 \noindent
 {\bf Acknowledgements.}  This work is supported by
 National Natural Science Foundation of China (Grant Nos. 11931009,  11771190).  C. Bai is also
supported by the Fundamental Research Funds for the Central
Universities and Nankai ZhiDe Foundation. Y. Zhang is supported by
China Scholarship Council to visit  University of Southern California and he thanks Professor Susan Montgomery for hospitality during his visit.

\vspace{-.3cm}

\end{document}